\title{Pl\"ucker formulas using equivariant cohomology of coincident root strata}
\author{L\'aszl\'o M. Feh\'er}
\address
{\parbox{\linewidth}{Department of Analysis, Institute of Mathematics, E\"otv\"os Lor\'and University,
\\
P\'azm\'any P\'eter s\'et\'any 1/C, Budapest, Hungary, H-1117
}}
\email{lfeher63@gmail.com}
\author{Andr\'as P. Juh\'asz}
\address
{\parbox{\linewidth}{
    Department of algebraic geometry and differential topology,
  Alfr\'ed R\'enyi Institute of Mathematics,
\\
Re\'altanoda utca 13-15,  Budapest, Hungary, H-1053}}
\email{juhasz.andris@gmail.com}
\keywords{polynomiality of Pl\"ucker formulas, coincident root loci, equivariant cohomology, varieties of tangent lines, number of lines on a hypersurface}
\subjclass[2020]{14N10, 55N91 }
\begin{document}
\begin{abstract}
We give a new method to calculate the universal cohomology classes of coincident root loci.
 We show a polynomial behavior of them and apply this result to prove that generalized Pl\"ucker formulas are polynomials in the degree, just as the classical Pl\"ucker formulas counting the bitangents and flexes of a degree $d$ generic plane curve.
 We establish an upper bound for the degrees of these polynomials, and we calculate the leading terms
 of those whose degrees reach this upper bound.

We believe that the paper is understandable without detailed knowledge of equivariant cohomology. It may serve as a demonstration of the use of equivariant cohomology in enumerative geometry through the examples of coincident root strata. We also explain how the equivariant method can be \quot{translated} into the traditional non-equivariant method of resolutions.
\end{abstract}
\maketitle
\tableofcontents

\section{Introduction}

This paper grew out from a project of the authors to study motivic invariants like Euler characteristics and $\chi_y$ genus of varieties of tangent lines. At one point we realized that even the \quot{classical} case of enumerating these tangent lines with tangencies of prescribed multiplicities is not fully covered in the literature. We consider the results presented below as a source of conjectures for motivic invariants.

We believe that the paper is understandable without detailed knowledge of equivariant cohomology. It may serve as a demonstration of the use of equivariant cohomology in enumerative geometry through the examples of coincident root strata. In Section \ref{sec:compare} we explain how the equivariant method can be \quot{translated} into the traditional non-equivariant method of resolutions.

We are grateful to Bal\'azs K\H om\H uves for sharing his knowledge and unpublished works with us. Conversations with Rich\'ard Rim\'anyi, Andr\'as N\'emethi, \'Akos Matszangosz and Andrzej Weber also helped and inspired our work. We thank Igor Dolgachev for teaching us about cyclic covers.

The first author was partially supported by NKFI 112703 and 112735 as well as ERC Advanced Grant LTDBud and enjoyed the hospitality of the R\'enyi Institute for a semester.

\subsection{Pl\"ucker numbers}

Let $f\in \Pol^d(\C^n)$ be a nonzero homogeneous polynomial of degree $d$ in $n$ variables. It defines a hypersurface $Z_f=(f=0)$ in $\P(\C^n)$. Let
\[ \lambda=(\lambda_1\geq\lambda_2\geq\cdots\geq\lambda_k)=(2^{e_2},\dots,m^{e_m})\] be a partition without $1$'s and $d\geq|\lambda|=\sum_{i=1}^k \lambda_i$. A line in $\P(\C^n)$ is called a tangent line of type $\lambda$ to $Z_f$ if it has $e_2$ ordinary tangent points, $e_3$ flex points, etc. A formal definition can be given the following way.  Projective lines $[W]$ in $\P(\C^n)$ correspond to affine planes $W^2$ in $\C^n$.
\begin{definition} The projective line $[W]$ is called a \emph{tangent line of type $\lambda$ to $Z_f$} (or
  \emph{$\lambda$-line} for short) if
  \[ f|_W=\prod_{i=1}^{k} \left( f_i^{\lambda_i} \right) \prod_{j=|\lambda|+1}^d \left( f_j\right), \]
where $f_i,f_j:W\to \C$ are linear, and no two of them are scalar multiples of each other.
\end{definition}
For a generic $f \in \Pol^{d}(\mathbb{C}^n)$ and a partition $\lambda$ as above, the set of tangent lines of type $\lambda$ to $Z_f$ is finite when $2(n-2)=\sum_{i=1}^{k}(\lambda_i-1)$.

 Indeed, let $f\in \Pol^d(\C^n)$ and
   \[ \mathcal{T}_\lambda Z_f:=\{ \text{tangent lines of type $\lambda$ to $Z_f$}\} \subset \gr_2(\C^n),\]
the \emph{variety of tangent lines of type $\lambda$ to $Z_f$}. Strictly speaking this is usually not a subvariety but a constructible set, but hopefully it will not cause any confusion that we will use the term subvariety in this broader sense. The dimension of the Grassmannian $\gr_2(\C^n)$---the space of projective lines in $\P(\C^n)$---is $2(n-2)$, and  simple dimension counting gives that the codimension of $\mathcal{T}_\lambda Z_f$ is $\sum_{i=1}^{k}(\lambda_i-1)$. For this reason we introduce the partition
\[\tilde{\lambda}:=(\lambda_1-1,\lambda_2-1,\dots,\lambda_k-1),\]
the \emph{reduction of $\lambda$}. Then
\[\codim\big( \mathcal{T}_\lambda Z_f \subset \gr_2(\C^n)\big)=|\tilde{\lambda}|.\]

This motivates the following.

\begin{definition} Let $\lambda=(2^{e_2},\dots,m^{e_m})$ be a partition without $1$'s such that $2(n_0-2)=|\tilde{\lambda}|$ for some $n_0$. Then the \emph{Pl\"ucker number} $\Pl_\lambda(d)$ for $d\geq|\lambda|$ is defined as the number of type $\lambda$ tangent lines to a generic degree $d$ hypersurface in $\P(\C^{n_0})$.
\end{definition}

\begin{example}   \label{pl22}    The Pl\"ucker numbers
  \[ \Pl_{2,2}(d)=\frac12d(d-2)(d-3)(d+3) \ \ \text{ and } \ \ \Pl_{3}(d)=3d(d-2)  \]
(the number of bitangent lines and flex lines to a generic degre $d$ plane curve)
were calculated by Pl\"ucker in the 1830's. His formulas also include the cases of singular curves, but we only study the generic case.

Note that, for typographical reasons, we omit the brackets from the indices.
\end{example}

 If the dimension of $\mathcal{T}_\lambda Z_f$ is  positive, we can obtain further numbers by adding linear conditions:

 \begin{definition} Let $\lambda=\left( 2^{e_2},\cdots,m^{e_m} \right)$ be a partition without 1's. Choose $n_0$ and $0 \leq i\leq |\tilde\lambda|$ such that $|\tilde\lambda|+i=2(n_0-2)$. We define the \emph{Pl\"ucker number} $\Pl_{\lambda;i}(d)$ for $d\geq|\lambda|$ as the number of $\lambda$-lines of a generic degree $d$ hypersurface in $\P(\C^{n_0})$ intersecting a generic $(i+1)$-codimensional projective subspace.
\end{definition}

For $\Pl_{\lambda;0}(d)$ we recover the previous definition: $\Pl_{\lambda;0}(d)=\Pl_{\lambda}(d)$. We will use both notations.

\begin{example} \label{pl2;1}
For tangent lines we  have
\[ \Pl_{2;1}(d)=d(d-1),\]
the number of lines in $\P(\C^3)$ through a given point and tangent to a generic degree $d$ curve. In other words, the degree of the dual curve is $d(d-1)$.
\end{example}

\begin{example} \label{pl22;2}
For bitangent lines we also have
\[ \Pl_{2,2;2}(d)=\frac{1}{2} d \left( d-1 \right)  \left( d-2 \right)  \left( d-3 \right),\]
 the number of bitangent lines of a generic degree $d$ surface in $\P(\C^4)$ going through a point.
\end{example}

\begin{example}
For $\lambda=(4)$  (the $4$-flexes) we also have two Pl\"ucker numbers:
\[ \Pl_{4;1}(d)=2d(3d-2)(d-3),\ \ \Pl_{4;3}(d)=d(d-1)(d-2)(d-3), \]
where $\Pl_{4;1}(d)$ is the number of 4-flex lines to a generic degree $d$ surface in $\P(\C^4)$ intersecting a line, and $\Pl_{4;3}(d)$ is the number of 4-flex lines to a generic degree $d$ hypersurface in $\P(\C^5)$ going through a point.
\end{example}

\begin{remark}\label{rmrk_ndi}
  Notice that $n_0$ doesn't appear in our notation:
  $\Pl_{\lambda;i}(d)$ is defined as a number of certain $\lambda$-lines in $\mathbb{P}(\mathbb{C}^{n_0})$
  for a specific $n_0$ that is determined by $\lambda$ and $i$ via $|\tilde\lambda|+i=2(n_0-2)$.
  (This also shows that the parity of admissible $i$'s is fixed: $i=|\tilde{\lambda}|,
  |\tilde{\lambda}|-2,\dots$, in particular, $\Pl_{\lambda;0}(d)=\Pl_{\lambda}(d)$ is defined
for a partition $\lambda$ only if $|\tilde \lambda|$ is even.)

  Moreover, $\Pl_{\lambda;i}(d)$ solves a family of enumerative problems: Choose an
  $n \geq n_0$. Elementary geometric considerations imply that
  $\Pl_{\lambda;i}(d)$ is the number of $\lambda$-lines of a generic degree $d$ hypersurface
  in $\mathbb{P}(\mathbb{C}^n)$ intersecting a generic $(n-n_0+i+1)$-codimensional projective subspace $A$
  and contained in a generic $(n_0-1)$-dimensional projective subspace $B$ such that $A \subset B$.
\end{remark}

\subsection{Calculating Pl\"ucker numbers} \label{sec:calc-of-pl}
The key observation is that for a given $\lambda$, all the Pl\"ucker numbers $\Pl_{\lambda;i}(d)$ are encoded in the cohomology class
\[ \left[\,\overline{\mathcal{T}_\lambda Z_f} \subset \gr_2(\C^n) \right]   \in H^*\big(\gr_2(\C^n)\big) \]
for any $n\geq |\tilde\lambda|+2$ and $f\in \Pol^d(\C^n)$ generic.

Let  $s_{k,l}$ for $l\leq k\leq n-2$ denote the Schur polynomials, $s_1=c_1,\ s_2=c_1^2-c_2,\ s_{1,1}=c_2,\ s_{2,1}=c_1c_2$, etc., where $c_1,\ c_2$ are the Chern classes of $S^\vee\to \gr_2(\C^n)$, the dual of the tautological rank two bundle over the Grassmannian $\gr_2(\C^n)$. Then $\{s_{k,l}:l\leq k\leq n-2\}$ is a basis of $H^*(\gr_2(\C^n))$ with dual basis $\{s_{n-2-l,n-2-k}:l\leq k\leq n-2\}$. The Schur polynomial $s_{k,l}$ is the cohomology class of the Schubert variety
\begin{equation*}\label{Schubert-variety}
  \sigma_{k,l}=\big\{W\in \gr_2(\C^n): \dim (W\cap F_{n-k-1})\geq 1 \text{ and } W\subset F_{n-l}\big\},
\end{equation*}
\noindent where $F_i\subset \C^n$ is the subspace spanned by the first $i$ coordinate vectors. This implies that the Schur coefficients of $\left[\,\overline{\mathcal{T}_\lambda Z_f} \subset \gr_2(\C^n) \right]$ are solutions of enumerative problems: Standard transversality argument implies that if
\[ \left[\,\overline{\mathcal{T}_\lambda Z_f} \subset \gr_2(\C^n) \right]=\sum a_{k,l}s_{k,l}, \]
then $a_{k,l}$ is the number of $\lambda$-lines in $\sigma_{n-2-l,n-2-k}$.
Setting $(k,l)=(|\tilde{\lambda}|-j,j)$, we see that being in $\sigma_{n-2-l,n-2-k}$ is equivalent to
the linear conditions of Remark \ref{rmrk_ndi} for $i=|\tilde{\lambda}|-2j$. This implies

\begin{proposition}  \label{T-and-pluecker} Let $\lambda=(2^{e_2},\dots,m^{e_m})$ be a partition without $1$'s, $n\geq |\tilde\lambda|+2$ and $f\in \Pol^d(\C^n)$ generic. Then
\begin{equation*}
  \left[\,\overline{\mathcal{T}_\lambda Z_f}\subset \gr_2(\C^n) \right] =\sum_{j=0}^{t}
                 \Pl_{\lambda;|\tilde\lambda|-2j}(d)    s_{|\tilde\lambda|-j,j},
\end{equation*}
where $t=\lfloor|\tilde\lambda|/2\rfloor$.
\end{proposition}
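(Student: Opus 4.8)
The plan is to assemble the statement from the two facts recorded just above it: that the Schur coefficients of $\left[\,\overline{\mathcal{T}_\lambda Z_f}\subset\gr_2(\C^n)\right]$ count $\lambda$-lines lying in suitable Schubert varieties, and that these Schubert conditions translate into the incidence conditions of Remark~\ref{rmrk_ndi}. First I would pin down the degree: by the codimension count preceding the definition of the Pl\"ucker numbers, the class sits in $H^{2|\tilde\lambda|}(\gr_2(\C^n))$, and a basis of that graded piece is $\{s_{k,l}:k+l=|\tilde\lambda|,\ 0\le l\le k\le n-2\}$. The hypothesis $n\ge|\tilde\lambda|+2$ makes the bound $k\le n-2$ automatic, since $k\le|\tilde\lambda|\le n-2$; the only remaining constraint is $0\le l\le\lfloor|\tilde\lambda|/2\rfloor=t$. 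Writing $j:=l$ and $k=|\tilde\lambda|-j$, we obtain $\left[\,\overline{\mathcal{T}_\lambda Z_f}\right]=\sum_{j=0}^{t}a_j\,s_{|\tilde\lambda|-j,j}$ for unique integers $a_j$, and it remains to show $a_j=\Pl_{\lambda;|\tilde\lambda|-2j}(d)$.

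Next I would extract $a_j$ by pairing against the dual basis. Since $s_{n-2-l,n-2-k}$ is Poincar\'e-dual to $s_{k,l}$, we have $a_j=\int_{\gr_2(\C^n)}\left[\,\overline{\mathcal{T}_\lambda Z_f}\right]\cdot\big[\sigma_{n-2-j,\,n-2-|\tilde\lambda|+j}\big]$, and the Schubert variety $\sigma_{n-2-j,\,n-2-|\tilde\lambda|+j}$ has codimension $2(n-2)-|\tilde\lambda|=\dim\overline{\mathcal{T}_\lambda Z_f}$. Here the transversality input (asserted as standard above) does its work: applying Kleiman's transversality theorem to the transitive $\mathrm{GL}_n$-action on $\gr_2(\C^n)$, a generic translate $g\cdot\sigma_{n-2-j,\,n-2-|\tilde\lambda|+j}$ meets $\overline{\mathcal{T}_\lambda Z_f}$ transversally in finitely many points; moreover, since the boundary $\overline{\mathcal{T}_\lambda Z_f}\setminus\mathcal{T}_\lambda Z_f$ has dimension strictly less than $\dim\overline{\mathcal{T}_\lambda Z_f}$, a generic translate of a Schubert variety of complementary codimension avoids it entirely. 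Hence every intersection point is a reduced point of the smooth open stratum, so $a_j$ equals the honest number of $\lambda$-lines of $Z_f$ contained in $g\cdot\sigma_{n-2-j,\,n-2-|\tilde\lambda|+j}$.

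It then remains to recognize this number as a Pl\"ucker number. Unwinding the definition of the Schubert variety gives $\sigma_{n-2-j,\,n-2-|\tilde\lambda|+j}=\{W:\dim(W\cap F_{j+1})\ge1,\ W\subset F_{|\tilde\lambda|+2-j}\}$, so a $\lambda$-line in a generic translate of it is precisely a $\lambda$-line meeting a generic $(n-1-j)$-codimensional projective subspace $A$ and contained in a generic $(|\tilde\lambda|+1-j)$-dimensional projective subspace $B$ with $A\subset B$. Setting $i:=|\tilde\lambda|-2j$ and $n_0:=|\tilde\lambda|-j+2$, one checks the compatibility relations $|\tilde\lambda|+i=2(n_0-2)$, $\dim B=n_0-1$, and $\codim A=n-n_0+i+1$; these are exactly the incidence conditions defining $\Pl_{\lambda;i}(d)$ in Remark~\ref{rmrk_ndi}. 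Therefore $a_j=\Pl_{\lambda;i}(d)=\Pl_{\lambda;|\tilde\lambda|-2j}(d)$, which is the claimed expansion.

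The main obstacle is the middle step: turning the equality of intersection numbers into a naive count of $\lambda$-lines. One must verify that a generic $\mathrm{GL}_n$-translate of the Schubert variety is transverse to the smooth open stratum $\mathcal{T}_\lambda Z_f$ and disjoint from its boundary, so that the intersection is reduced and supported on genuine $\lambda$-lines; one must also be a little careful that \quot{generic hypersurface} and \quot{generic Schubert datum} can be chosen compatibly, which is where the geometric reductions summarized in Remark~\ref{rmrk_ndi} (relating the count in $\P(\C^n)$ to the defining one in $\P(\C^{n_0})$) enter. Everything else is bookkeeping in the Schur basis.
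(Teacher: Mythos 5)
Your proposal is correct and takes essentially the same route as the paper: expand the class in the Schur basis in degree $2|\tilde\lambda|$, identify each coefficient---by the standard transversality argument---as the number of $\lambda$-lines lying in the dual Schubert variety $\sigma_{n-2-l,\,n-2-k}$, and translate those Schubert conditions into the incidence conditions of Remark~\ref{rmrk_ndi} with $i=|\tilde\lambda|-2j$, $n_0=|\tilde\lambda|-j+2$. The only difference is that you spell out (via Kleiman transversality, reducedness, and avoidance of the boundary) the details the paper compresses into the phrase ``standard transversality argument.''
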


Consequently, for a
given $\lambda=(2^{e_2},\dots,m^{e_m})$ calculating all the
Pl\"ucker numbers for $\lambda$ is equivalent to calculating the cohomology class
$\left[\,\overline{\mathcal{T}_\lambda Z_f}\subset \gr_2(\C^n) \right]$ for any $n \geq
|\tilde{\lambda}|+2$ and $f \in \Pol^{d}(\mathbb{C}^n)$ generic.

Note that---corresponding to Remark \ref{rmrk_ndi}---this cohomology class is stable in the sense that in the Schur basis it is independent of $n\geq |\tilde\lambda|+2$. This justifies omitting $n$ from the Schur coefficients of $\left[\,\overline{\mathcal{T}_\lambda Z_f}\subset \gr_2(\C^n) \right]$ and writing $a_{k,l}$.
Taking the formal limit $n\to\infty$, we obtain a unique polynomial in $\Z[c_1,c_2]$. The ring $\Z[c_1,c_2]$ can be identified with the $\GL(2)$-equivariant cohomology ring of the point $H^*_{\GL(2)}:=H^*(\bor\GL(2))$, and this polynomial is the equivariant cohomology class $[\, \overline{Y}_\lambda(d)]_{\GL(2)}$ of the coincidence root stratum $\overline{Y}_\lambda(d)$, what we will define in Section \ref{sec:coh}.

This is the reason why in this paper we first give an algorithm to calculate these equivariant cohomology classes, then we study the implications on the behaviour of the Pl\"ucker numbers.

\begin{remark} One can work with the Chow rings instead of cohomology. For the spaces appearing in the paper they are isomorphic. \end{remark}
\subsection{Results and structure of the paper}

The key result is that the Pl\"ucker numbers $\Pl_{\lambda;i}(d)$ are polynomials in $d$. This was our motivation for having $d$ as a variable in our notation. We also give several structural results on these polynomials.

In Section \ref{sec:coh} we explain how to calculate the Pl\"ucker numbers using the equivariant cohomology classes $\left[\,\overline Y_\lambda(d)\right]$ of the coincident root strata $\,\overline Y_\lambda(d)\subset\Pol^d(\C^2)$. This connection makes Theorem \ref{recursion4Y} the key technical result of the paper: we give an inductive formula for $\left[\,\overline Y_\lambda(d)\right]$, where the induction is on the length of the partition $\lambda$. Several formulas were already known for the  equivariant classes $\left[\,\overline Y_\lambda(d)\right]$, however those are less suited for our purposes. A detailed account of those formulas is given in Section \ref{sec:early}.

Section \ref{sec:proof} is devoted to the proof of Theorem \ref{recursion4Y}. Most of the paper is independent of this section, except parts of Sections \ref{sec:compare} and \ref{sec:further}.

In Section \ref{sec:poly} we show---using Theorem \ref{recursion4Y}---that the equivariant classes of $\,\overline Y_\lambda(d)$ are polynomials of degree $|\lambda|$ in $d$ (Theorem \ref{Y-poly}) and, consequently, the Pl\"ucker numbers $\Pl_{\lambda;i}(d)$ are polynomials of degree at most $|\lambda|$ in $d$ (Theorem \ref{pl-poly}).
Furthermore, in Theorem \ref{fundclass-leading} we calculate the leading term
(the $d$-degree $|\lambda|$ part) of $\left[\,\overline Y_\lambda(d)\right]$.
We also deduce a simple closed formula for a large class of Pl\"ucker numbers:
\medskip

\noindent\textbf{Theorem \ref{plpoint}.}
\emph{
Let $\lambda=\left( 2^{e_2},\cdots,m^{e_m} \right)$ be a partition without 1's. Then
\[\Pl_{\lambda;|\tilde{\lambda}|}=
\coef\!\big(s_{|\tilde\lambda|},\left[\,\overline{Y_\lambda}(d)\right]\big)=
\frac1{\prod_{i=2}^m \left( e_i! \right)}d(d-1)\cdots(d-|\lambda|+1).\]
In other words, for $n = |\tilde{\lambda}|+2$ we calculated the number of
$\lambda$-lines for a generic degree $d$
hypersurface in $\mathbb{P}(\mathbb{C}^{n})$ through a generic point of
$\mathbb{P}(\mathbb{C}^n)$.
}
\medskip

\noindent
Finally, we state Theorem \ref{pl-la-drop2} that tells us the $d$-degrees of all
the Pl\"ucker numbers.

In Section \ref{sec:m-flexes} we restrict our attention to $\lambda=(m)$ and
give closed formulas for the Pl\"ucker numbers $\Pl_{m;i}(d)$.
As a consequence we get a closed formula for a classical problem:
\medskip

\noindent\textbf{Theorem \ref{hyperflex-number}.}
\emph{A generic degree $d=2n-3$ hypersurface in $\P(\C^n)$ possesses
  \[ \sum_{u=1}^{n-1}(-1)^{u+n+1}\stir du\binom{d-u+1}{n-1}d^u\]
  lines which intersect the hypersurface in a single point. Here $\stir du$ denotes the  Stirling number of the first kind.}
\medskip

\noindent
For $n=3$ it says that a generic cubic plane curve has 9 flexes. For $n=4$ we obtain the classical result that a generic quintic has 575 lines which intersect the hypersurface in a single point.\\
In Section \ref{subsec_linesonhypersurface} we connect Pl\"ucker numbers $\Pl_{d;i}(d)$ to enumerative problems
regarding the number of lines on degree $d$ hypersurfaces. In particular, we give a new proof of
Don Zagier's formula (\cite{grunberg-moree-zagier}) on the number of lines on a degree $d=2n-3$ hypersurface
in $\P(\C^{n+1})$. This connection also implies that
\medskip

\noindent\textbf{Theorem \ref{lines-on-vs-hyperflexes}.}
\emph{The number of lines on a generic degree $d=2n-3$ hypersurface in $\P(\C^{n+1})$ is $d$ times the number of hyperflexes of a generic degree $d$ hypersurface in $\P(\C^{n})$.}
\medskip

\noindent
We expected this to be a classical result, but found no mention of it in the literature.

In Section \ref{sec:poly-of-Pl} we calculate the coefficient of $d^{|\lambda|}$ in $\Pl_{\lambda;i}(d)$ by relating it to certain Kostka---and for special $\lambda$'s
to Catalan and Riordan---numbers.
This coefficient informs us about the asymptotic behaviour of the  Pl\"ucker number $\Pl_{\lambda;i}(d)$ as $d$ tends to infinity, so we will call it the \emph{asymptotic  Pl\"ucker number} $\apl_{\lambda;i}$.
The main theorem of the section is
\medskip

\noindent\textbf{Theorem \ref{thm:apl}.}
\emph{Let $\lambda=(2^{e_2},\dots,m^{e_m})$ be a partition without $1$'s and $j\leq \lfloor |\tilde{\lambda}|/2 \rfloor$ a nonnegative
 integer. Let $n=|\tilde{\lambda}|-j+2$.Then}
 \[ \apl_{\lambda;|\tilde{\lambda}|-2j}= \frac{K_{(n-2,j),\tilde{\lambda}}}{\prod_{i=2}^m \left( e_i! \right)}, \]
where $K_{\mu,\nu}$ denotes the Kostka numbers.
\medskip

In Section \ref{sec:compare}, using the example of $m$-flex lines, we compare our method with the classical non-equivariant approach.
We introduce the notion of incidence varieties and we use them to formulate a
non-equivariant version of Theorem \ref{recursion4Y}.
We try to convince the readers who are not familiar with the equivariant method, that it is a useful language which can be translated to classical terms.

In Section \ref{sec:further} we study variants of the Pl\"ucker numbers. In Section \ref{sec:correspondence} we show how a substitution into the cohomology class $\left[\,\overline Y_\lambda(d)\right]$ calculates Pl\"ucker numbers for linear sytems of hypersurfaces. A small example is the number of flex lines through a point to a pencil of degree $d$ curves. In Section \ref{sec:mflexes-of-lambdalines} we study the variety of $m$-flex points of $\lambda$-lines. To demonstrate the versatility of the method we give the details for
computing the degree of the curve of flex points of the $(3,2)$-lines to a surface. In Section \ref{sec:combined} we show that the previous two constructions can be combined without difficulty.
As an example we calculate the degree of the curve of tangent points of bitangent lines to a pencil of degree $d$ plane curves.

In Appendix \ref{sec:trans} we collect the transversality results needed.

\section{Cohomological calculations} \label{sec:coh}
\subsection{The variety of tangent lines as a coincident root locus} \label{sec:variety-of-tangent}

In this section we identify $\mathcal{T}_\lambda Z_f$ as a coincident root locus of a certain section of a vector bundle.
\bigskip

The vector space
\[\Pol^d(\C^2):=\{\mbox{homogeneous polynomials of degree $d$ in two variables}\}\]
admits a stratification into the so-called coincident root strata (CRS).

\begin{definition} Let $\lambda=(2^{e_2},\dots,m^{e_m})$ be a partition without $1$'s and $d\geq |\lambda|$. Then the \emph{coincident root stratum} of $\lambda$ is
  \[   Y_\lambda(d)  :=  \left\{ f\in \Pol^d(\C^2) :  f=\prod_{i=1}^{k} \left( f_i^{\lambda_i} \right)
  \prod_{j=|\lambda|+1}^d \left(  f_j \right) \right\},    \]
where $f_i,f_j:\C^2 \to \C$ are nonzero, linear and no two of them are scalar multiples of each other.
\end{definition}

Here we slightly changed the usual notation, since we are interested in the $d$-dependence of these strata. If $d$ is clear from the context, we will also use the shorthand notation $Y_\lambda$.
Note that the above definition includes for all $d$'s the open stratum $Y_\emptyset(d)$ corresponding to the
empty partition $\emptyset$.
The strata $Y_\lambda(d)$ together with $\{0\}$ gives a stratification of $\Pol^d(\C^2)$.

Their key property is that they are invariant for the $\GL(2)$-action on
$\Pol^d(\C^2)\cong \Sym^d\left({\mathbb{C}^2}^\vee \right)$ coming from the standard
representation of $\GL(2)$ on $\mathbb{C}^2$.
It is intuitively clear that the codimension of $Y_\lambda(d)$ in $\Pol^d(\C^2)$ is $|\tilde{\lambda}|=\sum_{i=1}^{k}(\lambda_i-1)=\sum_{j=2}^{m}(j-1)e_j$ since every increase of the multiplicity of a root by one increases the codimension by one. For details see e.g.  \cite{fnr-root}.

\bigskip

We will study the $Y_\lambda(d)$-points of  vector bundles $\pol^d(D)$ for various rank two vector bundles $D$. For this reason, we introduce the notation:
\begin{definition} Let the Lie group $G$ act on the vector space $V$, and let $E=P\times_GV$ be a vector bundle associated to the principal $G$-bundle $P$. Then for any $G$-invariant subvariety
  $Y \subset V$ let
\[ Y(E):=P\times_GY\subset E.\]
\end{definition}

We apply this construction to $S\to \gr_2(\C^n)$, the tautological rank  two bundle over the Grassmannian. Given a nonzero homogeneous polynomial $f\in \Pol^d(\C^n)$ we can define a section $\sigma_f(W):=f|_W$ of the vector bundle $\Pol^d(S) \to \gr_2(\C^n)$. Then, by definition,
\begin{equation*}\label{var-of-tang-as-locus}
  \mathcal{T}_\lambda Z_f=\sigma_f^{-1}\big(Y_\lambda(d)(\Pol^d(S))\big).
\end{equation*}

We will also use the shorthand notation $Y_\lambda(\Pol^d(S))$ for $Y_\lambda(d)(\Pol^d(S))$.

\subsection{Calculating the cohomology class}\label{subsec:calccohomclass}
\subsubsection{Equivariant cohomology classes of invariant subvarieties} \label{sec:equi-class-def}
Suppose that the algebraic Lie group $G$ acts on an algebraic manifold $M$ and $Y\subset M$ is a $k$-codimensional $G$-invariant subvariety. Then we can define the \emph{$G$-equivariant cohomology class of $Y$}:
\[  [Y\subset M]_G\in H^{2k}_G(M).\]
This class was defined by several people independently and by quite different methods. Our approach is the closest to \cite{totaro}. In this paper $G$ is always the product of general linear groups, in which case
the construction of the  $G$-equivariant cohomology class is simpler:

Suppose that $G=\GL(r)$. We define an \emph{approximation of the universal bundle} $E\GL(r)\to \B\GL(r)$ as $P\to \gr_r(\C^N)$, where $P$ is the frame bundle of the tautological bundle of the Grassmannian $\gr_r(\C^N)$. Then $B:=P\times_{\GL(r)}M$ approximates the Borel construction $\B_{\GL(r)}M$ in the sense that the map $\beta: B \to \B_{\GL(r)}M$---induced by the classifying map of $P$---induces an isomorphism $\beta^*:H^{2k}_{\GL(r)}(M)=H^{2k}(\B_{\GL(r)}M) \to H^{2k}(B)$
for every sufficiently large $N$. Therefore, we can define
  \begin{equation}\label{equi-class-def}
  [Y\subset M]_{\GL(r)}:=(\beta^*)^{-1}[P\times_{\GL(r)}Y\subset P\times_{\GL(r)}M],
  \end{equation}
and it is not difficult to see that this definition is independent of the choice of $N$ big enough.
For products of $\GL(r_i)$'s we can use the products of the approximations.

If it is clear from the context, we drop the group $G$ or the ambient space from our notation and write
$\left[ Y \right]$ or $\left[ Y \right]_G$ for $\left[ Y \subset M \right]_G$. Similarly, we sometimes drop
the group from equivariant characteristic classes of $G$-bundles $E \to B$ and write $c_i(E)$ for
$c_i^G(E)$ and $e(E)$ for $e_G(E)$.

\bigskip

A fundamental observation in equivariant cohomology is that equivariant cohomology classes of invariant subvarieties are universal polynomials: for example for the $\GL(2)$-invariant subvariety $\overline Y_\lambda(d)$
and a rank two vector bundle $D$
 we have
  \begin{equation*}\label{equi}
    \left[\,\overline Y_\lambda(\Pol^d(D)) \subset \Pol^d(D)\right]=\left[\,\overline Y_\lambda(d)
    \subset \Pol^d(\mathbb{C}^2) \right]_{\GL(2)}|_{c_i\mapsto c_i(D^\vee)}.
  \end{equation*}
For more details on the choice and interpretation of the generators $c_1,c_2$ see Section
\ref{sec:conventions}.

\medskip

For a generic homogeneous polynomial $f\in \Pol^d(\C^n)$ the section $\sigma_f$ is transversal to $\,\overline Y_\lambda(\Pol^d(S))$, see Section \ref{alg-bertini}, implying that
  \[  [\sigma_f^{-1}\big(\,\overline Y_\lambda(\Pol^d(S))\big) \subset \gr_2(\C^n)]=\sigma_f^*\left[\,\overline Y_\lambda(\Pol^d(S)) \subset \Pol^d(S)\right], \]
  where the pullback $\sigma_f^*:H^*(\Pol^d(S))\to H^*(\gr_2(\C^n))$ is an isomorphism. This isomorphism  is independent of $f$, so we will not denote it in our formulas.
\begin{corollary} \label{cor:equi2nonequi}
The cohomology class $\left[\,\overline{\mathcal{T}_\lambda Z_f}\subset \Gr_2(\mathbb{C}^n)\,\right]$ is obtained from the equivariant class $\left[\,\overline Y_\lambda(d)\right]_{\GL(2)}\in \Z[c_1,c_2]$ by substituting $c_i(S^\vee)$ into $c_i$ for $i=1,2$.
\end{corollary}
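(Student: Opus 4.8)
The plan is to assemble the corollary from pieces already in place: the scheme-theoretic identification $\mathcal{T}_\lambda Z_f=\sigma_f^{-1}\big(Y_\lambda(\Pol^d(S))\big)$, the transversality of $\sigma_f$ proved in Section \ref{alg-bertini}, the universal polynomiality of equivariant classes recalled above, and the fact that a vector bundle projection is a homotopy equivalence. First I would pass to closures. Since $f$ is generic, $\sigma_f$ is transversal not only to $\overline Y_\lambda(\Pol^d(S))$ but to every coincident root stratum occurring in its boundary (this is the content of Appendix \ref{sec:trans}), so the preimage of the open part is dense in the preimage of the closure, no boundary stratum contributes an excess-dimensional component, and
\[ \overline{\mathcal{T}_\lambda Z_f}=\sigma_f^{-1}\big(\overline Y_\lambda(\Pol^d(S))\big)\subset \Gr_2(\mathbb{C}^n) \]
is a subvariety of the expected codimension $|\tilde\lambda|$.

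Next, transversality lets me pull back the cohomology class of a subvariety (the standard fact, as in \cite{totaro}, that a transversal preimage represents the pulled-back class):
\[ \big[\,\overline{\mathcal{T}_\lambda Z_f}\subset \Gr_2(\mathbb{C}^n)\,\big]=\sigma_f^*\big[\,\overline Y_\lambda(\Pol^d(S))\subset \Pol^d(S)\,\big]. \]
Here $\sigma_f^*\colon H^*\big(\Pol^d(S)\big)\to H^*\big(\Gr_2(\mathbb{C}^n)\big)$ is an isomorphism: the bundle projection $\pi\colon \Pol^d(S)\to \Gr_2(\mathbb{C}^n)$ is a homotopy equivalence (the zero section is a deformation retract), and $\sigma_f$ is a section of $\pi$, so $\sigma_f^*=(\pi^*)^{-1}$; in particular $\sigma_f^*$ is independent of $f$ and carries $\pi^*c_i(S^\vee)$ to $c_i(S^\vee)\in H^*\big(\Gr_2(\mathbb{C}^n)\big)$.

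Then I would invoke the universal polynomiality statement displayed above with the rank two bundle $D=S$,
\[ \big[\,\overline Y_\lambda(\Pol^d(S))\subset \Pol^d(S)\,\big]=\big[\,\overline Y_\lambda(d)\subset \Pol^d(\mathbb{C}^2)\,\big]_{\GL(2)}\big|_{c_i\mapsto \pi^*c_i(S^\vee)}, \]
where the $c_i(D^\vee)=c_i(S^\vee)$ on the right are pulled up from $\Gr_2(\mathbb{C}^n)$ to $\Pol^d(S)$ via $\pi$. Applying $\sigma_f^*=(\pi^*)^{-1}$ to both sides and using the two previous displays turns the right-hand side into $\big[\,\overline Y_\lambda(d)\big]_{\GL(2)}\big|_{c_i\mapsto c_i(S^\vee)}$, which is the assertion.

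The only non-formal input is the transversality of Section \ref{alg-bertini}/Appendix \ref{sec:trans} together with the standard transversal-intersection formula; the rest is bookkeeping with $\pi$. The point that needs care—and the reason the transversality results are collected separately—is that transversality must hold against the \emph{entire} stratification, not merely the open stratum of $\overline Y_\lambda$, so that $\sigma_f^{-1}\big(\overline Y_\lambda(\Pol^d(S))\big)$ really is $\overline{\mathcal{T}_\lambda Z_f}$ with no spurious higher-dimensional components.
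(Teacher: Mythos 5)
Your proposal is correct and follows essentially the same route as the paper: transversality of $\sigma_f$ to all strata (Appendix \ref{sec:trans}), the pullback formula with $\sigma_f^*=(\pi^*)^{-1}$, and the universal property of $\left[\,\overline Y_\lambda(d)\right]_{\GL(2)}$ applied to $D=S$. The only minor point is that your claim that the preimage of the open stratum is dense in $\sigma_f^{-1}\big(\,\overline Y_\lambda(\Pol^d(S))\big)$ uses Whitney regularity of the stratification; as the paper's remark notes, this is not strictly needed, since any discrepancy consists of lower-dimensional components and so does not affect the cohomology class.
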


\begin{remark} There is a subtle detail about the preimage of the closure. Our definition for transversality is that $\sigma_f$ has to be transversal to all strata $Y_\mu(\Pol^d(S))$. Since the coincident root stratification satisfies the Whitney regularity condition, transversality implies that
\[ \sigma_f^{-1}\big(\,\overline Y_\lambda(\Pol^d(S))\big)=\,\overline{\sigma_f^{-1}\big( Y_\lambda(\Pol^d(S))\big)}=\,\overline{\mathcal{T}_\lambda Z_f}.\]
However, usage of the Whitney property is not needed. Let $f:M\to N$ be an algebraic map of smooth varieties and assume that $f$ is transversal to the subvariety $X\subset N$, in the sense that it is transversal to some stratification $X=\coprod X_i$ with $X_0$ being the open stratum. Then it is possible that $\,\overline{f^{-1}(X_0)}$ is strictly smaller than ${f^{-1}(X)}$, but the difference is a union of components of smaller dimension, so the cohomology classes $\left[\,\overline{f^{-1}(X_0)}\right]$ and $[{f^{-1}(X)}]=f^*[X]$ agree.
\end{remark}

Now, we can rephrase Proposition \ref{T-and-pluecker}:
\begin{proposition}  \label{Y-and-pluecker} Let $\lambda=(2^{e_2},\dots,m^{e_m})$ be a partition without $1$'s. Then
\begin{equation*}
  \left[\,\overline Y_\lambda(d)\right] =\sum_{j=0}^{t}
                 \Pl_{\lambda;|\tilde\lambda|-2j}(d)    s_{|\tilde\lambda|-j,j},
\end{equation*}
where $t=\lfloor|\tilde\lambda|/2\rfloor$.
\end{proposition}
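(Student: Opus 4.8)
The plan is to deduce Proposition \ref{Y-and-pluecker} from the already-established Proposition \ref{T-and-pluecker} together with Corollary \ref{cor:equi2nonequi}, by tracking how the Schur basis of $H^*(\gr_2(\C^n))$ corresponds to the monomial basis of $\Z[c_1,c_2]$ under the stabilization $n\to\infty$. The point is that Proposition \ref{T-and-pluecker} is exactly the statement we want, but written for the non-equivariant class $\left[\,\overline{\mathcal{T}_\lambda Z_f}\subset \gr_2(\C^n)\right]$ rather than for the equivariant class $\left[\,\overline Y_\lambda(d)\right]_{\GL(2)}\in\Z[c_1,c_2]$; so the content of the proof is just the translation between these two objects.

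First I would recall that $\left[\,\overline Y_\lambda(d)\right]_{\GL(2)}$, viewed in $\Z[c_1,c_2]=H^*_{\GL(2)}$, can be written in the Schur basis $\{s_{k,l}:l\le k\}$ of $\Z[c_1,c_2]$ (the $s_{k,l}$ being the Schur polynomials in $c_1,c_2$, which form a $\Z$-basis of $\Z[c_1,c_2]$ indexed by partitions with at most two parts). Write $\left[\,\overline Y_\lambda(d)\right]=\sum_{l\le k}b_{k,l}\,s_{k,l}$ with $b_{k,l}\in\Z$. By Corollary \ref{cor:equi2nonequi}, substituting $c_i\mapsto c_i(S^\vee)$ sends this to $\left[\,\overline{\mathcal{T}_\lambda Z_f}\subset\gr_2(\C^n)\right]=\sum_{l\le k}b_{k,l}\,s_{k,l}(S^\vee)$, and for $n\ge|\tilde\lambda|+2$ the classes $s_{k,l}(S^\vee)$ with $l\le k\le n-2$ remain linearly independent in $H^*(\gr_2(\C^n))$ (they are the Schubert classes $\sigma_{k,l}$). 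Hence the coefficients $b_{k,l}$ are precisely the Schur coefficients $a_{k,l}$ of the non-equivariant class, so by Proposition \ref{T-and-pluecker} they vanish unless $k+l=|\tilde\lambda|$, in which case writing $k=|\tilde\lambda|-j$, $l=j$ with $0\le j\le t=\lfloor|\tilde\lambda|/2\rfloor$ we get $b_{|\tilde\lambda|-j,j}=\Pl_{\lambda;|\tilde\lambda|-2j}(d)$. This is exactly the claimed identity.

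I would also note the degree bookkeeping that makes this consistent: $\left[\,\overline Y_\lambda(d)\right]$ is homogeneous of cohomological degree $2\,\codim \overline Y_\lambda(d)=2|\tilde\lambda|$ in $\Z[c_1,c_2]$ (with $\deg c_i=2i$), so only Schur polynomials $s_{k,l}$ with $k+l=|\tilde\lambda|$ can appear, which already forces the sum to range over $j=0,\dots,t$; the transversality input from Section \ref{alg-bertini} (and the companion remark about closures) guarantees that $\sigma_f^{-1}(\overline Y_\lambda(\Pol^d(S)))=\overline{\mathcal{T}_\lambda Z_f}$ up to lower-dimensional components and hence that the two classes really do match. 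The main obstacle is not a difficulty so much as a matter of care: one must be sure that the stabilization statement is being invoked correctly — i.e. that the finitely many relevant Schur classes $s_{k,l}$ with $k+l=|\tilde\lambda|$ survive as linearly independent elements of $H^*(\gr_2(\C^n))$ once $n\ge|\tilde\lambda|+2$, which is why that bound appears in the hypothesis — and that the substitution map $c_i\mapsto c_i(S^\vee)$ genuinely sends the abstract Schur polynomial $s_{k,l}$ to the Schubert class $\sigma_{k,l}$, as recalled in Section \ref{sec:calc-of-pl}. Once these compatibilities are spelled out, the proposition is immediate.
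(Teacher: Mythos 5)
Your proposal is correct and follows essentially the same route as the paper: the paper presents Proposition \ref{Y-and-pluecker} as a direct rephrasing of Proposition \ref{T-and-pluecker} via Corollary \ref{cor:equi2nonequi}, relying on the stability discussion of Section \ref{sec:calc-of-pl} (the Schur classes $s_{k,l}$ with $l\le k\le n-2$ form a basis of $H^*(\gr_2(\C^n))$, so for $n\ge|\tilde\lambda|+2$ the Schur coefficients of the equivariant class in $\Z[c_1,c_2]$ coincide with those of $\left[\,\overline{\mathcal{T}_\lambda Z_f}\right]$). Your write-up merely makes explicit the degree and linear-independence bookkeeping that the paper leaves implicit.
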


This connection motivates our calculations of the equivariant classes $\left[\,\overline Y_\lambda(d)\right]$.

\begin{remark} As we mentioned in the introduction, we can avoid referring to equivariant cohomology here. First, observe that for the embedding $i:\gr_2(\C^n)\to \gr_2(\C^{n+1})$ the equality
\[
i^*\left[ \left( \,\overline{Y}_\lambda\left( \Pol^d(S) \right) \subset \Pol^{d}(S) \right)
\to \Gr_2\left( \mathbb{C}^{n+1} \right) \right]=
\left[ \left( \,\overline{Y}_\lambda\left( \Pol^d(S)\right) \subset \Pol^{d}(S) \right)
\to \Gr_2\left( \mathbb{C}^{n} \right) \right]
\]
holds. Also, notice that $H^*(\gr_2(\C^n))=\Z[c_1,c_2]/I_n$, where the degree of generators of $I_n$ tends to infinity with $n$. This implies the existence and uniqueness of a polynomial $\left[\,\overline Y_\lambda(d)\right]_{\GL(2)}\in \Z[c_1,c_2]$ with the property above.

For a general rank two vector bundle $D\to M$ over a projective algebraic manifold we can use the fact that any such bundle can be pulled back from $S\to \gr_2(\C^{n})$ for $n>>0$. This argument can be generalized to obtain a general definition of the $G$-equivariant cohomology class of a $G$-invariant subvariety of a vector space $V$, where $G$ is an algebraic group acting on $V$ (see e. g. \cite{totaro}).

\end{remark}

\subsection{A recursive formula for \texorpdfstring{$\left[\,\overline Y_\lambda(d)\right]$}{[Ylambda(d)]}}
The main result of this section is Theorem \ref{recursion4Y}, which gives an algorithm to calculate the universal cohomology classes $\left[\,\overline Y_\lambda(d)\right]$.

The class $\left[\,\overline Y_\lambda(d)\right]\in\Z[c_1,c_2]$ can be expressed in the \emph{Chern roots} $a$ and $b$: substituting $c_1\mapsto a+b$ and $c_2\mapsto ab$, we obtain a polynomial symmetric in the variables $a$ and $b$.

\begin{theorem}\label{recursion4Y} Let $\lambda=(2^{e_2},\dots,m^{e_m})$ be a partition without $1$'s and $d\geq |\lambda|$.
Let $\lambda'$ denote the partition $(2^{e_2},\dots,m^{e_m-1})$, where $e_m=1$ is allowed.  We also use the notation  $d'=d-m$. Then
\[ \left[\,\overline Y_\lambda(d)\right]=\frac{1}{e_m}\partial\Big(\left[\,\overline Y_{\lambda'}(d')\right]_{m/d'}\prod_{i=0}^{m-1}\big( ia+(d-i)b \big) \Big),\]
where for a polynomial $\alpha\in\Z[a,b]$ and $q\in \Q$ we use the notation
\[ \alpha_{q}(a,b)=\alpha(a+qa,b+qa)\]
and
  \[  \partial(\alpha)(a,b)=\frac{\alpha(a,b)-\alpha(b,a)}{b-a}\]
  denotes the divided difference operation.
\end{theorem}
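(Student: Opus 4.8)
The plan is to realize $\overline Y_\lambda(d)$ as the image (or the preimage under a resolution map) of a natural incidence variety that remembers one of the $m$-fold root lines, and then compute the pushforward of its equivariant fundamental class. Concretely, consider the space parametrizing pairs $(f,\ell)$ where $\ell\subset\mathbb C^2$ is a line (a point of $\mathbb P^1$) and $f\in\overline Y_\lambda(d)$ has $\ell$ as one of its $e_m$ roots of multiplicity $m$. Projecting to $\mathbb P^1$ exhibits this incidence variety as a vector subbundle over $\mathbb P(\mathbb C^2)=\mathbb P^1$: having $\ell$ as an $m$-fold root means $f$ lies in $\ell^m\cdot \overline Y_{\lambda'}(d')$ fiberwise, where after dividing out by $\ell^m$ the residual polynomial of degree $d'=d-m$ must lie in $\overline Y_{\lambda'}(d')$. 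Projecting instead to $\Pol^d(\mathbb C^2)$ is generically $e_m$-to-$1$ onto $\overline Y_\lambda(d)$, since a generic $f\in Y_\lambda(d)$ has exactly $e_m$ distinct $m$-fold roots; this is where the factor $1/e_m$ will come from.

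The first step is to set up this incidence variety $\GL(2)$-equivariantly and identify the relevant bundles. Over $\mathbb P^1=\Gr_1(\mathbb C^2)$ with tautological line bundle $L$, the subbundle $L^m\otimes \Pol^{d'}(\mathbb C^2)\hookrightarrow \Pol^d(\mathbb C^2)$ has as its total Chern class a product over the monomials of $\Pol^{d'}$; restricting further to $L^m\otimes\overline Y_{\lambda'}(d')$ gives a subvariety whose class relative to the ambient trivial-ish bundle is $[\overline Y_{\lambda'}(d')]$ with its Chern roots shifted by the class of $L$. Writing the equivariant Chern roots of $\mathbb C^2$ as $a,b$ and that of $L$ as $b$ (one of the two, by the splitting principle on $\mathbb P(\mathbb C^2)$), the substitution $c_i\mapsto c_i(D^\vee)$ convention forces the root of $L^\vee$ to appear; chasing through, multiplication by $L^m$ translates the Chern roots of the residual $\Pol^{d'}$ picture, and this is exactly the operation $\alpha\mapsto \alpha_{m/d'}$ (the shift $a\mapsto a+\tfrac{m}{d'}a$, $b\mapsto b+\tfrac{m}{d'}a$) after normalizing how the degree-$d$ and degree-$d'$ splitting conventions differ by a factor on one Chern root.

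The second step is the pushforward computation. The Euler class of the normal bundle of $L^m\otimes\Pol^{d'}\subset \Pol^d$ — equivalently the Chern class of the quotient, which is spanned by the monomials $x^i y^{d-i}$ for $i=0,\dots,m-1$ with $x$ an $\ell$-coordinate — is $\prod_{i=0}^{m-1}(ia+(d-i)b)$ once we assign the Chern roots correctly. So the pushforward to $H^*_{\GL(2)}(\Pol^d(\mathbb C^2))=\Z[a,b]^{S_2}$ of the class of the incidence variety is $\pi_*\big([\overline Y_{\lambda'}(d')]_{m/d'}\cdot\prod_{i=0}^{m-1}(ia+(d-i)b)\big)$, and $\pi_*\colon \Z[a,b]\to\Z[a,b]^{S_2}$ for the $\mathbb P^1$-bundle $\mathbb P(\mathbb C^2)$ is precisely the divided difference $\partial$. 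Combining with the $e_m$-to-$1$ degree of the other projection yields the stated formula. Throughout I would lean on the transversality/Bertini results promised in the appendix and on the Whitney-regularity remark to justify that closures, generic fibers, and fundamental classes behave as expected, and on Corollary \ref{cor:equi2nonequi} together with the stability remark to move freely between the equivariant class and its incarnation on finite Grassmannians.

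The main obstacle I anticipate is pinning down the exact form of the Chern-root shift $\alpha\mapsto\alpha_{m/d'}$ and making sure no spurious factor of $d'$ or $d$ is lost. The issue is bookkeeping: $\overline Y_{\lambda'}(d')$ naturally lives in $\Pol^{d'}$ whose $\GL(2)$-representation has its own weights, and embedding it via $\cdot\,\ell^m$ into $\Pol^d$ twists everything by $m$ copies of the line class; translating this twist into the $(a,b)$ variables used for $\Pol^d$, rather than those intrinsic to $\Pol^{d'}$, is what produces the nonstandard rational shift by $m/d'$ rather than an integer shift. I would verify the normalization on the smallest cases — e.g. $\lambda=(2)$, reproducing $[\overline Y_2(d)]$ and hence $\Pl_{2;1}(d)=d(d-1)$ from Example \ref{pl2;1} — before trusting the general bookkeeping. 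The pushforward-equals-$\partial$ identity and the $1/e_m$ multiplicity are comparatively routine once the setup is correct.
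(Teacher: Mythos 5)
Your proposal follows essentially the same route as the paper: the incidence space over $\P^1$ remembering one $m$-fold root, realized inside the twisted bundle $\Pol^{d'}(\C^2)\otimes\Pol^m(\C^2/\gamma)$, the Euler class $\prod_{i=0}^{m-1}\big(ia+(d-i)b\big)$ of the quotient, the pushforward along the $\P^1$-bundle computed as the divided difference $\partial$, and the factor $1/e_m$ coming from the generically $e_m$-to-one projection to $\Pol^d(\C^2)$. The one step you defer to ``bookkeeping'' --- that twisting by the line bundle shifts the Chern roots by $\frac{m}{d'}$ times the line class --- is precisely the paper's Proposition \ref{twisted} and Corollary \ref{cor:equivclass_of_twisted}, whose proof uses that $\overline Y_{\lambda'}(d')$ is a cone and that the $\GL(2)$-representation $\Pol^{d'}(\C^2)$ contains the scalars with weight $d'$ (a formal $d'$-th root argument), so it needs a short dedicated argument rather than only verification on small cases.
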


The notation $d'=d-m$ will be used throughout this paper.
\begin{remark}
For any given $d$ the class $\left[\,\overline Y_\lambda(d)\right]$ is in $\Z[c_1,c_2]$, which is not obvious from the recursion formula because of the divisions.
\end{remark}
\begin{example} For $\lambda=(m)$ we recover the formula of \cite[Ex. 3.7 (4)]{fnr-root}:
 \begin{equation}\label{Y_m}
   \left[\,\overline Y_m(d)\right]=\partial \left( \prod_{i=0}^{m-1}\big( ia+(d-i)b \big)  \right).
\end{equation}
For example,
 \begin{equation*}\label{Y2}
\begin{split}
 \left[\,\overline Y_2(d)\right]=&\partial \Big( \big(db\big)\big(a+(d-1)b\big)\Big)=d\frac{(ab+(d-1)b^2)-(ba+(d-1)a^2)}{b-a}\\
=& d(d-1)(a+b)=d(d-1)c_1=d(d-1)s_1
\end{split}
 \end{equation*}
 and
 \begin{equation*}\label{Y3}
   \begin{split}
  \left[\,\overline{Y}_3(d))\right] =&  d(d-1)(d-2)c_1^2-d(d-2)(d-4)c_2 \\
                           =&  d(d-2)(d-1)s_2+3d(d-2)s_{1,1}.
\end{split}
 \end{equation*}
\end{example}

\begin{example}
For $\lambda=\left( 2,2 \right)$, $m=2$, $\lambda'=\left( 2 \right)$ and $d'=d-2$.
Hence we have
\[ \left[\,\overline Y_2(d-2)\right]_{2/d-2}=(d-2)(d-3)\left(a+\frac{2}{d-2}a+b+\frac{2}{d-2}a\right)=(d-3)\big((d+2)a+(d-2)b\big), \]
implying that

\begin{equation}\label{Y22}
\begin{split}
\left[\,\overline Y_{2,2}(d)\right]&=\frac12 \partial\Big( (d-3)\big((d+2)a+(d-2)b\big)db\big(a+(d-1)b\big) \Big) \\
&=\frac12d(d-3)\partial\Big(b(a+(d-1)b)\big((d+2)a+(d-2)b\big)\Big) \\
&=\frac12d(d-3)(d-2)\Big((d-1)c_1^2+4c_2\Big) \\
&=\frac12d \left( d-1 \right)  \left( d-2 \right)  \left( d-3 \right) s_{{
2}}+\frac12d \left( d-2 \right)  \left( d-3 \right)  \left( d+3
 \right) s_{{1,1}},
\end{split}
\end{equation}
which is a calculation still manageable by hand. Notice that the result is in agreement with Examples \ref{pl22} and \ref{pl22;2}.

Also, notice that we obtained these results for all $d$'s at the same time, and the polynomial dependence is obvious. This is true for any partition $\lambda$, which will be proved in Section \ref{sec:poly}.
\end{example}

The recursion formula to calculate these polynomials is easy to implement for example in Maple, and it is fast: for $|\lambda|<40$ the results are immediate on a PC.
\subsection{Earlier formulas} \label{sec:early}
 Using Kleiman's theory of multiple point formulas (\cite{kleiman1977enumtheorysingularities,
kleiman1981multiplepoint_iteration,Kleiman1982multiplepoint_formaps}) Le Barz in \cite{lebarz-formules} and Colley  in \cite{colley1986contact} calculated examples of Pl\"ucker numbers.

Kirwan gave formulas for the $\SL(2)$-equivariant cohomology classes of coincident root strata in \cite{kirwan}. Notice that the $\SL(2)$-equivariant cohomology classes are obtained from the $\GL(2)$-equivariant ones by substituting zero into $c_1$, therefore they do not determine the corresponding Pl\"ucker numbers.
The first formula for the $\GL(2)$-equivariant cohomology classes $\left[\,\overline Y_\lambda(d)\right]$ was given in \cite{fnr-root}. 
 Soon after a different formula was calculated with different methods in \cite{balazs-tezis}. These formulas don't seem to be useful for proving polynomiality in $d$. In 2006 in his unpublished paper \cite{kazarian}  Kazarian deduced a formula in a form of a generating function from his theory of multisingularities of Morin maps based on Kleiman's theory of multiple point formulas. This formula shows the polynomial dependence but further properties doesn't seem to follow easily. He also calculates several Pl\"ucker numbers $\Pl_\lambda(d)$. The paper \cite{spink-tseng} of Spink and Tseng also develops a method to calculate the $\GL(2)$-equivariant cohomology classes $\left[\,\overline Y_\lambda(d)\right]$. One of their main goals is to establish relations between these classes.

\section{Proof of the recursion formula} \label{sec:proof}

The proof of Theorem \ref{recursion4Y} is based on the following fundamental property of the equivariant cohomology class:
\begin{lemma} \label{cover}
  Let $f:M\to N$ be a proper $G$-equivariant map of smooth varieties with $\tilde Y\subset M$. Suppose that $f|_{\tilde Y}$ is generically $k$-to-1 to its image $Y\subset N$. Then
  \[ [Y\subset N]=\frac{1}{k}f_![\tilde  Y\subset M].  \]
\end{lemma}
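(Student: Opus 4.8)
The plan is to reduce the statement to the well-known degree formula for proper maps in (co)homology, applied to the restriction $f|_{\tilde Y}\colon \tilde Y\to Y$, and then transport it to the ambient manifolds $M$ and $N$ via the pushforward $f_!$. First I would recall that for a $G$-invariant subvariety $Z\subset X$ of a smooth $G$-variety, the equivariant class $[Z\subset X]$ is by construction the image under the Gysin map $(\iota_Z)_!(1)$ of the fundamental class of (a resolution of) $Z$, where $\iota_Z\colon Z\hookrightarrow X$; more precisely, following the approximation picture of Section \ref{sec:equi-class-def}, one works on the finite-dimensional models $P\times_G X$ and $P\times_G Z$, so everything below takes place in ordinary cohomology of smooth quasi-projective varieties, and at the end one pulls back along $\beta^*$ as in \eqref{equi-class-def}.

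The key steps, in order: (1) Pass to the Borel approximations, so that $f$ becomes a proper map $\bar f\colon \bar M\to \bar N$ of smooth varieties and $\tilde Y, Y$ become honest subvarieties $\bar{\tilde Y}\subset \bar M$, $\bar Y\subset \bar N$ with $\bar f|_{\bar{\tilde Y}}$ generically $k$-to-$1$ onto $\bar Y$ (genericity of the cover is preserved since we only remove closed subsets). (2) Use functoriality of the Gysin pushforward for the commuting square formed by $\bar f$ and the inclusions, i.e. $\bar f_!\circ (\iota_{\bar{\tilde Y}})_! = (\iota_{\bar Y})_!\circ (\bar f|_{\bar{\tilde Y}})_!$, which holds because Gysin maps compose along proper maps. (3) Invoke the projection/degree formula: since $\bar f|_{\bar{\tilde Y}}$ is generically finite of degree $k$ onto $\bar Y$, we have $(\bar f|_{\bar{\tilde Y}})_!(1_{\bar{\tilde Y}}) = k\cdot 1_{\bar Y}$ as classes in $H^0$ of the (possibly singular) image—here one passes to a resolution $\widetilde{\bar{\tilde Y}}$ and to the normalization of $\bar Y$ if needed, using that generic finiteness of degree $k$ means the induced extension of function fields has degree $k$. (4) Combine: $[\bar Y\subset \bar N] = (\iota_{\bar Y})_!(1) = \tfrac1k (\iota_{\bar Y})_!\big((\bar f|_{\bar{\tilde Y}})_!(1)\big) = \tfrac1k \bar f_!\big((\iota_{\bar{\tilde Y}})_!(1)\big) = \tfrac1k \bar f_![\bar{\tilde Y}\subset \bar M]$. (5) Pull everything back under the classifying-space comparison to get the equivariant identity $[Y\subset N] = \tfrac1k f_![\tilde Y\subset M]$, using that $f_!$ commutes with $\beta^*$ since both are compatible with the classifying maps.

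The main obstacle is step (3) together with the singularity issues lurking in steps (1)–(2): $\tilde Y$ and $Y$ need not be smooth, so ``$[Z\subset X]$'' and the Gysin maps out of them must be interpreted via proper pushforward from a resolution (or via refined Gysin homomorphisms / Borel–Moore homology). Once one fixes a resolution $\rho\colon \widetilde{Z}\to Z$ and defines $[Z\subset X] := (\iota_Z\circ\rho)_!(1)$, one must check this is independent of the resolution and that the generic $k$-to-$1$ hypothesis indeed yields $(\bar f|_{\bar{\tilde Y}})_!(1) = k[\bar Y]$ in Borel–Moore homology of $\bar N$—this is the content of the classical ``degree of a generically finite map'' statement, for which I would cite a standard reference (e.g. Fulton's \emph{Intersection Theory}, the multiplicity/degree of a proper map, or the pushforward formula for cycles). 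With those facts in hand the rest is formal functoriality of $f_!$, so the proof is short; the care is entirely in setting up the pushforward for singular subvarieties and quoting the degree formula correctly.
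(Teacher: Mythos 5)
Your proof is correct in substance, but note that the paper itself does not prove Lemma \ref{cover}: it is stated as a fundamental, known property of equivariant classes and then applied, so there is no in-paper argument to compare yours with. Your route --- pass to the finite-dimensional Borel approximations of Section \ref{sec:equi-class-def} so that everything becomes a proper map of ordinary quasi-projective varieties, then combine compatibility of Gysin maps with the inclusions and the degree formula for a generically finite proper map, and finally transport back through $\beta^*$ as in \eqref{equi-class-def} --- is exactly the standard justification. One simplification worth making: you do not need resolutions or Gysin maps out of the singular varieties at all. If one defines $[Z\subset X]$ for a subvariety of a smooth $X$ as the Poincar\'e dual of its Borel--Moore fundamental class (equivalently, of its cycle class), then the entire content of your steps (2)--(4) is the single statement that proper pushforward of cycles satisfies $\bar f_*[\bar{\tilde Y}] = k\,[\bar Y]$ when $\bar f|_{\bar{\tilde Y}}$ is generically $k$-to-$1$ onto $\bar Y$ (Fulton, \emph{Intersection Theory}, proper pushforward), together with the fact that Poincar\'e duality on the smooth ambient spaces intertwines $\bar f_*$ on Borel--Moore homology with $\bar f_!$ on cohomology. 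This sidesteps checking independence of the chosen resolution and the delicate interpretation of $(\bar f|_{\bar{\tilde Y}})_!(1)$ in the cohomology of a singular image, which as written is the one place where your argument genuinely needs the Borel--Moore reformulation you flag. Your remaining observations --- that properness and generic $k$-to-$1$-ness persist on the associated bundles $P\times_G(-)$, and that $f_!$ commutes with the comparison isomorphism --- are correct and are the only equivariant inputs required.
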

We will apply Lemma \ref{cover} to the projection
\[ \pi: \P^1\times\Pol^{d}(\C^2)\to \Pol^{d}(\C^2) \]
and $Y=\overline{Y}_\lambda(d)$.

\begin{remark} \label{projective-cover}
To motivate the following construction of $\tilde Y$, let us look at a projective version: we construct an $e_m$-fold branched covering of $\P\overline{Y}_\lambda\subset \P\Pol^{d}(\C^2)\iso \P^d$. Consider the map
  \[ f:\P^1\times\P^{d'}\to \P^d,\]
where $f=\mu\circ\big(v\times\Id_{\P^{d'}}\big)$, $v:\P^1\to \P^m$ is the Veronese map and $\mu:\P^m\times\P^{d'}\to \P^d$ is the projectivization of the multiplication map $\pol^m(\C^2)\times \pol^{d'}(\C^2)\to \pol^d(\C^2).$ Then it is not difficult to see that $f|_{\P^1\times \P\overline{Y}_{\lambda'}}$ is generically $e_m$-to-1 to its image $\P\overline{Y}_\lambda(d)$.

Indeed, $\P^d$ can be identified with the space of unordered $d$-tuples of points of $\P^1$ with multiplicities. The map $f$ corresponds to adding an extra point with multiplicity $m$ to a $d'$-tuple of points, and a $d$-tuple of multiplicity $\lambda$ has $e_m$ preimages, depending on which point of multiplicity $m$ comes from the $\P^1$ factor.

To obtain our $\tilde Y$ we need to \quot{deprojectivize} this construction. It is possible to use this projective construction to prove the recursion formula, but the expressions for the equivariant cohomology rings and the pushforward maps are more complicated.
\end{remark}

\subsection{The construction of the covering space: twisting with a line bundle}\label{subsec:twistingwithlinebundle}
\begin{definition} \label{scalar-extension} For any representation $\rho:G \to \GL(V)$ of a Lie group $G$ on a vector space $V$, we define its \emph{scalar extension} $\tilde{\rho}: G \times \GL(1) \to \GL(V)$ as the tensor product of $\rho$ and $\id_{\GL(1)}$ with $V \otimes \mathbb{C} \cong V$ identified canonically.

Let $Y \subset V$ be a $\rho$-invariant subvariety, not necessarily closed. If $Y$ is a \emph{cone}, i.e. invariant for the scalar $\GL(1)$-action on $V$, then it is also $\tilde{\rho}$-invariant.

Now, if $A= P \times_\rho V \to M$ is a vector bundle associated to $P$ and $L \to M$ is any line bundle, then using its frame bundle $L^{\times}=\Inj(\mathbb{C},L)$, we can obtain $A\otimes L$ as a bundle associated to the principal $G \times \GL(1)$ bundle $P \times_M L^{\times}\to M$:
\[ A \otimes L = ( P \times_M L^{\times})\times_{\rho \otimes\id_{\GL(1)}} ( V \otimes \mathbb{C}) \cong ( P \times_M L^{\times})\times_{\tilde{\rho}} V. \]
This description allows us to define a subvariety of $A \otimes L$
\begin{equation*}
  \begin{split}
     Y(A \otimes L) := & \big(P \times_M L^{\times}\big) \times_{\tilde{\rho}} Y          \\
                     = & \left\{ e_m \otimes l_m: e_m \in (P \times_\rho Y)_m, l_m \in L_m \setminus \left\{ 0 \right\}, m \in M \right\}.
  \end{split}
\end{equation*}
\end{definition}

Our primary example is the tensor product
\[ E:=\Pol^{d'}(\mathbb{C}^2) \otimes \Pol^{m}(\mathbb{C}^2/\gamma) =
  \left( (\P^1 \times \GL(2))\times_{\P^1} \Pol^{m}(\mathbb{C}^2/\gamma)^{\times}\right)
\times_{\tilde{\rho}} \Pol^{d'}(\mathbb{C}^2) \]
---where $\gamma$ is the tautological line bundle over $\P(\mathbb{C}^2)$ and $\rho: \GL(2) \to
\Pol^{d'}(\mathbb{C}^2)$ is our usual representation---
and its subvariety
\[ \tilde Y:=  \overline{Y}_{\lambda'}(d')\left( \Pol^{d'}(\mathbb{C}^2) \otimes \Pol^{m}(\mathbb{C}^2/\gamma)\right) \subset E \]
given by the cone $\overline{Y}_{\lambda'}\left( d' \right) \subset \Pol^{d'}(\mathbb{C}^2)$.

We have an injective map
\[ j: E=\Pol^{d'}(\mathbb{C}^2) \otimes \Pol^{m}(\mathbb{C}^2/\gamma ) \hookrightarrow \P^1\times \Pol^{d}(\mathbb{C}^2)\]
induced by the multiplication of polynomials:
\[ j(f\otimes g)(v):=\big(V, f(v)\cdot g(v+V)\big), \]
 where $f\in \Pol^{d'}(\mathbb{C}^2)$ and $V<\mathbb{C}^2$ is the one-dimensional subspace such that $g\in \Pol^{m}(\mathbb{C}^2/V)$. Therefore, we consider $E$ and $\tilde Y$ to be subspaces of $ \P^1\times \Pol^{d}(\mathbb{C}^2)$.

 The projection $\pi:\P^1 \times\Pol^{d}(\C^2)\to \Pol^{d}(\C^2)$ restricted to $\tilde Y$ is generically $e_m$-to-1 to its image $\overline{Y}_\lambda(d)$, implying that
 \begin{equation}\label{eq:push1}
   \left[\, \overline{Y}_\lambda(d)\subset \Pol^{d}(\mathbb{C}^2) \right]=\frac{1}{e_m}\pi_!\Big[\tilde Y\subset \P^1\times\Pol^{d}(\mathbb{C}^2)\Big].
 \end{equation}

 Notice that all the maps above are $\GL(2)$-equivariant, so we consider all these cohomology classes and the pushforward equivariantly.

 \begin{remark}
   Notice that all these varieties admit compatible $\GL(1)$-actions induced by the scalar multiplication.
   Omitting the zero sections and factoring out by this $\GL(1)$-action, we recover the construction of Remark \ref{projective-cover}.
 \end{remark}

 An easy argument gives that
 \begin{lemma} \label{push-from-subbundle}
  Let $E\to M$ be a subbundle of the vector bundle $\hat{E}\to M$. Then the pushforward map induced by  the inclusion $j:E\to \hat{E}$ is given by
  \[ j_!z= z \cdot e(\hat{E}/E), \]
where we did not denote the  isomorphisms $j^*:H^*(\hat{E})\iso H^*(E)$ and $H^*(\hat{E})\iso H^*(M)$. Equivariant versions of the statement also hold.
\end{lemma}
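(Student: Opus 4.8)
The plan is to recognize $j\colon E\hookrightarrow\hat E$ as a closed embedding of smooth total spaces over $M$ and to apply the self-intersection formula; the only real content is the identification of the normal bundle of $E$ in $\hat E$. First I would record that $j$ commutes with the bundle projections $\pi_E\colon E\to M$ and $\pi_{\hat E}\colon\hat E\to M$, that is, $\pi_{\hat E}\circ j=\pi_E$. Since both projections are homotopy equivalences (retraction onto the zero section), $\pi_E^*$ and $\pi_{\hat E}^*$ are isomorphisms, hence so is $j^*\colon H^*(\hat E)\iso H^*(E)$, and these maps identify $H^*(\hat E)$, $H^*(E)$ and $H^*(M)$. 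This is exactly the identification suppressed in the statement, and it also shows $j_!$ is a map $H^*(M)\to H^*(M)$ of the form ``multiplication by something''.

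Next I would compute the normal bundle $N:=N_{E/\hat E}$. Picking any splitting $\hat E\iso E\oplus(\hat E/E)$ (for instance via a Hermitian metric on $\hat E$; different choices are homotopic, so they do not affect cohomology), the subbundle $E$ sits inside $\hat E$ fibrewise as $E_m\times\{0\}\subset E_m\times(\hat E/E)_m$. Therefore $N\iso\pi_E^*(\hat E/E)$; equivalently, $j$ is the pullback along $\pi_E$ of the zero-section embedding $M\hookrightarrow\hat E/E$.

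Now the self-intersection formula for the closed embedding $j$ gives $j^*j_!z=z\cdot e(N)=z\cdot\pi_E^*\,e(\hat E/E)$. Transporting this identity through the isomorphisms $H^*(\hat E)\iso H^*(E)\iso H^*(M)$ carries $\pi_E^*\,e(\hat E/E)$ to $e(\hat E/E)\in H^*(M)$ and yields $j_!z=z\cdot e(\hat E/E)$. Equivalently, one may observe that $j_!$ factors through the Thom isomorphism of $N$ and that restricting the Thom class back along $j$ produces precisely $e(N)$. For the equivariant statement the same argument runs verbatim after replacing $M$, $E$, $\hat E$ by the finite-dimensional approximations $P\times_G M$, $P\times_G E$, $P\times_G\hat E$ of Section \ref{sec:equi-class-def} and passing to the limit; the quotient bundle and its Euler class go over to their equivariant counterparts.

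I do not expect a serious obstacle: this is a standard application of the self-intersection formula. The only two points demanding a little care are getting the normal-bundle identification $N_{E/\hat E}\iso\pi_E^*(\hat E/E)$ right, and confirming that the Gysin map, the Thom isomorphism and the self-intersection formula are available in the precise form used here — which for the equivariant case is handled by the finite-dimensional approximation already in place.
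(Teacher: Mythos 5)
Your proof is correct. The paper does not actually supply an argument for this lemma (it is introduced with ``An easy argument gives that\dots''), and what you wrote is precisely the standard argument being alluded to: after choosing a splitting, $j$ becomes the zero-section embedding of the bundle $\pi_E^*(\hat E/E)\to E$, so the Gysin map is cup product with the Thom class, and restricting the Thom class along $j$ gives $e(\pi_E^*(\hat E/E))=\pi_E^*e(\hat E/E)$, which corresponds to $e(\hat E/E)$ under the suppressed isomorphisms; the equivariant version follows verbatim on the finite-dimensional Borel approximations, exactly as the paper sets them up in its definition of equivariant classes. No gaps.
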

The lemma implies that
 \begin{equation}\label{eq:product}
   \Big[\tilde Y\subset \P^1\times\Pol^{d}(\mathbb{C}^2)\Big]=e\left(\big(\P^1\times\Pol^{d}(\mathbb{C}^2)\big)/E\right)\cdot [\tilde Y\subset E].
 \end{equation}
 The key step in the proof of Theorem \ref{recursion4Y} is the calculation of $[\tilde Y\subset E]$, which we will do in the next sections.

\subsection{Conventions} \label{sec:conventions} To be able to make these calculations we need to fix generators of the cohomology rings involved.
Most calculations of the paper happen in $H^*(\B\GL(2))$, the $\GL(2)$-equivariant cohomology ring of the point.

Our goal is to obtain \quot{positive} expressions, so we choose $c_i=c_i(S^{\vee})$ as generators of $H^*(\B\GL(2))=\Z[c_1, c_2]$,
where $S=E\GL(2)\times_{\GL(2)}\C^2$ is the tautological rank two bundle over the infinite Grassmannian $\gr_2(\C^\infty) \simeq \B\GL(2)$. We will also use the \quot{Chern roots}: Let $\T$ denote the subgroup of diagonal matrices in $\GL(2)$. The complex torus $\T$ is isomorphic to $\GL(1)^2$. The inclusion $i:\T\to\GL(2)$ induces an injective homomorphism $\B i^*: H^*(\B\GL(2))\to H^*(\B\T)\iso\Z[a,b]$ with image the symmetric polynomials in the variables $a$ and $b$. Let $\pi_i:\T\to \GL(1)$ denote the projection to the $i$-th factor and $L_i:=E\T\times_{\pi_i}\C$ denote the tautological line bundles over the factors of $\B\T \simeq \P(\C^\infty)\times\P(\C^\infty)$. To be consistent with our first choice we use the notation $a:=c_1(L_1^\vee)$ and $b:=c_1(L_2^\vee)$, so $\B i^*(c_1)=a+b$ and $\B i^*(c_2)=ab$.

For equivariant cohomology we need to specify (left) group actions on the spaces we are interested in. Our convention is that the $\GL(2)$-action on $\C^2$ is the standard action. This induces a $\GL(2)$-action on $\Pol^d(\C^2)$ via $(gp)(v):=p(g^{-1}v)$. We obtain $\T$-actions by restriction. These choices imply that the $\T$-equivariant Chern class of $\C^2$ is $c^{\T}(\C^2)=(1-a)(1-b)$, i.e. the \emph{weights} of $\C^2$ are $-a$ and $-b$. Also, $c^{\T}\big(\Pol^d(\C^2)\big)=\prod_{i=0}^{d}\big(1+ia+(d-i)b\big)$, i.e. the weights of $\Pol^d(\C^2)$ are $db,a+(d-1)b,\dots,da$.

The standard action of $\GL(2)$ on $\C^2$ induces an action on $\P(\C^2)$. Its restriction to $\T$ has fixed points $\langle e_1\rangle$ and $\langle e_2\rangle$, where $\C^2 = \langle e_1,e_2 \rangle$.
We will need the equivariant Euler classes of the tangent spaces of these fixed points:
\begin{equation*}
  e^{\T}\left(T_{\langle e_1\rangle} \P(\C^2)\right)=e^{\T}\big(\Hom(\langle e_1\rangle,\langle e_2\rangle)\big)=(-b)-(-a)=a-b, \ \ \ \ \
e^{\T}\left(T_{\langle e_2\rangle} \P(\C^2)\right)=b-a.
\end{equation*}

With these choices the formulas are nicer. We pay the price in the proof of Theorem \ref{recursion4Y}, where the signs will change several times.

\subsection{The twisted class}
The results of this section are based on \cite[\S 6.]{fnr-forms}. A special case of the twisted class appeared earlier in \cite{harris-tu}  under the name of squaring principle.

As $\tilde{Y}=\overline{Y}_{\lambda'}(d') (\Pol^{d'}(\mathbb{C}^2) \otimes \Pol^{m}(\mathbb{C}^2/\gamma))$
can be defined as a bundle associated to a principal $\GL(2) \times
\GL(1)$-bundle using the $\tilde \rho$-action on $\overline{Y}_{\lambda'}(d')$,
we can---using the universal property of the equivariant class---compute its cohomology class
$[ \tilde Y \subset E ]$ from $\left[ \, \overline Y_{\lambda'}(d')\right]_{\GL(2) \times \GL(1)}$.

For our representation $\rho:\GL(2)\to \GL\big(\Pol^{d'}(\mathbb{C}^2)\big)$ and invariant subvariety $Y_{\lambda'}(d') \subset \Pol^{d'}(\mathbb{C}^2)$ it is
possible to calculate $[\,\overline Y_{\lambda'}(d')]_{\GL(2) \times \GL(1)}$ from $[ \,\overline Y_{\lambda'}(d')]_{\GL(2)}$.

\bigskip
More generally, let us say that a representation $\rho: G \to \GL(V)$ of a complex reductive group $G$ \emph{ contains the scalars} if there is a homomorphism $\varphi: \GL(1) \to G$ and a positive integer $d$ such that
\begin{equation}\label{eq_containsthescalars}
  \rho\left( \varphi(s) \right)=s^d \id_V .
 \end{equation}
 For such $\varphi$, $G$ has a maximal complex torus $\mathbb{T}^r \subset G$ with $\im(\varphi) \subset \mathbb{T}^r$ and
 $\varphi: \GL(1) \to \mathbb{T}^r\cong \GL(1)^r$ can be written as $\varphi(s)=\left( s^{w_1},\dots,s^{w_r} \right)$ for some integers $w_i$.
 For our representation $\Pol^{d'}(\mathbb{C}^2)$, we can choose $w_1=w_2=-1$ and $d=d'$.

In this paper we are only concerned with group actions of the general linear group $G=\GL(r)$,
in which case we can---without limiting generality---restrict the actions to a maximal complex torus
$i: \mathbb{T}^r\iso\GL(1)^r \hookrightarrow G$.  We choose $a_i:=c_1\big(E\mathbb{T}^r\times_{\pi_i}\C\big)$ as generators of $H^*(\B \mathbb{T}^r)$, where the homomorphism $\pi_i:\mathbb{T}^r\to \GL(1)$ is the projection to the $i$-th factor. This is the most common choice for generators. Compared with the conventions of Section \ref{sec:conventions}, we have $a=-a_1$ and $b=-a_2$.

By the splitting principle, the induced map
$i^*: H^*(\B\GL(r); \mathbb{Z}) \to H^*(\B \mathbb{T}^r; \mathbb{Z})$ is an isomorphism onto its image
$H^*(\B \mathbb{T}^r; \mathbb{Z})^{S_r} \cong \mathbb{Z}[a_1,\dots,a_r]^{S_r}$
such that $ i^* \left[ Y \right]_G=\left[ Y \right]_{\mathbb{T}}$ for any $G$-invariant subset $Y \subset V$.

For a general connected Lie group $G$---by the Borel theorem---the analogous isomorphism holds with rational coefficients onto $H^*(BT, \mathbb{Q})^{\mathcal{W}}$, where $T$ is a real maximal torus of $G$ and  $\mathcal{W}$ is the Weyl group of $G$. This implies that the results of this section can be easily generalized to connected Lie groups.
\bigskip

For the following discussion it will be convenient to keep track of not only the groups acting but the actions themselves. For this reason, the
$G$-equivariant class of a subvariety $Y \subset V$ invariant under the $G$-action $\rho:G\to \GL(V)$ will be denoted by $\left[ Y \right]_\rho$. 
\begin{proposition} \label{twisted} Suppose that the representation $\rho: \GL(r) \to \GL(V)$
  contains the scalars as above. If $Y\subset V$ is a $\rho$-invariant closed subvariety,
  then it is also invariant for the scalar extension $\tilde{\rho}$ (see Definition \ref{scalar-extension}), and
   \[   [Y]_{\tilde{\rho}}(a_1,\dots,a_r,x)=[Y]_\rho(a_1+\frac{w_1}{d}x,\dots,a_r+\frac{w_r}{d}x),\]
   where $\left[ Y \right]_\rho \in H^*_{\GL(r)} \cong\Z[a_1,\dots,a_r]^{S_r}$
   and $H^*_{\GL(1)}\iso\Z[x]$ such that $x=c_1\big(E\GL(1)\times_{1_{\GL(1)}}\C\big)$.
\end{proposition}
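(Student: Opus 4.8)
The plan is to derive Proposition~\ref{twisted} directly from the universal-polynomial property of the equivariant class together with the functoriality of equivariant cohomology under change of groups. The starting point is that $[Y]_{\tilde\rho}$ lives in $H^*_{\GL(r)\times\GL(1)}\cong \Z[a_1,\dots,a_r]^{S_r}\otimes\Z[x]$, and $[Y]_\rho$ lives in $\Z[a_1,\dots,a_r]^{S_r}$; both can be computed on any sufficiently large approximation of the respective classifying spaces as genuine cohomology classes of subvarieties. So it suffices to exhibit, for each choice of line bundle data, a map of approximation spaces carrying the $\tilde\rho$-situation to the $\rho$-situation and to read off what happens to the generators.

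First I would restrict everything to maximal tori, which is harmless by the splitting-principle paragraph preceding the statement: $[Y]_{\tilde\rho}$ and $[Y]_\rho$ are determined by their restrictions to $\mathbb{T}^r\times\GL(1)$ and $\mathbb{T}^r$ respectively. So the claim becomes a statement about the torus $\mathbb{T}^r\times\GL(1)$ acting on $V$ via $(t,s)\mapsto \rho(t)\cdot s^d\,\mathrm{id}_V$. Now the crucial observation is that this action factors through the homomorphism $\Phi:\mathbb{T}^r\times\GL(1)\to\mathbb{T}^r$ defined by $\Phi(t_1,\dots,t_r,s)=(t_1 s^{w_1},\dots,t_r s^{w_r})$: indeed $\rho(\Phi(t,s))=\rho(t)\rho(\varphi(s))=\rho(t)\cdot s^d\,\mathrm{id}_V$ by \eqref{eq_containsthescalars}, which is exactly $\tilde\rho(t,s)$. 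Thus $\tilde\rho = \rho\circ\Phi$ as representations of $\mathbb{T}^r\times\GL(1)$, and $Y$ being $\rho$-invariant is automatically $\tilde\rho$-invariant.

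The next step is to push this factorization through to classifying spaces. For a homomorphism of tori $\Phi$ the induced map $B\Phi: B(\mathbb{T}^r\times\GL(1))\to B\mathbb{T}^r$ satisfies, in terms of our generators, $(B\Phi)^*(a_i) = a_i + w_i x$ on the nose — this is just the statement that $\Phi^*$ sends the $i$-th weight to the $i$-th weight plus $w_i$ times the $\GL(1)$-weight, dualized; I would check this carefully against the sign conventions fixed in Section~\ref{sec:conventions} (recall $a=-a_1$, $b=-a_2$), since that is where an error is most likely to creep in. Because $\tilde\rho=\rho\circ\Phi$, the Borel construction for $\tilde\rho$ is pulled back along $B\Phi$ from that of $\rho$, and the subvariety $Y$ (together with its approximation) is pulled back as well. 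Naturality of the equivariant class under pullback then gives $[Y]_{\tilde\rho} = (B\Phi)^*[Y]_\rho$, and evaluating $(B\Phi)^*$ on generators yields
\[ [Y]_{\tilde\rho}(a_1,\dots,a_r,x) = [Y]_\rho(a_1+w_1 x,\dots,a_r+w_r x). \]
Finally I would reconcile this with the stated formula, which has $\frac{w_i}{d}x$ rather than $w_i x$: this is a matter of rescaling the generator of $H^*_{\GL(1)}$. In the application of Section~\ref{subsec:twistingwithlinebundle} the relevant line bundle is $\Pol^m(\C^2/\gamma)$, whose frame bundle carries the $\GL(1)$-action with weight $d=d'$ built in; writing $x$ for $c_1$ of the dual of the "$d$-th root" line bundle — equivalently, replacing $x$ by $x/d$ in the torus computation — produces exactly the factors $a_i+\frac{w_i}{d}x$. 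I would spell out this normalization explicitly so the reader sees it is a bookkeeping choice matching the one used when $E=\Pol^{d'}(\C^2)\otimes\Pol^m(\C^2/\gamma)$ is formed.

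The main obstacle I anticipate is not conceptual but a matter of getting all the signs and normalizations consistent: the paper uses $c_i(S^\vee)$ as generators (so $a=-a_1$), twists by the \emph{dual} tautological line bundle in places, and the scalar extension is defined with a specific convention ($V\otimes\C\cong V$, weights $w_1=w_2=-1$ for $\Pol^{d'}$). Threading the homomorphism $\Phi$ through all of these so that the final formula comes out with a $+$ sign and a factor $1/d$ rather than, say, a $-$ sign, is where the real care is needed. Everything else — the reduction to tori, the factorization $\tilde\rho=\rho\circ\Phi$, naturality of the equivariant class — is formal once stated in the right order.
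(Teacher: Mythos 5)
There is a genuine gap, and it sits at the heart of your argument: the claimed factorization $\tilde\rho=\rho\circ\Phi$ is false. By Definition \ref{scalar-extension} the scalar extension acts by $\tilde\rho(t,s)=s\cdot\rho(t)$, i.e.\ the $\GL(1)$-factor enters with weight $1$, whereas your computation gives $\rho(\Phi(t,s))=\rho(t)\rho(\varphi(s))=s^{d}\rho(t)$, so $\rho\circ\Phi$ agrees with $(t,s)\mapsto\tilde\rho(t,s^{d})$, not with $\tilde\rho$. A minimal example: let $\rho$ be the squaring representation of $\GL(1)$ on $\C$, so $d=2$, $w_1=1$, and take $Y=\{0\}$; then $[Y]_{\tilde\rho}=2a_1+x$ while your route yields $[Y]_\rho(a_1+w_1x)=2a_1+2x$. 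In general no homomorphism $\T^r\times\GL(1)\to\T^r$ realizes $\tilde\rho$, since it would have to involve $d$-th roots of the scalar; this is exactly why the paper instead precomposes with the $d$-th power isogenies $\psi(t,s)=(t_1^d,\dots,t_r^d,s^d)$ and $\sigma(t,s)=(s^{w_1}t_1^d,\dots,s^{w_r}t_r^d)$, for which $\rho\circ\sigma=\tilde\rho\circ\psi$ genuinely holds, obtains $[Y]_{\tilde\rho}(da_1,\dots,da_r,dx)=[Y]_\rho(da_1+w_1x,\dots,da_r+w_rx)$ by functoriality, and only then divides by $d^{c}$ using homogeneity of both classes.

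Your attempted repair at the end --- recovering the factor $\frac{w_i}{d}$ by ``rescaling the generator of $H^*_{\GL(1)}$'' or by appealing to the weight built into $\Pol^m(\C^2/\gamma)$ in the application --- does not work as a proof of the proposition. The generator $x=c_1\big(E\GL(1)\times_{1_{\GL(1)}}\C\big)$ is fixed in the statement, so it cannot be renormalized, and the $1/d$ is not a bookkeeping artifact of Section \ref{subsec:twistingwithlinebundle}: it must be produced inside the proof, and the paper's mechanism for producing it is precisely the pullback along the $d$-th power maps followed by the division by $d^{c}$ (which is also why the identity is a priori only an identity of polynomials with rational coefficients). The reduction to maximal tori, the invariance of $Y$ under $\tilde\rho$, and the use of functoriality in the group variable are all fine and match the paper; the missing idea is the isogeny trick that substitutes for the nonexistent factorization.
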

\begin{proof} We can restrict the $\GL(r)$-action to the maximal torus $\T^r$ without losing information. We use the same notation $\rho$ for the restriction to $\T^r$.
\smallskip

Let $\sigma:\T^r\times\GL(1)\to \T^r$ and $\psi:\T^r\times\GL(1)\to \T^r\times\GL(1)$ denote the homomorphisms
  \[ \sigma(t_1,\dots,t_r,s)  =  \varphi(s)\cdot(t_1^d,\dots,t_r^d)  =  (s^{w_1}t_1^d,\dots,s^{w_r}t_r^d)  \]
  and
  \[ \psi(t_1,\dots,t_r,s)  =  (t_1^d,\dots,t_r^d,s^d).  \]
Then \eqref{eq_containsthescalars} and the definition of $\tilde{\rho}$ imply that
$\rho\circ \sigma=\tilde{\rho}\circ \psi$. Equivariant cohomology is functorial in the $G$ variable.
This means that for any $\rho$-invariant subvariety $Y\subset V$ we have
 $\psi^*[Y]_{\tilde{\rho}}=\sigma^*[Y]_{\rho}$.\\
Since $\sigma^*(a_i)=d a_i + w_i x$, $\psi^*(a_i)=d a_i$ and $\psi^*(x)=d x$
with $a_i, x$ chosen as above, for $\left[ Y \right]_{\tilde{\rho}} \in
H^*_{ \T^r\times \GL(1)} \cong \mathbb{Z}\left[ a_1,\dots,a_r,x \right]$ we have
  \[ [Y]_{\tilde{\rho}}(da_1,\dots,da_r,dx)  =  [Y]_{\rho}(da_1+w_1x,\dots,da_r+w_rx).\]
  Since $[Y]_{\tilde{\rho}}$ and $[Y]_{\rho}$ are homogeneous polynomials of the same degree---the codimension $c$ of $Y \subset V$---, we can divide by $d^c$,
which implies the proposition.
\end{proof}

The universal property of the equivariant class $\left[ Y \right]_{\tilde{\rho}}$ immediately implies
\begin{corollary}\label{twist4bundle}
  Let $A=P \times_\rho V \to M$ for some principal $\GL(r)$-bundle $P \to M$. Suppose that the representation $\rho: \GL(r) \to \GL(V)$ contains the scalars as above and that
$Y \subset V$ is a $\rho$-invariant closed subvariety. Let $L \to M$ be a line bundle. Then for
the subvariety $Y(A \otimes L)$ defined in Definition \ref{scalar-extension} we have
\[ [ Y(A \otimes L) \subset A \otimes L ]=[ Y ]_\rho( \alpha_1+\frac{w_1}{d}\xi,\dots,\alpha_r+\frac{w_r}{d}\xi ),\]
where $[ Y ]_\rho \in H^*_{\GL(r)} \cong  \mathbb{Z}[ a_1,\dots,a_r]^{S_r}$, $\alpha_1,\dots,\alpha_r$ and
$\xi$ are
the Chern roots of $P$ and $L$.
\end{corollary}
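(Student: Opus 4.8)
The plan is to reduce the statement, via the description of $A\otimes L$ as an associated bundle given in Definition~\ref{scalar-extension}, to a formal substitution into the equivariant class followed by an application of Proposition~\ref{twisted}. Recall from that definition that $A\otimes L=(P\times_M L^{\times})\times_{\tilde\rho}V$, so $A\otimes L$ is the vector bundle associated to the principal $\GL(r)\times\GL(1)$-bundle $P\times_M L^{\times}\to M$ through the scalar extension $\tilde\rho$, and that under this identification $Y(A\otimes L)$ is precisely the associated subvariety bundle $(P\times_M L^{\times})\times_{\tilde\rho}Y$. Here $Y$ is $\tilde\rho$-invariant: since $\rho$ contains the scalars we have $\rho(\varphi(s))=s^d\id_V$, and because $s\mapsto s^d$ is surjective on $\GL(1)$ any closed $\rho$-invariant $Y$ is automatically a cone, hence invariant under the extra scaling factor of $\tilde\rho$.

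First I would invoke the universal property of the equivariant class --- the same principle used in Section~\ref{subsec:calccohomclass} to express $[\,\overline{Y}_\lambda(\Pol^d(D))\subset\Pol^d(D)]$ in terms of $[\,\overline{Y}_\lambda(d)]_{\GL(2)}$ --- now applied to the group $\GL(r)\times\GL(1)$, the representation $\tilde\rho$, and the principal bundle $P\times_M L^{\times}$. This gives
\[ [\,Y(A\otimes L)\subset A\otimes L\,]=[Y]_{\tilde\rho}(\alpha_1,\dots,\alpha_r,\xi), \]
where $\alpha_1,\dots,\alpha_r$ are the Chern roots of $P$ and $\xi=c_1(L)$ is the Chern root of $L$. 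Then I would substitute the identity of Proposition~\ref{twisted}, namely $[Y]_{\tilde\rho}(a_1,\dots,a_r,x)=[Y]_\rho(a_1+\tfrac{w_1}{d}x,\dots,a_r+\tfrac{w_r}{d}x)$, with $a_i\mapsto\alpha_i$ and $x\mapsto\xi$; this yields $[Y]_\rho(\alpha_1+\tfrac{w_1}{d}\xi,\dots,\alpha_r+\tfrac{w_r}{d}\xi)$, which is exactly the claimed formula.

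I do not expect a genuine geometric obstacle; the only thing demanding care is the bookkeeping of conventions. One must check that the generators $a_i$ of $H^*_{\GL(r)}$ (the first Chern classes of the tautological line bundles over $\B\mathbb{T}^r$) specialize to the Chern roots $\alpha_i$ of $P$, that the $\GL(1)$-generator $x$ is $c_1$ of the tautological line bundle and hence matches $\xi=c_1(L)$ rather than $c_1(L^{\vee})$, and that the integers $w_i$ are read off in the $a_i$-convention, not the $(a,b)$-convention of Section~\ref{sec:conventions}. This is precisely the kind of sign issue the paper flags at the end of Section~\ref{sec:conventions}, but once one is consistent the corollary follows immediately from Proposition~\ref{twisted} together with the universal property.
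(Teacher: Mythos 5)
Your proposal is correct and follows essentially the same route as the paper: identify $Y(A\otimes L)$ as the associated subvariety bundle of $P\times_M L^{\times}$, invoke the universal property of $[Y]_{\tilde\rho}$ via the classifying map, and then substitute Proposition \ref{twisted}, with $\tilde\kappa^*a_i=\alpha_i$ and $\tilde\kappa^*x=\xi$. The only difference is that the paper first passes explicitly to the splitting manifold $\fl(P\times_{\GL(r)}\C^r)$ so that $P$ becomes a $\T^r$-bundle and the Chern roots $\alpha_i$ are honest cohomology classes---the point you acknowledge as "bookkeeping" but do not carry out.
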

\begin{proof} Using the splitting principle, we can replace $P$ with a principal $\T^r$-bundle. Indeed, we have the splitting manifold $\hat{M}:=\fl(P\times_{\GL(r)}\C^r)$ with the property that the projection $p:\hat{M}\to M$ induces an injective homomorphism $p^*:H^*(M)\to H^*(\hat{M})$ and $p^*P$ can be reduced to a principal $\T^r$-bundle. We can also pull back the bundles $A$ and $L$ to $\hat{M}$. Therefore, in the rest of the proof let $P \to M$ denote a principal $\T^r$-bundle.

As $Y( A \otimes L ) = (P \times_M L^{\times}) \times_{\tilde{\rho}} Y$, ---by the universal property of
$\left[ Y \right]_{\tilde{\rho}}$ and Proposition \ref{twisted}---we have
  \[ \left[ Y(A \otimes L) \subset  A \otimes L \right]=
  \tilde{\kappa}^* \left[ Y \right]_{\tilde{\rho}}=\tilde{\kappa}^* \left(
[Y]_\rho\big(a_1+\frac{w_1}{d}x,\dots,a_r+\frac{w_r}{d}x\big) \right)
, \]
    where  $\tilde{\kappa}: M \to \B\left(\T^r\times \GL(1) \right)=
    \B \T^r \times \B \GL(1) $ is the classifying map of $P \times_M L^{\times}$. We complete the proof by noticing that
    $\tilde{\kappa}^{*} a_i=\alpha_i$ and $\tilde{\kappa}^* x=\xi$.
\end{proof}

\begin{corollary}\label{cor:equivclass_of_twisted}
Let $\rho:\T \to \GL(V)$ be a representation of the torus $\T=\T^r$ that contains the scalars, and let $Y \subset V$ be a $\rho$-invariant closed subvariety. Let $L'\to M'$ be a $\T$-line bundle over the $\T$-space $M'$. Then $V \otimes L' \to M'$ is also a $\T$-vector bundle via the diagonal action. Then for the $\T$-invariant subvariety $Y(V\otimes L')$ we have
  \[ \left[ Y(V \otimes L') \subset V \otimes L' \right]_{\T}=
           [Y]_{\rho}\left( a_1+\frac{w_i}{d}c_1^{\T}( L'),\dots,a_r+\frac{w_r}{d}c_1^{\T}( L') \right),\]
where $[Y]_\rho \in  H^*_{\T}\cong\Z[a_1,\dots,a_r]$ and $c_1^{\T}(L')\in H^*_{\T}(M')$ is the $\T$-equivariant first Chern class of $L'$.

\end{corollary}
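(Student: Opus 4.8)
The plan is to deduce the corollary from Corollary \ref{twist4bundle} by passing to the Borel construction, which turns this equivariant statement over the $\T$-space $M'$ into a non-equivariant statement over the homotopy quotient $\B_\T M' := E\T \times_\T M'$ (with finite-dimensional approximations of $E\T$ as in Section \ref{sec:equi-class-def}). Concretely, recall that $H^*_\T(X) = H^*(E\T \times_\T X)$ and that, for a $\T$-equivariant vector bundle $A \to M'$, the class $[Z \subset A]_\T$ of a $\T$-invariant subvariety $Z$ is the ordinary class $[\,E\T \times_\T Z \subset E\T \times_\T A\,]$ over $\B_\T M'$. So everything reduces to identifying the bundle $E\T \times_\T (V \otimes L')$ and the subvariety $E\T \times_\T Y(V\otimes L')$ inside it.

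The key identification is
\[ E\T \times_\T (V \otimes L') \;\cong\; (P \times_\rho V) \otimes \hat L', \]
where $P := E\T \times M' \to \B_\T M'$ is the tautological principal $\T$-bundle and $\hat L' := E\T \times_\T L'$. This is just the standard compatibility of the Borel construction with the associated-bundle and tensor-product operations: $E\T \times_\T (M' \times V) = P \times_\rho V$ because $\T$ acts diagonally, with $\rho$ on the trivial $V$-factor, and $E\T \times_\T(\,\cdot \otimes L') = (\,\cdot\,) \otimes \hat L'$. The same compatibility identifies $E\T \times_\T Y(V \otimes L')$ with the subvariety $Y\big((P \times_\rho V) \otimes \hat L'\big)$ of Definition \ref{scalar-extension}.

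Next, I would apply Corollary \ref{twist4bundle} to $A = P \times_\rho V \to \B_\T M'$ and the line bundle $L = \hat L'$. Although Corollary \ref{twist4bundle} is phrased for $\GL(r)$-representations, its proof reduces at once to a principal $\T^r$-bundle, which is exactly the situation here since $\T = \T^r$; so it applies verbatim. This yields
\[ \big[\,Y\big((P\times_\rho V)\otimes \hat L'\big) \subset (P\times_\rho V)\otimes \hat L'\,\big] \;=\; [Y]_\rho\Big(\alpha_1 + \frac{w_1}{d}\xi,\,\dots,\,\alpha_r + \frac{w_r}{d}\xi\Big), \]
with $\alpha_1,\dots,\alpha_r$ the Chern roots of $P$ and $\xi = c_1(\hat L')$. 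It remains to recognize these classes: the classifying map of $P = E\T \times M'$ is the projection $\B_\T M' \to \B\T$, so each $\alpha_i$ is the image of the generator $a_i \in H^*_\T$ under the structure map $H^*_\T \to H^*_\T(M')$; and $\xi = c_1(\hat L') = c_1^\T(L')$ by the very definition of the equivariant first Chern class. Substituting and reading the equality back through the dictionary above gives exactly the asserted formula.

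I do not expect a genuine obstacle here: the statement is the ``equivariant upgrade'' of Corollary \ref{twist4bundle} in the same way that Corollary \ref{twist4bundle} is the ``over a base'' upgrade of Proposition \ref{twisted}, and the proof is essentially bookkeeping with the functor $E\T \times_\T(-)$. The only mildly delicate point is confirming that the $\GL(r)$-phrasing of Corollary \ref{twist4bundle} causes no loss of generality for the torus representation $\rho$ we have in hand --- but since that corollary's proof passes immediately to a principal torus bundle via a splitting manifold, the case we need is already contained in it.
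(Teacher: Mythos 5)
Your proposal is correct and follows essentially the same route as the paper: approximate the Borel construction of $M'$, identify $E\T\times_\T(V\otimes L')$ as the tensor product of the associated bundle $P\times_\rho V$ with the line bundle $E\T\times_\T L'$, apply Corollary \ref{twist4bundle} (whose proof already reduces to principal torus bundles, so the $\GL(r)$ phrasing is harmless), and recognize the Chern roots as the structure-map images of the $a_i$ and $\xi$ as $c_1^{\T}(L')$. The paper does exactly this, only writing it with explicit finite-dimensional approximations $(\C^N\setminus 0)^r\to\P(\C^N)^r$ and the comparison map $\beta$, which you acknowledge implicitly.
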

\begin{proof} Recall from Section \ref{sec:equi-class-def} that we can approximate $E\T\to \B\T$ with
   \[  P:=(\C^N\setminus 0)^r\to \P(\C^N)^r,   \]
 and for $\beta:P\times_{\T}M'\to \B_{\T}M'$ we have
   \[  \beta^*[Y(V\otimes L')\subset V\otimes L']_{\T}=   [P\times_{\T}\big(Y(V\otimes L')\big)\subset P\times_{\T}(V\otimes L')].  \]
 For $M:=P\times_{\T}M'$, $A:=p^*(P\times_{\T}V)$ for the fibration $p:M\to \P(\C^N)^r$, and $L:=P\times_{\T}L'$ Corollary \ref{twist4bundle} implies that
 \[  [P\times_{\T}\big(Y(V\otimes L')\big)\subset P\times_{\T}(V\otimes L')]=\left[Y(A\otimes L)\subset A\otimes L\right]=
 [Y]_\rho(\alpha_1+\frac{w_1}d\xi,\dots,\alpha_r+\frac{w_r}d\xi),\]
 where the $\alpha_i$'s are Chern roots of $p^*P$ and $\xi=c_1(L)$.
 Using that $\beta^*a_i=\alpha_i$, $\beta^*c_1^{\T}(L')=\xi$ and that $\beta^*$ is injective,
 we obtain the result.
\end{proof}

\medskip

Now, we are able to compute $[\,\tilde Y\subset E]$: Set $w_1=w_2=-1$ and $d=d'$ corresponding
to our $\GL(2)$-representation $\Pol^{d'}(\mathbb{C}^2)$.
In accordance with our conventions $a_1=-a$ and $a_2=-b$,
\begin{equation}\label{eq_c1TPolmQ}
c_1(\Pol^m(\mathbb{C}^2/\gamma))=-m\left( -(a+b)-c_1(\gamma) \right)=
m\left( a+b+c_1(\gamma) \right).
\end{equation}
Then, by Corollary \ref{cor:equivclass_of_twisted},
$[\tilde Y\subset E]=
[\, \overline{Y}_{\lambda'}(d')( \Pol^{d'}(\mathbb{C}^2) \otimes \Pol^{m}(\mathbb{C}^2/\gamma))
\subset \Pol^{d'}(\mathbb{C}^2) \otimes \Pol^{m}(\mathbb{C}^2/\gamma)]$
can be obtained from $[\, \overline{Y}_{\lambda'}(d') ]$ by substituting
\[ -a\mapsto -a+\frac{-1}{d'}\big(m(a+b+c_1(\gamma))\big)\ \ \text{and} \ \ -b\mapsto -b+\frac{-1}{d'}\big(m(a+b+c_1(\gamma))\big),\]
or, multiplying by $-1$, by substituting
\begin{equation}\label{eq_tildeYclasssubs}
a\mapsto a+\frac{m}{d'}\big(a+b+c_1(\gamma)\big)\ \ \text{and} \ \ b\mapsto b+\frac{m}{d'}\big(a+b+c_1(\gamma)\big).
\end{equation}

\subsection{The pushforward map \texorpdfstring{$\pi_!$}{pi!}}
According to the Atiyah-Bott-Berline-Vergne (ABBV) integral formula, for a cohomology class $\alpha\in H^*_{\T^2}\big(\P^1\times\Pol^d(\C^2)\big)$ its pushforward along
$\pi: \mathbb{P}^1 \times \Pol^d(\mathbb{C}^2) \to \Pol^d(\mathbb{C}^2)$ is
\[\pi_!\alpha=\int\limits_{\P^1} \alpha=
\frac{\alpha|_{\langle e_1\rangle}}{e(T_{\langle e_1\rangle} \P^1)}+\frac{\alpha|_{\langle e_2\rangle}}{e(T_{\langle e_2 \rangle} \P^1)}=
\frac{\alpha|_{\langle e_1\rangle}}{a-b}+\frac{\alpha|_{\langle e_2\rangle}}{b-a}, \]
where $\langle e_1\rangle$ and $\langle e_2\rangle$ are the fixed points of the torus $\T^2$ of diagonal matrices acting on $\P^1=\P(\C^2)$, and $-a,\ -b$ are the weights of $\langle e_1\rangle$ and $\langle e_2\rangle$, according to the conventions of Section \ref{sec:conventions}.

If $\alpha \in H^*_{\GL(2)}(\mathbb{P}^1)$ and $q(a,b):=\alpha|_{\langle e_2\rangle}$, then
$\alpha|_{\langle e_1\rangle}=q(b,a)$, therefore
\begin{equation}\label{equiv-integral}
\int\limits_{\P^1} \alpha=\partial (q).
\end{equation}
\bigskip

\begin{proof}[Proof of Theorem \ref{recursion4Y}]
  By \eqref{eq:push1} and \eqref{eq:product},
  \[
    [\, \overline{Y}_\lambda(d)\subset \Pol^{d}(\mathbb{C}^2)]=
    \frac{1}{e_m}\pi_!\Big[\tilde Y\subset \P^1\times\Pol^{d}(\mathbb{C}^2)\Big]=
    \frac{1}{e_m} \pi_! \left(  e\left(\big(\P^1\times\Pol^{d}(\mathbb{C}^2)\big)/E\right)\cdot [\tilde Y\subset E] \right),
  \]
where $E=\Pol^{d'}(\mathbb{C}^2) \otimes \Pol^{m}(\mathbb{C}^2/\gamma) \to \P^1$.
  The pushforward can be computed as above, therefore, to complete the proof we have to determine the
  restriction of
  \[ \alpha=e\left(\big(\P^1\times\Pol^{d}(\mathbb{C}^2)\big)/E\right)\cdot [\tilde Y\subset E] \]
  to $\langle e_2\rangle$. Restricting to $\langle e_2 \rangle$ amounts to substituting $c_1(\gamma) \mapsto
  -b$, hence using \eqref{eq_c1TPolmQ}, we have
  \begin{equation*}
    e(\Pol^d(\C^2)/E)|_{\langle e_2\rangle}=
    \left.
      \frac{ \prod_{i=0}^{d} \big( ia+(d-i)b \big) }
    {\prod_{i=0}^{d'} \big( m (c_1(\gamma)+a+b)+ia+(d'-i)b \big)}
    \right|_{c_1(\gamma)\mapsto-b}=
    \prod_{i=0}^{m-1}\big( ia+(d-i)b \big) ,
    \end{equation*}
    and by \eqref{eq_tildeYclasssubs},
    \begin{equation} \label{eq:shift}
    [\tilde Y\subset E]|_{\langle e_2\rangle}=\left[\,\overline Y_{\lambda'}(d')\right]_{m/d'},
  \end{equation}
  where we used the notation of Theorem \ref{recursion4Y}.
\end{proof}

\bigskip

\begin{remark}
Note that the left-hand side of (\ref{eq:shift}) can also be interpreted as  the equivariant cohomology class of the $\T$-invariant subvariety $x^m \overline Y_{\lambda'}(d') $ of $x^m \Pol^{d'}(\mathbb{C}^2)$:
\begin{equation*}
\left.\left[\tilde Y\subset E\right]\right|_{\langle e_2 \rangle}=
\left[ x^m \,\overline Y_{\lambda'}(d') \subset x^m \Pol^{d'}(\mathbb{C}^2) \right]_{\T},
\end{equation*}
where $y,x \in \Pol^{1}(\mathbb{C}^2)$ denotes the dual basis of $e_1,e_2$.
\end{remark}

\section{Polynomiality of \texorpdfstring{$\left[\,\overline Y_\lambda(d)\right]$}{[Ylambda(d)]}} \label{sec:poly}

\begin{theorem} \label{Y-poly} The classes $\left[\,\overline Y_\lambda(d)\right]$ are polynomials in $d$: $\left[\,\overline Y_\lambda(d)\right]\in \Q[c_1,c_2,d]=
  \Q[a,b]^{S_2}[d]$.
\end{theorem}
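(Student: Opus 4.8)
The plan is to prove Theorem \ref{Y-poly} by induction on the length $k$ of the partition $\lambda$, using the recursion of Theorem \ref{recursion4Y} as the inductive step. The base case is $\lambda=\emptyset$, where $\left[\,\overline Y_\emptyset(d)\right]=1$ is trivially polynomial in $d$. For the inductive step, assume $\left[\,\overline Y_{\lambda'}(d')\right]\in\Q[a,b]^{S_2}[d']$, where $\lambda'$ has one fewer part; since $d'=d-m$, this is also a polynomial in $d$ with coefficients in $\Q[a,b]^{S_2}$. I then need to show that the three operations appearing in the recursion formula,
\[ \left[\,\overline Y_\lambda(d)\right]=\frac{1}{e_m}\,\partial\Big(\left[\,\overline Y_{\lambda'}(d')\right]_{m/d'}\prod_{i=0}^{m-1}\big( ia+(d-i)b \big) \Big),\]
preserve the property of being a polynomial in $d$ with coefficients in $\Q[a,b]$ (symmetry in $a,b$ is automatically restored by the outer $\partial$, so internally I only need to track $\Q[a,b][d]$).

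The three operations to analyze are: (i) the product with $\prod_{i=0}^{m-1}(ia+(d-i)b)$, which is manifestly a polynomial in $d$ with coefficients in $\Q[a,b]$, so multiplication preserves the class $\Q[a,b][d]$; (ii) the divided difference $\partial$, which lowers the $a,b$-degree by one and visibly commutes with taking coefficients in $d$, hence preserves $\Q[a,b][d]$; and (iii) the twisting substitution $\alpha\mapsto\alpha_{m/d'}$, i.e. $\alpha(a,b)\mapsto\alpha\big(a+\tfrac{m}{d'}a,\,b+\tfrac{m}{d'}a\big)$. The only subtlety is operation (iii): substituting $a\mapsto a+\tfrac{m}{d'}a=\tfrac{d'+m}{d'}a=\tfrac{d}{d'}a$ and $b\mapsto b+\tfrac{m}{d'}a$ introduces the denominator $d'=d-m$, so a priori $\left[\,\overline Y_{\lambda'}(d')\right]_{m/d'}$ lies only in $\Q[a,b][d,\tfrac1{d-m}]$ rather than in $\Q[a,b][d]$. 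The main obstacle is therefore to show that the factors of $d-m$ in the denominator are cancelled.

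I expect the cancellation to work as follows. If $\left[\,\overline Y_{\lambda'}(d')\right]$ is homogeneous of degree $c'=|\tilde{\lambda'}|$ in $a,b$, then writing $\left[\,\overline Y_{\lambda'}(d')\right]=\sum_{j} p_j(d')\,a^j b^{c'-j}$ with $p_j\in\Q[d']$, the substitution gives
\[ \left[\,\overline Y_{\lambda'}(d')\right]_{m/d'}=\sum_j p_j(d-m)\Big(\frac{d}{d-m}a\Big)^{\!j}\Big(b+\frac{m}{d-m}a\Big)^{\!c'-j}
=\frac{1}{(d-m)^{c'}}\sum_j p_j(d-m)\,(da)^j\big((d-m)b+ma\big)^{c'-j}.\]
So the worst denominator is $(d-m)^{c'}$, and the numerator is a polynomial in $d$ with coefficients in $\Q[a,b]$. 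To clear it, I will use the fact — already asserted in the Remark following Theorem \ref{recursion4Y} and guaranteed by the construction — that the left-hand side $\left[\,\overline Y_\lambda(d)\right]$ lies in $\Z[c_1,c_2]=\Q[a,b]^{S_2}$ for every integer $d\geq|\lambda|$. Thus the rational function in $d$ produced by the recursion, which has only $d=m$ as a possible pole, takes finite values at all integers $d\geq|\lambda|$; since $m<|\lambda|$ whenever $\lambda'\neq\emptyset$ (and the case $\lambda=(m)$ is handled directly by \eqref{Y_m}, which has no denominator at all), the point $d=m$ is not in the relevant range, but more to the point a rational function of $d$ whose only candidate pole is a single value yet which is finite at infinitely many integers must in fact be a polynomial. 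Concretely: the recursion exhibits $\left[\,\overline Y_\lambda(d)\right]$ as $q(d)/(d-m)^{N}$ for some $q\in\Q[a,b][d]$ and some $N\ge 0$; evaluating at the infinitely many integers $d\ge|\lambda|$ where the value is a genuine element of $\Q[a,b]$, we conclude $(d-m)^N \mid q(d)$ in $\Q[a,b][d]$, hence $\left[\,\overline Y_\lambda(d)\right]\in\Q[a,b][d]$, and it is symmetric because it equals $\partial(\cdot)$. This completes the induction.

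Alternatively, and perhaps more cleanly, one can avoid invoking integrality of the left side at each step: prove by the same induction that $\left[\,\overline Y_\lambda(d)\right]\in\Q[a,b]^{S_2}\big[d,\tfrac{1}{d-m},\tfrac{1}{d-m-m''},\dots\big]$ — i.e. a priori a rational function whose poles are among the "partial degrees" $d-m, d'-m'',\dots$ dictated by the recursion — and then separately observe that since each such class is an honest polynomial ring element for every integer $d\ge|\lambda|$ (by the universal-property construction of Section \ref{sec:equi-class-def}), a rational function of $d$ with finitely many possible poles that is regular at all large integers has no poles at all, hence is a polynomial. Either way, the heart of the matter is the single clearing-of-denominators lemma for the twisting substitution, and the rest is bookkeeping on degrees.
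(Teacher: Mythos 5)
Your inductive framework --- induction on the length of $\lambda$ via Theorem \ref{recursion4Y}, the observation that multiplication by $\prod_{i=0}^{m-1}\big(ia+(d-i)b\big)$ and the divided difference preserve polynomiality in $d$, and the reduction of the whole problem to clearing the denominator $(d-m)^{N}$ introduced by the twist $\alpha\mapsto\alpha_{m/d'}$ --- is exactly the paper's. But the step you use to clear that denominator is not valid. You argue that since $\left[\,\overline Y_\lambda(d)\right]$ is, for each integer $d\ge|\lambda|$, ``a genuine element of $\Q[a,b]$'', the coefficient-wise rational function $q(d)/(d-m)^N$ produced by the recursion must be a polynomial; and in your alternative phrasing, that a rational function with finitely many possible poles which is regular at all large integers has no poles at all. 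Both inferences fail: every rational function of $d$ is regular at all but finitely many integers, and evaluating $q(d)/(d-m)^N$ at an integer $d_0\neq m$ automatically yields an element of $\Q[a,b]$, since one merely divides by the nonzero rational number $(d_0-m)^N$; so no divisibility of $q$ by $(d-m)^N$ can be extracted from finiteness alone. The function $1/(d-m)$ is the basic counterexample: its only pole is $d=m$, it is finite at every integer $d>m$, and it is not a polynomial.

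What rescues the argument --- and what the paper's proof actually uses --- is integrality, not finiteness. You quote the right fact, namely that $\left[\,\overline Y_\lambda(d)\right]\in\Z[c_1,c_2]$ for every integer $d\ge|\lambda|$, but you never use the $\Z$. The statement that is needed is the paper's Lemma \ref{rat-poly}: a rational function taking \emph{integer} values at all sufficiently large integers is a polynomial (proved by finite differences). The paper applies it one step earlier than you would: by \eqref{eq:shift} the twisted class $\left[\,\overline Y_{\lambda'}(d')\right]_{m/d'}$ equals $[\tilde Y\subset E]|_{\langle e_2\rangle}$, i.e.\ it is itself an equivariant class of an invariant subvariety, so each coefficient of $a^ib^j$ is an integer for every integer $d$; Lemma \ref{rat-poly} then makes the twisted class a polynomial in $d$ before one multiplies by $\prod_{i=0}^{m-1}\big(ia+(d-i)b\big)$ and applies $\partial$. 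Your variant --- applying the same reasoning to the final class, whose coefficients lie in $\Z$ for each integer $d\ge|\lambda|$ --- works equally well once integer-valuedness (rather than mere membership in $\Q[a,b]$) is invoked. So the gap is fixable and the fix is close to what you wrote, but as stated the crucial clearing-of-denominators deduction is unsupported.
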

\begin{proof}
We use the following well-known statement:
\begin{lemma}\label{rat-poly} Suppose that $q(x)$ is a rational function, such that $q(d)$ is an integer for all $d>>0$ integers. Then $q(x)$ is a polynomial. \end{lemma}
The lemma can be proved by induction on the degree of $q(x)$: Notice that $q(x+1)-q(x)$ has the same property but smaller degree than $q(x)$.
\bigskip

We prove the theorem by induction on the length of the partition $\lambda$ using Theorem \ref{recursion4Y}. Suppose that we already know that $\left[\,\overline Y_{\lambda'}(d')\right]$ is a polynomial in $d'=d-m$. Then all the coefficients (of the monomials $a^ib^j$) of $\left[\,\overline Y_{\lambda'}(d-m)\right]_{m/(d-m)}$ are rational functions. The values of these coefficients are integers for $d>>0$
since, by (\ref{eq:shift}), $\left[\,\overline Y_{\lambda'}(d-m)\right]_{m/(d-m)}$ is an equivariant cohomology class of an invariant subvariety.
Then Lemma \ref{rat-poly} implies that the class $\left[\,\overline Y_{\lambda'}(d-m)\right]_{m/(d-m)}$ is a polynomial in $d$. The class $e(\Pol^d(\C^2)/E)|_{\langle e_2\rangle}=\prod_{i=0}^{m-1}\big( ia+(d-i)b \big) $ is clearly a polynomial in $d$, and the divided difference operator preserves polynomiality in $d$.
\end{proof}
Using Proposition \ref{Y-and-pluecker}, we see that Theorem \ref{Y-poly} is equivalent to
\begin{theorem}\label{pl-poly}
  The Pl\"ucker numbers $\Pl_{\lambda;i}(d)$ for $0\leq i\leq |\tilde\lambda|$ and $i\equiv |\tilde\lambda|$
  $(\!\!\!\! \mod 2)$ are polynomials in $d$: there is a unique polynomial $p(d)$ such that $\Pl_{\lambda;i}(d)=p(d)$ for $d\geq|\lambda|$.
\end{theorem}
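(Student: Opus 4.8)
The plan is to obtain Theorem \ref{pl-poly} as a formal consequence of Theorem \ref{Y-poly} together with Proposition \ref{Y-and-pluecker}, spelling out the equivalence asserted just above its statement.

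First I would fix the partition $\lambda=(2^{e_2},\dots,m^{e_m})$ and recall from Theorem \ref{Y-poly} that $\left[\,\overline Y_\lambda(d)\right]$ is a polynomial in $d$ with coefficients in $\Q[c_1,c_2]$; expanding in the monomial basis $\{c_1^\alpha c_2^\beta\}$, every coefficient is then a polynomial in $d$. The class is homogeneous of cohomological degree $|\tilde\lambda|$, so only the monomials $c_1^\alpha c_2^\beta$ with $\alpha+2\beta=|\tilde\lambda|$ occur, and these span exactly the same graded piece of $\Q[c_1,c_2]$ as the Schur polynomials $s_{|\tilde\lambda|-j,j}$ for $0\leq j\leq t:=\lfloor|\tilde\lambda|/2\rfloor$. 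The change of basis between $\{c_1^\alpha c_2^\beta\}$ and $\{s_{|\tilde\lambda|-j,j}\}$ in this fixed degree is given by rational numbers not depending on $d$, so the Schur coefficients of $\left[\,\overline Y_\lambda(d)\right]$ are again polynomials in $d$; calling them $q_j(d)$, we get the identity
\[
\left[\,\overline Y_\lambda(d)\right]=\sum_{j=0}^{t} q_j(d)\, s_{|\tilde\lambda|-j,j}
\]
in $\Q[c_1,c_2,d]$.

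Next I would compare this with Proposition \ref{Y-and-pluecker}, which states that for every integer $d\geq|\lambda|$ one has $\left[\,\overline Y_\lambda(d)\right]=\sum_{j=0}^{t}\Pl_{\lambda;|\tilde\lambda|-2j}(d)\,s_{|\tilde\lambda|-j,j}$. Since the $s_{|\tilde\lambda|-j,j}$ are linearly independent, this forces $\Pl_{\lambda;|\tilde\lambda|-2j}(d)=q_j(d)$ for all integers $d\geq|\lambda|$. Re-indexing by $i=|\tilde\lambda|-2j$, the values $j\in\{0,\dots,t\}$ correspond precisely to the $i$ with $0\leq i\leq|\tilde\lambda|$ and $i\equiv|\tilde\lambda|\pmod 2$, and $p:=q_{(|\tilde\lambda|-i)/2}$ is a polynomial with $\Pl_{\lambda;i}(d)=p(d)$ for all $d\geq|\lambda|$. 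Uniqueness is automatic, since a polynomial is determined by its restriction to the infinite set $\{d\in\Z:d\geq|\lambda|\}$.

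I do not anticipate a genuine obstacle here: the entire content is carried by Theorem \ref{Y-poly}, whose proof in turn rests on the recursion of Theorem \ref{recursion4Y} and Lemma \ref{rat-poly}. The only point worth stating with some care is the elementary linear-algebra observation that, within a fixed cohomological degree, passing between the monomial and Schur bases of $\Q[c_1,c_2]$ is a $d$-independent operation — so that ``the universal class is a polynomial in $d$'' and ``each Pl\"ucker number is a polynomial in $d$'' really are the same assertion, as claimed.
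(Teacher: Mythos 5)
Your proposal is correct and follows essentially the paper's own route: the paper deduces Theorem \ref{pl-poly} directly from Theorem \ref{Y-poly} via Proposition \ref{Y-and-pluecker}, exactly as you do, merely leaving implicit the $d$-independent change of basis between monomials and Schur polynomials that you spell out. Your added remarks on linear independence of the $s_{|\tilde\lambda|-j,j}$ and on uniqueness from the infinitude of $\{d\geq|\lambda|\}$ are the right (routine) details and introduce nothing beyond the paper's argument.
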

\begin{remark}
By definition, these polynomials have integer values for $d>>0$, therefore for every integer. \end{remark}
\bigskip

\subsection{The leading term of \texorpdfstring{$\left[\,\overline Y_\lambda(d)\right]$}{{[Ylambda(d)]}}}
The asymptotic behaviour of the classes $\left[\,\overline Y_\lambda(d)\right]$ is determined
by the largest $d$-degree parts. Using the same inductive argument, we can find out what these $d$-leading
terms are.
\begin{theorem}\label{fundclass-leading}
  For any $\lambda=\left( 2 ^{e_2},\cdots,m^{e_m} \right)$ the top $d$-degree part of $\left[\,\overline Y_\lambda(d)\right] \in \Q[a,b]^{S_2}[d]$ is
  \[ \frac{1}{\prod_{i=2}^m \left( e_i!\right)} h_{\tilde{\lambda}} d^{|\lambda|}, \]
where $h_\nu$ is the \emph{complete symmetric polynomial} corresponding to the partition $\nu=(\nu_1,\dots,\nu_k)$: $h_\nu=\prod h_{\nu_i}$
 with $h_i$  the $i$-th complete symmetric polynomial in $\{a,b\}$.
\end{theorem}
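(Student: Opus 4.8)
The plan is to run the same induction on the length of $\lambda$ that proved Theorems \ref{Y-poly} and \ref{recursion4Y}, tracking only the top $d$-degree part at each step. Write $\lambda' = (2^{e_2},\dots,m^{e_m-1})$ and $d' = d-m$ as in Theorem \ref{recursion4Y}, and let $L_\lambda(a,b)$ denote the $d^{|\lambda|}$-coefficient of $[\,\overline Y_\lambda(d)]$, which exists by Theorem \ref{Y-poly}. The base case is the empty partition: $[\,\overline Y_\emptyset(d)] = 1$, which has $d$-degree $0 = |\emptyset|$ and leading coefficient $h_\emptyset = 1$, matching the claimed formula. For the inductive step I assume $L_{\lambda'} = \frac{1}{\prod_{i=2}^m(e_i!)'} h_{\tilde\lambda'} $ where the primed product accounts for the reduced multiplicity $e_m-1$, i.e. $\prod_{i=2}^{m}(e_i!)' = \left(\prod_{i=2}^{m-1}e_i!\right)(e_m-1)!$, and $\tilde\lambda' = (\lambda_1-1,\dots,\lambda_{k-1}-1)$, the reduction of $\lambda'$ (which drops one copy of $m-1$ from $\tilde\lambda$).

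Now I analyze each factor in the recursion $[\,\overline Y_\lambda(d)] = \frac{1}{e_m}\partial\big([\,\overline Y_{\lambda'}(d')]_{m/d'}\prod_{i=0}^{m-1}(ia+(d-i)b)\big)$ for its top $d$-behavior. The product $\prod_{i=0}^{m-1}(ia+(d-i)b)$ has $d$-degree $m$ with leading term $(db)^{\text{shifted}}$... more precisely its $d^m$-coefficient is $b^m$ (each factor contributes $b\cdot d$ to top order since $(d-i)b = db - ib$). For the twisted class $[\,\overline Y_{\lambda'}(d')]_{m/d'}$: recall $\alpha_{m/d'}(a,b) = \alpha(a + \frac{m}{d'}a,\, b + \frac{m}{d'}a)$. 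As $d\to\infty$ the substitution parameter $\frac{m}{d'}\to 0$, so to top $d$-order $[\,\overline Y_{\lambda'}(d')]_{m/d'} \approx [\,\overline Y_{\lambda'}(d')] \approx L_{\lambda'}(a,b)\, (d')^{|\lambda'|} = L_{\lambda'}(a,b)\, d^{|\lambda'|}(1+O(1/d))$. Since $|\lambda| = |\lambda'| + m$, multiplying these two factors gives top $d$-degree $|\lambda'| + m = |\lambda|$ with leading coefficient $L_{\lambda'}(a,b)\cdot b^m$. Applying $\partial$ (which is $d$-degree preserving and linear over $\Q[a,b]^{S_2}$-scalars but acts on the $a,b$ dependence) yields $\partial(L_{\lambda'}(a,b)\, b^m)$ as the $d^{|\lambda|}$-coefficient of $e_m[\,\overline Y_\lambda(d)]$.

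The crux is then the purely combinatorial identity
\[
\frac{1}{e_m}\,\partial\big(h_{\tilde\lambda'}(a,b)\, b^m\big) = h_{\tilde\lambda}(a,b),
\]
after absorbing the inductive constant: since $\frac{1}{e_m}\cdot\frac{1}{\prod(e_i!)'} = \frac{1}{\prod_{i=2}^m e_i!}$ exactly when $e_m\cdot(e_m-1)! = e_m!$, the constant works out, so it suffices to prove $\partial(h_{\tilde\lambda'}\,b^m) = e_m\, h_{\tilde\lambda}$. Wait — more carefully, since $\tilde\lambda = \tilde\lambda' \cup \{m-1\}$, we have $h_{\tilde\lambda} = h_{\tilde\lambda'}\cdot h_{m-1}$, and $h_{\tilde\lambda'}$ is symmetric in $a,b$ so $\partial(h_{\tilde\lambda'}\, b^m) = h_{\tilde\lambda'}\,\partial(b^m) = h_{\tilde\lambda'}\cdot\frac{b^m - a^m}{b-a} = h_{\tilde\lambda'}\cdot h_{m-1}= h_{\tilde\lambda}$. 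This uses the elementary fact that $\partial$ is $\Q[a,b]^{S_2}$-linear and $\partial(b^m) = h_{m-1}(a,b) = a^{m-1}+a^{m-2}b+\dots+b^{m-1}$ in two variables. The multiplicative constant then reads: the $d^{|\lambda|}$-coefficient of $[\,\overline Y_\lambda(d)]$ is $\frac{1}{e_m}\cdot\frac{1}{\prod_{i=2}^{m-1}e_i!\cdot(e_m-1)!}\, h_{\tilde\lambda} = \frac{1}{\prod_{i=2}^m e_i!}\, h_{\tilde\lambda}$, completing the induction.

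**The main obstacle** is making the approximation $[\,\overline Y_{\lambda'}(d')]_{m/d'} = L_{\lambda'}(a,b)\,d^{|\lambda'|} + (\text{lower order in }d)$ fully rigorous: one must check that the substitution $a\mapsto a+\frac{m}{d'}a$, $b\mapsto b+\frac{m}{d'}a$ applied to a polynomial of $d'$-degree $|\lambda'|$ does not create spurious higher powers of $d$. But each monomial $a^ib^j d'^{\,\ell}$ with $i+j$ fixed (equal to the codimension $|\tilde{\lambda'}|$) and $\ell \le |\lambda'|$ maps to $a^i(b + \frac{m}{d'}a)^j d'^{\,\ell}$, and expanding $(b+\frac{m}{d'}a)^j = b^j + \binom{j}{1}b^{j-1}\frac{m}{d'}a + \dots$ only lowers the power of $d'$; so the top $d$-part is unchanged and equals the top $d$-part of $[\,\overline Y_{\lambda'}(d')]$ with $d'$ replaced by $d$ (the shift $d' = d-m$ also only affects lower-order terms). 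Once this is cleanly stated, the rest is the bookkeeping of $d$-degrees through multiplication and $\partial$, plus the one-line symmetric-function identity above.
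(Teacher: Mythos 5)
Your proof is correct and follows essentially the same route as the paper's: induction on the length of $\lambda$ via Theorem \ref{recursion4Y}, noting that the twist $m/d'$ and the shift $d'=d-m$ do not affect the $d$-leading term, that the product contributes $d^m b^m$ to top order, and that $\partial(h_{\tilde\lambda'}b^m)=h_{\tilde\lambda'}h_{m-1}=h_{\tilde\lambda}$ by symmetry of $h_{\tilde\lambda'}$. Your extra care in checking that the substitution $a\mapsto a+\frac{m}{d'}a$, $b\mapsto b+\frac{m}{d'}a$ only lowers $d$-degree is exactly the point the paper leaves implicit.
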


\begin{proof} First, notice that $\left[\,\overline Y_\emptyset(d)\right]=1$, proving the codimension $0$ case.

 Now, let $\lambda=\left( 2^{e_2},\cdots,m^{e_m} \right)$ and consider the divided difference formula of Theorem \ref{recursion4Y}.
  The induction hypothesis says that the $d$-leading term of $\left[\,\overline Y_{\lambda'}(d)\right]$ is
  \[ \frac{1}{\prod_{i=2}^{m-1} \left(
   e_i!  \right) \left( e_m -1\right)!} h_{\tilde{\lambda'}}
  d^{|\lambda|-m}.\]
 The class $\left[\,\overline Y_{\lambda'}(d-m)\right]_{m/(d-m)}$ has the same
 $d$-leading term, in particular, it is symmetric.
 The largest $d$-degree part of $\prod_{i=0}^{m-1}\big(ia+(d-i)b\big)$ is $d^m b^m$, hence the $d$-leading term of
 $\left[\,\overline Y_\lambda(d)\right]$ is
 \[ \frac{1}{e_m} \frac{1}{\prod_{i=2}^{m-1}\left( e_i!  \right)
   \left( e_m -1\right)!}  h_{\tilde{\lambda'}} d^{|\lambda|-m} \frac{d^m(b^m-a^m)}{b-a}
   =\frac{1}{\prod_{i=2}^{m}\left( e_i!\right)} h_{\tilde{\lambda'}} h_{m-1} d^{|\lambda|}=
 \frac{1}{\prod_{i=2}^{m}\left( e_i!\right)} h_{\tilde{\lambda}}  d^{|\lambda|}.\]
\end{proof}
In particular, we have that
\begin{theorem}  \label{deg=la} The polynomial $\left[\,\overline Y_{\lambda}(d)\right]\in \Q[c_1,c_2][d]$ has $d$-degree $|\lambda|$.
\end{theorem}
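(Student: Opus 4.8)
The plan is to deduce Theorem \ref{deg=la} directly from Theorem \ref{fundclass-leading}, which already does all the real work. By Theorem \ref{Y-poly} the class $[\,\overline Y_\lambda(d)]$ is genuinely a polynomial in $d$ with coefficients in $\Q[c_1,c_2]=\Q[a,b]^{S_2}$, and the inductive argument proving Theorem \ref{fundclass-leading} identifies its top $d$-degree part as $\frac{1}{\prod_{i=2}^m(e_i!)}\,h_{\tilde\lambda}\,d^{|\lambda|}$; in particular that argument produces no monomial of $d$-degree exceeding $|\lambda|$. Hence it only remains to check that the stated coefficient of $d^{|\lambda|}$ is a nonzero element of $\Q[a,b]^{S_2}$, for then the $d$-degree is exactly $|\lambda|$.

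This last point is immediate. Writing $\tilde\lambda=(\lambda_1-1,\dots,\lambda_k-1)$ with each $\lambda_i\ge 2$, we have $h_{\tilde\lambda}=\prod_{i=1}^k h_{\lambda_i-1}$, where $h_j=\sum_{s=0}^j a^s b^{j-s}$ is the $j$-th complete symmetric polynomial in the two Chern roots $a,b$. Each factor $h_{\lambda_i-1}$ is a nonzero polynomial — for instance its value at $a=b=1$ equals $\lambda_i>0$ — so, $\Q[a,b]$ being an integral domain, the product $h_{\tilde\lambda}$ is nonzero; and $\prod_{i=2}^m(e_i!)$ is a positive integer. Therefore the coefficient of $d^{|\lambda|}$ in $[\,\overline Y_\lambda(d)]$ is nonzero and $\deg_d[\,\overline Y_\lambda(d)]=|\lambda|$.

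There is essentially no obstacle here: once Theorem \ref{fundclass-leading} is in hand the statement is a one-line corollary, the only thing worth spelling out being the non-vanishing of $h_{\tilde\lambda}$. One could in principle avoid appealing to the precise leading coefficient and instead re-run the induction of Theorem \ref{Y-poly} while tracking $d$-degrees through the divided-difference recursion of Theorem \ref{recursion4Y}; but bounding the $d$-degree of $[\,\overline Y_{\lambda'}(d-m)]_{m/(d-m)}$ from below is more delicate, because of the cancellation of the negative powers of $d-m$ introduced by the substitution, so routing the argument through Theorem \ref{fundclass-leading} is the cleaner route.
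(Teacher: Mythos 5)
Your proposal is correct and follows the paper's own route: the paper states Theorem \ref{deg=la} as an immediate consequence ("In particular") of the leading-term formula in Theorem \ref{fundclass-leading}, exactly as you do. Your only addition is to spell out the non-vanishing of $h_{\tilde\lambda}$ (e.g.\ by evaluating at $a=b=1$), which the paper leaves implicit and which is a correct and harmless elaboration.
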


\subsection{\texorpdfstring{$\Pl_{\lambda;|\tilde{\lambda}|}$}{Pllambda;|lambdatilde|} and \texorpdfstring{$d$}{d}-degrees of Pl\"ucker numbers}
The recursive argument in Theorem \ref{recursion4Y} can also be used to calculate $\Pl_{\lambda;|\tilde{\lambda}|}(d)$ for an arbitrary partition $\lambda$:

\begin{theorem}\label{plpoint}
Let $\lambda=\left( 2^{e_2},\cdots,m^{e_m} \right)$ be a partition without 1's. Then
\[\Pl_{\lambda;|\tilde{\lambda}|}=
  \coef\!\big(s_{|\tilde\lambda|},\left[\,\overline{Y}_\lambda(d)\right]\big)=
\frac1{\prod_{i=2}^m (e_i!)}d(d-1)\cdots(d-|\lambda|+1).\]
In other words, for $n = |\tilde{\lambda}|+2$ we calculated the number of
$\lambda$-lines for a generic degree $d$
hypersurface in $\mathbb{P}(\mathbb{C}^{n})$ through a generic point of
$\mathbb{P}(\mathbb{C}^n)$.
\end{theorem}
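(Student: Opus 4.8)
The plan is to extract the coefficient of $s_{|\tilde\lambda|}$ from the recursion of Theorem \ref{recursion4Y} and verify that it satisfies the expected multiplicative identity by induction on the length of $\lambda$. By Proposition \ref{Y-and-pluecker}, $\Pl_{\lambda;|\tilde{\lambda}|}(d)$ is exactly $\coef(s_{|\tilde\lambda|},[\,\overline{Y}_\lambda(d)])$, the coefficient of $s_{|\tilde\lambda|}$ (the $j=0$ term), so the first equality is definitional and only the closed formula needs proof. The key observation is that $s_{|\tilde\lambda|}=h_{|\tilde\lambda|}$ is the complete homogeneous symmetric polynomial of degree $|\tilde\lambda|$ in the Chern roots $a,b$; equivalently, writing a symmetric polynomial $\alpha(a,b)$ in the basis $\{s_{k,l}\}$, the coefficient of $s_{k}=h_k$ is obtained by the linear functional that picks out, after applying $\partial$ to $\alpha(a,b)\cdot b$ or similar, the appropriate top piece. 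More usefully: $\coef(s_N, \alpha)$ for $\alpha$ homogeneous of degree $N$ equals $\partial(\alpha)$ evaluated appropriately — concretely, $\coef(h_N,\alpha) = [a^N]\,\alpha(a,0)$ is the coefficient of $a^N$ in the single-variable specialization $\alpha(a,0)$, since $h_k(a,0)=a^k$ and $s_{k,l}(a,0)=0$ for $l\geq 1$.

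With that reduction in hand, I would argue as follows. Set $P_\lambda(d) := \coef(s_{|\tilde\lambda|},[\,\overline{Y}_\lambda(d)]) = [a^{|\tilde\lambda|}]\,[\,\overline{Y}_\lambda(d)](a,0)$. For the base case $\lambda=\emptyset$ we have $[\,\overline Y_\emptyset(d)]=1$, so $P_\emptyset(d)=1$, matching the empty product. For the inductive step, write $\lambda=(2^{e_2},\dots,m^{e_m})$, $\lambda'=(2^{e_2},\dots,m^{e_m-1})$, $d'=d-m$, and apply Theorem \ref{recursion4Y}:
\[
[\,\overline Y_\lambda(d)] = \frac{1}{e_m}\,\partial\Big([\,\overline Y_{\lambda'}(d')]_{m/d'}\prod_{i=0}^{m-1}(ia+(d-i)b)\Big).
\]
Now specialize the roots by $b\mapsto 0$ inside the argument of $\partial$ and track the top $a$-power. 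The substitution $\alpha_{m/d'}(a,b)=\alpha(a+\tfrac{m}{d'}a,\,b+\tfrac{m}{d'}a)$ means that after setting $b=0$ the inner class becomes $[\,\overline Y_{\lambda'}(d')]\big((1+\tfrac{m}{d'})a,\tfrac{m}{d'}a\big)$, a polynomial in $a$ alone of degree $|\tilde{\lambda'}|$; multiplying by $\prod_{i=0}^{m-1}(ia+(d-i)\cdot 0)=a^{m-1}\prod_{i=1}^{m-1} i = (m-1)!\,a^{m-1}$ raises the degree to $|\tilde{\lambda'}|+m-1 = |\tilde\lambda|$ exactly. Finally, I need the identity $[a^{N+1}]\,\partial(\beta) = [a^{N}]\,\beta(a,0)$ for $\beta$ homogeneous of degree $N+1$ — which holds because $\partial(h_{N+1}) = h_N$ and $\partial$ kills every $s_{k,l}$ with $l\geq 1$ up to lower $s$-terms, so the $h_{N+1}$-coefficient of $\beta$ (which is $[a^{N+1}]\beta(a,0)$... wait, that's degree $N+1$) — I should be careful here: the cleanest route is to note $\coef(s_{|\tilde\lambda|},\partial(\gamma))$ equals the coefficient of $s_{|\tilde\lambda|+1,0}=h_{|\tilde\lambda|+1}$ in $\gamma$, since $\partial(h_{k})=h_{k-1}$ and $\partial$ sends $\{s_{k,l}:l\geq 1\}$ into the span of $\{s_{k',l'}:l'\geq 1\}$ (divided differences preserve divisibility by $(a-b)$-related symmetric structure appropriately). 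Thus
\[
P_\lambda(d) = \frac{1}{e_m}\cdot (m-1)!\cdot \coef\big(h_{|\tilde{\lambda'}|},\,[\,\overline Y_{\lambda'}(d')]_{m/d'}(a,0)\text{-part}\big) = \frac{(m-1)!}{e_m}\cdot P_{\lambda'}(d')\cdot(\text{scaling factor}).
\]
The scaling factor from $a\mapsto(1+\tfrac{m}{d'})a$ acting on a degree-$|\tilde{\lambda'}|$ polynomial is $(1+\tfrac{m}{d'})^{|\tilde{\lambda'}|}=\big(\tfrac{d}{d'}\big)^{|\tilde{\lambda'}|}$; but since $[\,\overline Y_{\lambda'}(d')]_{m/d'}$ is a genuine cohomology class (by \eqref{eq:shift}) one checks that $[a^{|\tilde{\lambda'}|}]$ of its $b=0$ specialization is $P_{\lambda'}(d')$ up to this explicit rational factor, and the hypergeometric/falling-factorial arithmetic collapses cleanly.

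By the induction hypothesis $P_{\lambda'}(d') = \frac{1}{(e_m-1)!\prod_{i=2}^{m-1}(e_i!)}\,d'(d'-1)\cdots(d'-|\lambda'|+1)$, and $|\lambda'| = |\lambda|-m$, so $d'(d'-1)\cdots(d'-|\lambda'|+1) = (d-m)(d-m-1)\cdots(d-|\lambda|+1)$. Combining with the prefactors gives
\[
P_\lambda(d) = \frac{1}{e_m}\cdot\frac{1}{(e_m-1)!\prod_{i=2}^{m-1}(e_i!)}\cdot d(d-1)\cdots(d-m+1)\cdot(d-m)\cdots(d-|\lambda|+1) = \frac{1}{\prod_{i=2}^m(e_i!)}\,\frac{d!}{(d-|\lambda|)!},
\]
where the $m$ new linear factors $d(d-1)\cdots(d-m+1)$ come precisely from tracking the product $\prod_{i=0}^{m-1}(ia+(d-i)b)$ under $\partial$ together with the $(d/d')$-scaling — this bookkeeping is the one place demanding care. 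I expect the main obstacle to be exactly this: verifying that the rational scaling factor $(d/d')^{|\tilde{\lambda'}|}$ coming from the shift operator $(\cdot)_{m/d'}$, combined with the $\partial$ of the explicit product $\prod_{i=0}^{m-1}(ia+(d-i)b)$, conspires to produce precisely the $m$ missing falling-factorial factors $d(d-1)\cdots(d-m+1)$ and nothing else. Once that telescoping is confirmed, the induction closes and the geometric interpretation (taking $n=|\tilde\lambda|+2$, so $i=|\tilde\lambda|$ means passing through a generic point, i.e. lying in the Schubert class dual to $s_{|\tilde\lambda|}$) follows from Remark \ref{rmrk_ndi} and Proposition \ref{Y-and-pluecker}.
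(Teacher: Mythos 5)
Your overall strategy---induction on the length of $\lambda$ via Theorem \ref{recursion4Y}, isolating the coefficient of $s_{|\tilde\lambda|}$---is the same as the paper's, but the mechanism you use to extract that coefficient does not work as written, and the step you yourself flag as ``the main obstacle'' is exactly where the proof lives; it is left unverified and, as sketched, is incorrect. Concretely: (1) setting $b=0$ inside the argument of $\partial$ is not legitimate, since specialization does not commute with the divided difference; moreover at $b=0$ the product $\prod_{i=0}^{m-1}\big(ia+(d-i)b\big)$ vanishes identically because of its $i=0$ factor $db$, so it is not $(m-1)!\,a^{m-1}$, and the ensuing degree count collapses. (2) Your ``cleanest route'' identity, that $\coef\!\big(s_{|\tilde\lambda|},\partial(\gamma)\big)$ equals the $h_{|\tilde\lambda|+1}$-coefficient of $\gamma$, is ill-posed: the argument $\gamma=\left[\,\overline Y_{\lambda'}(d')\right]_{m/d'}\prod_{i=0}^{m-1}\big(ia+(d-i)b\big)$ is not symmetric in $a,b$ (if it were, $\partial\gamma$ would be $0$), so it has no expansion in Schur or complete symmetric polynomials. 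The correct extraction rule is monomial-wise: $\coef\!\big(s_N,\partial(\gamma)\big)=[b^{N+1}]\gamma-[a^{N+1}]\gamma$, because every monomial of $\gamma$ divisible by $ab$ contributes only to classes $s_{k,l}$ with $l\ge 1$. (3) Your final bookkeeping, the factor $(m-1)!\cdot(d/d')^{|\widetilde{\lambda'}|}$, does not produce $d(d-1)\cdots(d-m+1)$; for $\lambda=(2,2)$ your recursion would yield $\tfrac12 d(d-3)$ instead of $\tfrac12 d(d-1)(d-2)(d-3)$. (Note also that $\left[\,\overline Y_{\lambda'}(d')\right]\big(\tfrac{d}{d'}a,\tfrac{m}{d'}a\big)$ is not a homogeneous rescaling of the class, so no single factor $(d/d')^{|\widetilde{\lambda'}|}$ can appear.)

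The missing idea is to work modulo $a$ rather than modulo $b$. Since the twist $(\cdot)_{m/d'}$ shifts both Chern roots by a multiple of $a$, one has $\left[\,\overline Y_{\lambda'}(d')\right]_{m/d'}=\left[\,\overline Y_{\lambda'}(d')\right]+a\cdot(\cdots)$, while $\prod_{i=0}^{m-1}\big(ia+(d-i)b\big)=d(d-1)\cdots(d-m+1)\,b^m+ab\cdot(\cdots)$. All cross terms are divisible by $ab$, hence after $\partial$ they land in the span of $\{s_{k,l}:l\ge1\}$; the main term contributes $\frac1{e_m}\left[\,\overline Y_{\lambda'}(d')\right]d(d-1)\cdots(d-m+1)\,s_{m-1}$, and its $s_{|\tilde\lambda|}$-coefficient equals the $s_{|\widetilde{\lambda'}|}$-coefficient of $\left[\,\overline Y_{\lambda'}(d')\right]$ because $s_{|\widetilde{\lambda'}|}s_{m-1}=s_{|\tilde\lambda|}+ab\cdot(\cdots)$. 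This gives the recursion $\Pl_{\lambda;|\tilde\lambda|}(d)=\frac1{e_m}\,d(d-1)\cdots(d-m+1)\,\Pl_{\lambda';|\widetilde{\lambda'}|}(d-m)$, and the induction closes; this is precisely the paper's argument, which your proposal approaches but does not actually carry out.
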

\begin{proof} We use induction on $|\lambda|$. Write the restriction of the twisted class, \eqref{eq:shift},
  as
\begin{equation}\label{eq_Ylambdasubstituted_as_sum}
\left[ \,\overline{Y}_{\lambda'}(d')\right]_{m/d'}=
\sum_{t=0}^{|\widetilde{\lambda'}|} \left( \frac{m}{d'}a \right)^t q_t(a,b,d') ,
\end{equation}
where $q_t \in \mathbb{Q}\left[ a,b,d' \right]$ is symmetric in $a,b$. Note that
$q_0=\left[ Y_{\lambda'}(d') \right]$.

As $m \ge 2$
\[ \prod_{i=0}^{m-1} \big( ia+(d-i)b \big) =d(d-1)\dots(d-m+1)b^m+ab\cdot p(a,b,d)\]
for some $p \in \mathbb{Z}[a,b,d]$. Then
\begin{equation*}
  \begin{split}
    \left[\,\overline{Y}_\lambda(d)\right]
    &=\frac{1}{e_m} \partial\left( \left[ \,\overline{Y}_{\lambda'}(d') \right]_{m/d'}
    \prod_{i=0}^{m-1}  \big( ia+ (d-i)b \big)  \right) 
    \\
    &=\frac{1}{e_m} \partial\left(
      \sum_{t=0}^{|\widetilde{\lambda'}|} \left( \left( \frac{m}{d'}a \right)^t q_t(a,b,d')\right)
    \big( d(d-1)\dots(d-m+1)b^m+ab\cdot p(a,b,d) \big) \right)
    \\
    &=\frac{1}{e_m} \partial\big( q_0(a,b,d') \cdot d (d-1)\dots (d-m+1) b^m  + ab \cdot r(a,b,d)
    \big)\\
    &= \frac{1}{e_m} \left[ \,\overline{Y}_{\lambda'}(d') \right] d (d-1) \dots(d-m+1) \partial(b^m) +
    \frac{1}{e_m} ab\cdot \partial\big( r(a,b,d) \big)
  \end{split}
\end{equation*}
for some $r \in \mathbb{Q}\left[ a,b,d \right]$. Using the induction hypothesis,
\[ \left[ \,\overline{Y}_{\lambda'}\left( d' \right) \right] =\frac{1}{\prod_{i=0}^{m-1}(e_i!) (e_m-1)!}
 (d-m)(d-m-1)\dots(d-m-|\lambda'|+1)
 s_{|\widetilde{\lambda'}|} + \dots \]
and that $s_{|\widetilde{\lambda'}|} \partial(b^m)=s_{|\widetilde{\lambda'}|}
s_{m-1}=s_{|\tilde{\lambda}|}$ is the only Schur polynomial not divisible by $ab$, we get
 the result.
\end{proof}

Note in particular, that the $d$-degree of $\Pl_{\lambda,|\tilde{\lambda}|}(d)$ reaches
$|\lambda|$, the highest possible by Theorem \ref{deg=la}.
The idea of writing $\left[ \,\overline{Y}_{\lambda'}(d') \right]_{m/d'}$ as in
(\ref{eq_Ylambdasubstituted_as_sum}) can be carried further to calculate the
exact $d$-degrees of all Pl\"ucker numbers:

\begin{theorem} \label{pl-la-drop2}
  Let $\lambda_1$ be the largest number in the partition $\lambda$. Then
  \[ \twocase{\deg\big(\Pl_{\lambda;|\tilde{\lambda}|-2j}(d)\big)=}
    {|\lambda|}{j\leq |\tilde{\lambda}|-\lambda_1+1,}
  {|\lambda|-(j- |\tilde{\lambda}|+\lambda_1-1)}{j> |\tilde{\lambda}|-\lambda_1+1.} \]
\end{theorem}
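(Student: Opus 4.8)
The plan is to split into two regimes according to whether $j \le j_0$ or $j > j_0$, where throughout I write $N := |\tilde\lambda|$, let $m := \lambda_1$ be the largest part, and set $j_0 := N - m + 1$, so that the claim reads
\[ \deg\big(\Pl_{\lambda;N-2j}(d)\big) = |\lambda| - \max(0,\,j-j_0) \qquad (0 \le j \le t := \lfloor N/2 \rfloor). \]
The first regime will follow at once from the leading term of $[\overline Y_\lambda(d)]$ computed in Theorem \ref{fundclass-leading}; the second requires unwinding the recursion of Theorem \ref{recursion4Y} by one step and then a monomial-coefficient bookkeeping.

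For $j \le j_0$: by Theorem \ref{fundclass-leading} the coefficient of $d^{|\lambda|}$ in $[\overline Y_\lambda(d)]$ is $\tfrac1{\prod_i(e_i!)}h_{\tilde\lambda}$, and expanding $h_{\tilde\lambda}$ in the Schur basis of degree-$N$ symmetric polynomials in two variables gives $h_{\tilde\lambda} = \sum_{j=0}^{t}K_{(N-j,j),\tilde\lambda}\,s_{(N-j,j)}$; hence by Proposition \ref{Y-and-pluecker} the coefficient of $d^{|\lambda|}$ in $\Pl_{\lambda;N-2j}(d)$ is $K_{(N-j,j),\tilde\lambda}/\prod_i(e_i!)$, which is exactly the assertion of Theorem \ref{thm:apl}. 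A Kostka number $K_{\mu\nu}$ is nonzero precisely when $\mu$ dominates $\nu$, and $(N-j,j)$ dominates $\tilde\lambda$ iff $N - j \ge \tilde\lambda_1 = m-1$, i.e. iff $j \le j_0$. So for $j \le j_0$ this leading coefficient is nonzero, and since Theorem \ref{deg=la} bounds the $d$-degree of every Schur coefficient of $[\overline Y_\lambda(d)]$ by $|\lambda|$, we get $\deg\big(\Pl_{\lambda;N-2j}(d)\big) = |\lambda|$.

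For $j > j_0$ I would return to $[\overline Y_\lambda(d)] = \tfrac1{e_m}\partial(g_1 g_2)$ with $g_1 := [\overline Y_{\lambda'}(d')]_{m/d'}$ and $g_2 := \prod_{i=0}^{m-1}(ia+(d-i)b)$, $d' = d-m$, both of which are polynomials in $d$ ($g_1$ by \eqref{eq:shift} together with the argument in the proof of Theorem \ref{Y-poly}). Using that $\partial(a^pb^q)$ equals $c_2^{\min(p,q)}h_{|p-q|-1}$ up to sign ($-$ if $p>q$, $+$ if $p<q$, zero if $p=q$) and that $c_2^l h_{k-l} = s_{(k,l)}$, writing $g_1 g_2 = \sum_{p+q=N+1}c_{pq}(d)\,a^pb^q$ and comparing with Proposition \ref{Y-and-pluecker} yields the identity
\[ \Pl_{\lambda;N-2j}(d) = \tfrac1{e_m}\big(c_{j,\,N+1-j}(d) - c_{N+1-j,\,j}(d)\big). \]
The core of the argument is then to read the $d$-degrees of $c_{j,N+1-j}$ and $c_{N+1-j,j}$ off the product $g_1 g_2$. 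Put $N' := |\widetilde{\lambda'}| = N - m + 1 = j_0$. Applying Theorem \ref{fundclass-leading} to $\lambda'$ and observing that the twist $\bullet_{m/d'}$ cannot raise the $d$-degree — writing $[\overline Y_{\lambda'}(d')] = \sum_e H'_e(d')^e$ and $H'_e(a+ta,b+ta) = \sum_k c_{e,k}(a,b)t^k$ gives $g_1 = \sum_{e\ge k}c_{e,k}m^k(d-m)^{e-k}$, of $d$-degree $|\lambda'| = |\lambda|-m$ with top term only from $(e,k) = (|\lambda'|,0)$ — the top $d$-degree part of $g_1$ is a positive multiple of $h_{\widetilde{\lambda'}}d^{|\lambda'|}$. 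Since $h_{\widetilde{\lambda'}}$ is a product of polynomials $h_c(a,b) = a^c + a^{c-1}b + \cdots + b^c$, every degree-$N'$ monomial occurs in it with a strictly positive coefficient; therefore each $\coef(a^ub^{N'-u},g_1)$ with $0 \le u \le N'$ is a polynomial in $d$ of degree exactly $|\lambda'|$ with positive leading coefficient (and vanishes for $u$ outside $[0,N']$), while $P_s(d) := \coef(a^sb^{m-s},g_2)$ has degree $m-s$ with positive leading coefficient $e_s(1,\dots,m-1)$ for $0 \le s \le m-1$. Substituting $c_{pq}(d) = \sum_s \coef(a^{p-s}b^{N'-p+s},g_1)\,P_s(d)$ and isolating the smallest admissible $s$, which carries the unique top-degree term since the degree $|\lambda'|+(m-s)$ strictly decreases in $s$, a short computation with the index ranges gives $\deg c_{j,N+1-j} = |\lambda| - (j-j_0)$ and $\deg c_{N+1-j,j} = |\lambda| - m + j$, whose difference is $N+1-2j \ge 1$ for $j \le t$ (this also forces $j < m$, which is what keeps the $s$-ranges as stated). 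Hence no cancellation occurs and $\deg\big(\Pl_{\lambda;N-2j}(d)\big) = |\lambda| - (j-j_0)$.

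The hard part is purely the bookkeeping in the second regime: I would need to (a) confirm carefully that $g_1$, despite the $d-m$ denominators introduced by the twist, is genuinely a polynomial of $d$-degree $|\lambda'|$ with the claimed positive leading symmetric polynomial, and (b) track the admissible ranges of $s$ in $c_{pq} = \sum_s(\cdots)$ for the two monomials $a^jb^{N+1-j}$ and $a^{N+1-j}b^j$ and check in each that the extremal term is nonzero and survives. Point (b) is exactly where strict positivity of all the leading coefficients in play is needed, and it is also the reason the regime $j \le j_0$ must be treated differently: there $c_{j,N+1-j}$ and $c_{N+1-j,j}$ can both attain degree $|\lambda|$ and genuinely compete, and the resulting cancellation is resolved not by positivity but by the Kostka/dominance criterion of the second paragraph.
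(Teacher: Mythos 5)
Your proposal is correct, and it follows essentially the route the paper itself indicates: the paper does not prove Theorem \ref{pl-la-drop2} in the text (it defers the ``straightforward but laborious'' bookkeeping to the cited reference \cite{juhasz2024leadingtermsofplucker}), but the strategy it points to --- carrying further the expansion \eqref{eq_Ylambdasubstituted_as_sum} of the twisted class through one step of the recursion of Theorem \ref{recursion4Y} --- is exactly what you do. Your two regimes check out: for $j\leq j_0$ the Kostka/dominance criterion applied to Theorem \ref{thm:apl} gives a nonzero $d^{|\lambda|}$-coefficient, and for $j>j_0$ your degree counts $\deg c_{j,N+1-j}=|\lambda|-(j-j_0)$ and $\deg c_{N+1-j,j}=|\lambda|-m+j$, with gap $N+1-2j\geq 1$ and all leading coefficients positive, are verified (the restriction to $e\geq k$ in your expansion of $g_1$ is harmless because polynomiality in $d$, known from Theorem \ref{Y-poly} and \eqref{eq:shift}, forces the negative powers of $d'$ to cancel).
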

In other words, $\Pl_{\lambda;|\tilde{\lambda}|-2j}(d)$ has degree $|\lambda|$ for $j=0,\dots,|\tilde{\lambda}|-\lambda_1+1$, then by increasing $j$ by one, the degree
drops by one.

The proof---which is straightforward but laborious---can be found in \cite{juhasz2024leadingtermsofplucker}.
\begin{example}
For $\lambda=(10,2,2)$ we have $|\lambda|=14$, $|\tilde{\lambda}|=11$, $\lambda_1=10$
and $|\tilde{\lambda}|-\lambda_1+1=2$, implying
\[ \deg\big(\Pl_{10,2,2;11}(d)\big)=\deg\big(\Pl_{10,2,2;9}(d)\big)=\deg\big(\Pl_{10,2,2;7}(d)\big)=14, \]
and
\[ \deg\big(\Pl_{10,2,2;5}(d)\big)=13,\ \ \deg\big(\Pl_{10,2,2;3}(d)\big)=12,\ \ \deg\big(\Pl_{10,2,2;1}(d)\big)=11.\]
\end{example}

If $\lambda_1$ is not much bigger than the other $\lambda_i$---exactly
if $\lambda_1\leq \lceil|\tilde{\lambda}|/2\rceil +1$---then all Pl\"ucker numbers have degree
$|\lambda|$. We saw this in Example \ref{pl22} and \ref{pl22;2} for the bitangents:
both $\Pl_{2,2;0}(d)$ and $\Pl_{2,2;2}(d)$ have degree $|\lambda|=4$.
A slightly bigger example is $\lambda=(4,3,2)$, where all Pl\"ucker numbers have degree $|\lambda|=9$.

\section{The class of \texorpdfstring{$m$}{m}-flexes} \label{sec:m-flexes} This is a family of Pl\"ucker problems where closed formulas can be given. The formula for the equivariant classes $\left[\,\overline  Y_m(d)\right]$ were already computed in \cite{kirwan} and, more explicitely, in \cite[Ex. 3.7 (4)]{fnr-root}. However, deduction of the corresponding Pl\"ucker numbers given below is new.

Using \eqref{Y_m}, we can write $\left[\,\overline Y_m(d)\right]=
\partial \left( \prod_{i=0}^{m-1}\big( ia+(d-i)b \big)  \right)$ in the Schur polynomial basis:

\begin{equation*}\label{eq:m-flex}
  \begin{split}
    \prod_{i=0}^{m-1}\big( ia+(d-i)b \big) =&\prod_{i=0}^{m-1}\big(i(a-b)+db\big)=(db)^m \prod_{i=0}^{m-1} \left( 1+i\frac{a-b}{db} \right) \\
                            =&\sum_{k=0}^{m-1}\sigma_k(1,2,\dots,m-1)(a-b)^k(db)^{m-k},
  \end{split}
\end{equation*}
where $\sigma_k$ denotes the $k$-th elementary symmetric polynomial. By definition, we have a connection with the Stirling numbers of the first kind:
\[ \sigma_k(1,2,\dots,m-1)=\stir{m}{m-k}.  \]

Almost by definition, we have
\[ \partial \left( a^ib^{m-i} \right) = s_{m-i-1,i},\]
where the Schur polynomials in the right-hand side are indexed by vectors of integers. Using the straightening law, we can restrict ourselves to Schur polynomials indexed by partitions:

\begin{equation*} \label{divdiff-schur}
 \twocase{\partial\left( a^ib^{m-i}\right)=} {s_{m-i-1,i}}{2i<m,}{-s_{i-1,m-i}}{2i>m}
\end{equation*}
and $\partial \left(a^ib^i\right)=0$.

This implies that
\begin{theorem}\label{m-flex-coeffs}
For $m\geq 2i+1$ the coefficient of $d^{m-k}s_{m-i-1,i}$ in $\left[\,\overline Y_m(d)\right]$ is
\begin{multline*}
\coef\!\big(d^{m-k}s_{m-i-1,i},\left[\,\overline Y_m(d)\right]\big)=\\
\begin{cases}
  (-1)^{k+i}\dbinom{k}{i}\stir{m}{m-k}  & \text{if }i\leq k<m-i, \\
  \left( (-1)^{k+i}\dbinom{k}{i}- (-1)^{k+m-i} \dbinom{k}{m-i}\right)\stir{m}{m-k} & \text{if }
m-i\leq k < m.
\end{cases}
\end{multline*}

\end{theorem}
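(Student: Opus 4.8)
The plan is to feed the explicit monomial expansion of $\prod_{i=0}^{m-1}\big(ia+(d-i)b\big)$ recorded just above the theorem into the divided difference operator $\partial$, and then bookkeep which monomials contribute to a prescribed basis element $s_{m-i-1,i}$.

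First I would start from
\[ \prod_{i=0}^{m-1}\big( ia+(d-i)b \big) =\sum_{k=0}^{m-1}\stir{m}{m-k}\, d^{m-k}(a-b)^k b^{m-k}, \]
which is the computation preceding the theorem together with $\sigma_k(1,\dots,m-1)=\stir{m}{m-k}$, and expand each factor by the binomial theorem as $(a-b)^k b^{m-k}=\sum_{l=0}^{k}(-1)^{k-l}\binom kl a^l b^{m-l}$. Since $\partial$ is $\mathbb{Z}[d]$-linear, computing $\left[\,\overline Y_m(d)\right]$ reduces to applying the monomial rules recorded above: $\partial(a^l b^{m-l})=s_{m-l-1,l}$ for $2l<m$, $\partial(a^l b^{m-l})=-s_{l-1,m-l}$ for $2l>m$, and $\partial(a^l b^l)=0$.

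Next, fix $i$ with $m\geq 2i+1$, so that $2i<m<2(m-i)$; in particular $(m-i-1,i)$ is a genuine partition and the two indices $l=i$ and $l=m-i$ are distinct and produce Schur polynomials already in straight form (no straightening law needed). Because $\mu\mapsto s_\mu$ is injective on partitions of $m-1$, and $\partial(a^l b^{m-l})$ involves the partition $(m-l-1,l)$ when $2l<m$ and $(l-1,m-l)$ when $2l>m$, the only monomials in the entire sum that feed into $s_{m-i-1,i}$ are: (i) $a^i b^{m-i}$, which occurs in the $k$-th summand with coefficient $(-1)^{k-i}\binom ki$ and contributes $+s_{m-i-1,i}$, and is present exactly when $i\le k$; and (ii) $a^{m-i}b^{i}$, which occurs with coefficient $(-1)^{k-(m-i)}\binom{k}{m-i}$ and, since $2(m-i)>m$, contributes $-s_{m-i-1,i}$, and is present exactly when $m-i\le k$. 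Restoring the common factor $d^{m-k}\stir{m}{m-k}$ and splitting into the ranges $i\le k<m-i$ (only (i) survives) and $m-i\le k<m$ (both survive) gives the two displayed cases, after rewriting $(-1)^{k-i}=(-1)^{k+i}$ and $(-1)^{k-(m-i)}=(-1)^{k+m-i}$ (using $(-1)^{2i}=1$ and $(-1)^{-m}=(-1)^m$).

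The argument is elementary and I do not anticipate any genuine obstacle; the points that demand care are checking that $2i<m$ makes the direct term $\partial(a^i b^{m-i})$ already straight, ruling out any further monomial $a^l b^{m-l}$ that could land on $s_{m-i-1,i}$, and getting the sign in contribution (ii) correct, which I expect to be the most error-prone step.
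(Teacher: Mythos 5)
Your proposal is correct and is exactly the computation the paper intends: it expands $\prod_{i=0}^{m-1}(ia+(d-i)b)=\sum_k \stir{m}{m-k}d^{m-k}(a-b)^k b^{m-k}$, applies the monomial rules for $\partial(a^l b^{m-l})$, and collects the two contributions $l=i$ and $l=m-i$ (the paper states the theorem as an immediate consequence of these same two displayed identities). The sign bookkeeping and the observation that only these two exponents can land on $s_{m-i-1,i}$ when $2i<m$ are handled correctly, so nothing is missing.
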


Note that adding up the above coefficients for $i=0$,
\[ \Pl_{m;m-1}=\coef(s_{m-1},\left[ \,\overline{Y}_m(d) \right])=\sum_{k=0}^{m-1}(-1)^k \stir{m}{m-k}
d^{m-k}=d(d-1)\dots(d-m+1), \]
we get back Theorem \ref{plpoint} in the $\lambda=\left( m \right)$ special case.
In other words,
\begin{proposition}
  For a generic degree $d$ hypersurface in $\P(\C^n)$, the number of
  $(n-1)$-flex lines through a generic point of $\P(\C^n)$ is $d(d-1)\cdots(d-n+2)$.
\end{proposition}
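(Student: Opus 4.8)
The plan is to read the statement as the $\lambda=(m)$ specialization of Theorem \ref{plpoint} with $m=n-1$, and to verify it directly from the closed formula \eqref{Y_m}. First I would do the geometric bookkeeping. For $\lambda=(m)$ we have $\tilde\lambda=(m-1)$, hence $|\tilde\lambda|=m-1$ and $\prod_{i=2}^m(e_i!)=1$. An $m$-flex line of a degree $d$ hypersurface in $\P(\C^n)$ is exactly a $\lambda$-line in the sense of the paper, and the lines of $\P(\C^n)$ through a fixed generic point form a subvariety of $\gr_2(\C^n)$ of codimension $n-2$; so the point condition imposes linear conditions of total codimension $n-2$, and matching this with $|\tilde\lambda|=m-1$ via $|\tilde\lambda|+i=2(n_0-2)$ forces $i=|\tilde\lambda|$ and $n_0=|\tilde\lambda|+2=m+1=n$. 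Thus the number in question is precisely the Pl\"ucker number $\Pl_{(m);\,|\tilde\lambda|}(d)$, which by Proposition \ref{Y-and-pluecker} equals $\coef(s_{m-1},[\overline Y_m(d)])$, the coefficient of the top (single-row) Schur polynomial, i.e.\ the $j=0$ term.

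To evaluate this coefficient I would use \eqref{Y_m}, writing $[\overline Y_m(d)]=\partial\big(\prod_{i=0}^{m-1}(ia+(d-i)b)\big)$, and extract the part that is a multiple of $s_{m-1}$ by reducing modulo $c_2$: among the degree-$(m-1)$ Schur polynomials, $s_{m-1}$ is the unique one not divisible by $c_2$ (since $s_{(k,l)}(a,b)=(ab)^l s_{k-l}(a,b)$ in two Chern roots). Separating the monomials divisible by $ab$ gives $\prod_{i=0}^{m-1}(ia+(d-i)b)=d(d-1)\cdots(d-m+1)\,b^m+ab\cdot Q(a,b,d)$ for some $Q\in\Z[a,b,d]$, whence, applying $\partial$, $[\overline Y_m(d)]=d(d-1)\cdots(d-m+1)\,s_{m-1}+c_2\cdot\partial(Q)$. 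Therefore $\coef(s_{m-1},[\overline Y_m(d)])=d(d-1)\cdots(d-m+1)$. Substituting $m=n-1$ yields $d(d-1)\cdots(d-n+2)$, as claimed. (Alternatively, one can sum the $i=0$ coefficients of Theorem \ref{m-flex-coeffs} to get $\sum_{k=0}^{m-1}(-1)^k\stir{m}{m-k}d^{m-k}$ and recognize this alternating Stirling-number sum as the falling factorial; or simply quote Theorem \ref{plpoint} for $\lambda=(n-1)$, whose statement already includes the geometric rephrasing as $\lambda$-lines through a generic point of $\P(\C^{|\tilde\lambda|+2})$.)

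I do not expect a serious obstacle here: the only point that needs a little care is the translation step — correctly identifying that ``through a generic point of $\P(\C^n)$'' is exactly the full set of $|\tilde\lambda|$ linear conditions and that this pins down $n_0=n$, so that the count really is $\Pl_{(m);|\tilde\lambda|}(d)$ and not some other Pl\"ucker number. Once that is settled, everything reduces to the mod-$c_2$ observation above, which is the same computation already used in the proof of Theorem \ref{plpoint}, and which also makes transparent why only the pure $b^m$-term of $\prod_{i=0}^{m-1}(ia+(d-i)b)$ survives.
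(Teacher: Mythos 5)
Your proposal is correct and follows essentially the paper's own route: the identification of the count with $\Pl_{(n-1);\,n-2}(d)=\coef\!\big(s_{n-2},[\,\overline Y_{n-1}(d)]\big)$ is exactly how the Proposition arises in the text, and your mod-$c_2$ extraction of the $b^m$-term from $\prod_{i=0}^{m-1}\big(ia+(d-i)b\big)$ is precisely the computation in the paper's proof of Theorem \ref{plpoint} specialized to $\lambda=(m)$, while your two stated alternatives (summing the $i=0$ Stirling coefficients of Theorem \ref{m-flex-coeffs}, or quoting Theorem \ref{plpoint}) are the paper's own derivations. No gaps.
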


 \subsection{Enumerative consequences}
 Specializing Theorem \ref{m-flex-coeffs} to $m=2i+1$, we obtain
\[\coef\!\big(d^{m-k}s_{i,i},\left[\,\overline Y_m(d)\right]\big)=(-1)^{k+i}\binom{k+1}{i+1}\stir {2i+1}{2i+1-k}\]
for $k=i,i+1,\dots,m-1$,
using the  identity $\binom{k}{i}+\binom{k}{i+1}=\binom{k+1}{i+1}$. In other words,
\begin{proposition} \label{plm} The number of $m=2n-3$-flexes of a degree $d$ hypersurface in $\P(\C^n)$ is
\[\Pl_m(d)=\sum_{u=1}^{n-1}(-1)^{u+n+1}\stir mu\binom{m-u+1}{n-1}d^u.  \]
\end{proposition}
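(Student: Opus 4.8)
The plan is to extract $\Pl_m(d)$ directly from the Schur expansion of $\left[\,\overline Y_m(d)\right]$ already recorded in Theorem \ref{m-flex-coeffs} and its $m=2i+1$ specialization, via the dictionary of Proposition \ref{Y-and-pluecker}. First I would settle the bookkeeping. For $\lambda=(m)$ with $m$ odd, write $m=2i+1$; then $|\tilde\lambda|=m-1=2i$, so $\Pl_m(d)=\Pl_{m;0}(d)$ is defined for $n_0$ with $2(n_0-2)=2i$, i.e. $n_0=i+2$, and setting $m=2n-3$ as in the statement gives $i=n-2$ and $n_0=n$. By Proposition \ref{Y-and-pluecker} the $j=t=\lfloor|\tilde\lambda|/2\rfloor=i$ summand of the Schur expansion of $\left[\,\overline Y_m(d)\right]$ is $\Pl_{m;0}(d)\,s_{i,i}$ (here $|\tilde\lambda|-2j=0$ and $s_{|\tilde\lambda|-j,j}=s_{i,i}$), hence
\[ \Pl_m(d)=\coef\!\big(s_{i,i},\left[\,\overline Y_m(d)\right]\big). \]

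Next I would collect the coefficients of $s_{i,i}$ over all powers of $d$. By the specialization of Theorem \ref{m-flex-coeffs} to $m=2i+1$ stated just above,
\[ \coef\!\big(d^{m-k}s_{i,i},\left[\,\overline Y_m(d)\right]\big)=(-1)^{k+i}\binom{k+1}{i+1}\stir{2i+1}{2i+1-k}\qquad (k=i,i+1,\dots,m-1), \]
so that
\[ \Pl_m(d)=\sum_{k=i}^{m-1}(-1)^{k+i}\binom{k+1}{i+1}\stir{2i+1}{2i+1-k}\,d^{m-k}. \]

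Finally I would change the summation variable to $u=m-k$: as $k$ runs from $i$ to $m-1$, $u$ runs from $m-i=i+1=n-1$ down to $1$. Then, using $i=n-2$, I simplify termwise: $d^{m-k}=d^u$; $\stir{2i+1}{2i+1-k}=\stir{m}{m-k}=\stir{m}{u}$; $\binom{k+1}{i+1}=\binom{m-u+1}{i+1}=\binom{m-u+1}{n-1}$; and $(-1)^{k+i}=(-1)^{3i+1-u}=(-1)^{i+1}(-1)^{u}=(-1)^{n-1+u}=(-1)^{u+n+1}$. Substituting these back gives exactly
\[ \Pl_m(d)=\sum_{u=1}^{n-1}(-1)^{u+n+1}\stir{m}{u}\binom{m-u+1}{n-1}d^u, \]
which is the assertion.

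There is no real analytic content here: everything reduces to the Schur-basis formula already established for $\left[\,\overline Y_m(d)\right]$ together with Proposition \ref{Y-and-pluecker}. The only point requiring care is the elementary reindexing in the last step—keeping the sign $(-1)^{k+i}$, the binomial $\binom{k+1}{i+1}$ and the Stirling number aligned under $u=m-k$ and $i=n-2$—so I expect the \quot{hard part} to be purely cosmetic, a matter of writing out the substitution cleanly. It is also worth flagging that the clean single-sum form is available precisely because the specialization $m=2i+1$ collapses the two-term coefficient of Theorem \ref{m-flex-coeffs} into the single binomial $\binom{k+1}{i+1}=\binom ki+\binom k{i+1}$.
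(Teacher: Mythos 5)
Your proposal is correct and follows exactly the paper's route: the paper derives Proposition \ref{plm} by specializing Theorem \ref{m-flex-coeffs} to $m=2i+1$ (using $\binom ki+\binom k{i+1}=\binom{k+1}{i+1}$) and then treating the proposition as a reindexing of that formula, which is precisely your substitution $u=m-k$, $i=n-2$. You merely write out the bookkeeping (via Proposition \ref{Y-and-pluecker}) and the sign/index manipulations that the paper leaves implicit behind ``in other words.''
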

In particular, for $d=m$ we obtain
\begin{theorem} \label{hyperflex-number}
A generic degree $d=2n-3$ hypersurface in $\P(\C^n)$ possesses
  \[ \sum_{u=1}^{n-1}(-1)^{u+n+1}\stir du\binom{d-u+1}{n-1}d^u\]
  lines which intersect the hypersurface in a single point.
 \end{theorem}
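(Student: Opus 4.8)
The plan is to obtain Theorem \ref{hyperflex-number} as the boundary specialization $d=m$ of Proposition \ref{plm}, preceded by a short geometric translation of the enumerative quantity involved.

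First I would unwind the phrase ``lines which intersect the hypersurface in a single point.'' A projective line $[W]\in\gr_2(\C^n)$ meets $Z_f=(f=0)$ in a single point (necessarily with multiplicity $d$) exactly when the nonzero binary form $f|_W$ of degree $d$ has only one root, i.e.\ $f|_W=f_1^d$ for a linear $f_1\colon W\to\C$. In the language of the paper this says precisely that $[W]\in\mathcal{T}_{(d)}Z_f$, the variety of tangent lines of type $\lambda=(d)$; the factor $\prod_{j=|\lambda|+1}^d f_j$ in the definition of a $\lambda$-line is empty here because $|\lambda|=d$. For this $\lambda$ we have $\tilde\lambda=(d-1)$, hence $\codim\big(\mathcal{T}_{(d)}Z_f\subset\gr_2(\C^n)\big)=|\tilde\lambda|=d-1$, and this matches $\dim\gr_2(\C^n)=2(n-2)$ exactly when $d=2n-3$, which is our hypothesis. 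So for generic $f$ the set $\mathcal{T}_{(d)}Z_f$ is finite and its cardinality is, by definition, the Pl\"ucker number $\Pl_d(d)$ (that is, $\Pl_\lambda(d)$ for $\lambda=(d)$, equivalently $\Pl_m(d)$ with $m=d$ in the notation of Section \ref{sec:m-flexes}); the transversality making this cardinality equal to the value of the corresponding cohomology class is provided by Appendix \ref{sec:trans}.

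Second I would substitute into the formula already in hand. Proposition \ref{plm} asserts that for $m=2n-3$ and every degree $d\geq m$,
\[ \Pl_m(d)=\sum_{u=1}^{n-1}(-1)^{u+n+1}\stir mu\binom{m-u+1}{n-1}d^u. \]
The minimal admissible degree is $d=|\lambda|=m$ (and by Theorem \ref{pl-poly} the Pl\"ucker number agrees with this polynomial there), so we may put $d=m$; replacing $m$ by $d$ throughout in the Stirling numbers, the binomial coefficients and the powers then gives
\[ \Pl_d(d)=\sum_{u=1}^{n-1}(-1)^{u+n+1}\stir du\binom{d-u+1}{n-1}d^u, \]
which is exactly the asserted count. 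I would close by checking the two sanity cases recorded after the statement: $n=3$ should return $9$, and $n=4$ should return $575$.

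The algebra is trivial, so the only real work---and hence where any obstacle lies---is in the two interpretive points. The first is confirming that at the extreme value $d=m$ a type $(m)$ line really coincides with a line meeting $Z_f$ in one point; this is true precisely because $|\lambda|=m=d$ forces the ``simple-root'' part of the factorization of $f|_W$ to be empty, something that would fail for $d>m$. The second is confirming that $\Pl_m(d)$ evaluated at $d=m$ is the genuine enumerative number and not merely a value of a polynomial continuation---which is guaranteed by the definition of $\Pl_\lambda(d)$ for $d\geq|\lambda|$ together with the transversality results of Appendix \ref{sec:trans}. Once these are in place the theorem is immediate.
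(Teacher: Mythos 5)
Your proposal is correct and is essentially the paper's own argument: Theorem \ref{hyperflex-number} is obtained by specializing Proposition \ref{plm} at $d=m$, together with the observation that a type-$(d)$ tangent line (the case $|\lambda|=d$, empty simple-root factor) is exactly a line meeting $Z_f$ in a single point. Your extra remarks on admissibility of $d=|\lambda|$ and transversality only make explicit what the paper leaves implicit, so there is nothing to correct.
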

 For $n=3$ it says that a generic cubic plane curve has 9 flexes. For $n=4$ we obtain the classical result that a generic quintic surface has 575 lines which intersect the surface in a single point (see e.g. \cite[Thm.~11.1]{eh3264}). For $n=5,6, 7, 8,9$ we get
 \[                              99715,\ 33899229,\ 19134579541,\ 16213602794675,\ 19275975908850375. \]

 \subsection{Lines on a hypersurface}\label{subsec_linesonhypersurface}

 After discovering Theorem \ref{hyperflex-number}, we found a formula of Don Zagier
 \cite[p.~26]{grunberg-moree-zagier} on the classical problem of counting lines on hypersurfaces. Comparing his formula with Theorem \ref{hyperflex-number} shows a surprisingly simple connection between the two problems. In this section we prove this connection directly and we also generalize it. As a byproduct, we obtain a new proof of Zagier's result.

The Fano variety $F_f$ of lines on a degree $d$  hypersurface $Z_f\subset \P(\C^{n+1})$ is the zero locus of the section $\sigma_f:\gr_2(\C^{n+1})\to \Pol^d(S)$. Therefore, for a generic $f$ we have
  \[   \big[F_f\subset \Gr_2 (\C^{n+1})\big]=e\big(\Pol^d(S) \big).  \]
  For $d=2n-3$ we have finitely many lines, and to find their number we need to calculate the coefficient of $s_{n-1,n-1}$ in $e\big(\Pol^d(S)\big)=\prod_{i=0}^{d}\big( ia+(d-i)b \big) $.

To establish the promised connection we need the following
\begin{proposition}\label{e-yd} Written in Chern roots we have
  \[   e\!\left( \Pol^d(\C^2) \right)=dab \left[\,\overline{Y}_d(d) \right].   \]
\end{proposition}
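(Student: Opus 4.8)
The plan is to compare the two sides, both of which are already available in closed form. By the conventions of Section~\ref{sec:conventions}, the weights of $\Pol^d(\C^2)$ are $db,a+(d-1)b,\dots,da$, so in Chern roots
\[ e\!\left(\Pol^d(\C^2)\right)=\prod_{i=0}^{d}\big(ia+(d-i)b\big), \]
while the case $\lambda=(m)$ with $m=d$ of formula~\eqref{Y_m} gives
\[ \left[\,\overline Y_d(d)\right]=\partial\!\left(\prod_{i=0}^{d-1}\big(ia+(d-i)b\big)\right). \]
So it is enough to evaluate this divided difference and verify that it equals $e(\Pol^d(\C^2))/(dab)$.

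First I would set $P(a,b):=\prod_{i=0}^{d-1}(ia+(d-i)b)$ and observe that $P$ is obtained from $e(\Pol^d(\C^2))$ by deleting the single factor indexed by $i=d$, namely $da$; hence $P(a,b)=e(\Pol^d(\C^2))/(da)$. Reindexing the product by $i\mapsto d-i$ yields $P(b,a)=\prod_{j=1}^{d}(ja+(d-j)b)$, which is $e(\Pol^d(\C^2))$ with the factor indexed by $j=0$, namely $db$, deleted; that is, $P(b,a)=e(\Pol^d(\C^2))/(db)$. The same reindexing also shows that $e(\Pol^d(\C^2))$ is symmetric in $a$ and $b$, which is what legitimizes applying $\partial$.

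It then remains to compute
\[ \partial(P)=\frac{P(a,b)-P(b,a)}{b-a}
=\frac{e(\Pol^d(\C^2))}{d}\cdot\frac{\frac1a-\frac1b}{b-a}
=\frac{e(\Pol^d(\C^2))}{dab}, \]
and to multiply through by $dab$. There is no serious obstacle here: the only step needing a moment's attention is the reindexing $i\mapsto d-i$, which simultaneously identifies $P(b,a)$ with the ``$j=0$ deleted'' product and records the symmetry of $e(\Pol^d(\C^2))$, after which the identity is a one-line manipulation of the divided difference operator.
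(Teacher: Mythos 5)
Your proof is correct and follows essentially the same route as the paper: both start from formula \eqref{Y_m} with $m=d$ and the explicit weight product for $e(\Pol^d(\C^2))$, and evaluate the divided difference by elementary manipulation (the paper factors out the symmetric part $\prod_{i=1}^{d-1}(ia+(d-i)b)$ and uses $\partial(db)=d$ to get the factorized form \eqref{eq_Ymfactorized}, whereas you identify $P(a,b)$ and $P(b,a)$ as $e(\Pol^d(\C^2))$ divided by $da$ and $db$ respectively and simplify). The only cosmetic remark is that symmetry is not needed to "legitimize" applying $\partial$ — it is defined on arbitrary polynomials — but this does not affect the argument.
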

\begin{proof}
  By basic properties of the divided difference, we have
  \begin{equation}\label{eq_Ymfactorized}
  \left[ \, \overline{Y}_d(d) \right]=
  \partial\left(\prod_{i=0}^{d-1}\big( ia+(d-i)b \big) \right)=\prod_{i=1}^{d-1} \big(ia+(d-i)b\big)\partial(db)=d\prod_{i=1}^{d-1}\big( ia+(d-i)b \big) ,
\end{equation}
implying that
  \[   e\!\left( \Pol^d(\C^2) \right)=dab \left[\,\overline{Y}_d(d) \right].   \]
\end{proof}
In retrospect, this connection could have been known to the authors: The first appearance of the factorized
form (\ref{eq_Ymfactorized}) may be in \cite{balazs-tezis}. It can also be obtained by the classical
resolution method.

\begin{observation} For Schur polynomials in variables $a,b$ we have
  \[ s_{i+1,j+1}=abs_{i,j}  \]
  for all $i\geq j$.
\end{observation}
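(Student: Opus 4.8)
The plan is to reduce everything to the bialternant (Weyl character) formula in two variables. First I would recall that for integers $k\ge l\ge 0$,
\[ s_{k,l}(a,b)=\frac{a^{k+1}b^{l}-a^{l}b^{k+1}}{a-b}, \]
the quotient of the alternant by the Vandermonde $a-b$, and check that this matches the generators fixed earlier in the paper (it yields $s_1=s_{1,0}=a+b=c_1$, $s_{1,1}=ab=c_2$, $s_2=s_{2,0}=a^2+ab+b^2=c_1^2-c_2$, $s_{2,1}=ab(a+b)=c_1c_2$, etc.), so that there is no ambiguity about which of the two entries sits in the first row.

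With that in hand, the identity is a one-line computation: put $(k,l)=(i+1,j+1)$ and factor the common $ab=a^{1}b^{1}$ out of the numerator, which is allowed since both exponents $i+1,j+1$ are $\ge 1$ so the surviving exponents $i,j$ stay nonnegative; this gives
\[ s_{i+1,j+1}(a,b)=\frac{a^{i+2}b^{j+1}-a^{j+1}b^{i+2}}{a-b}=ab\cdot\frac{a^{i+1}b^{j}-a^{j}b^{i+1}}{a-b}=ab\,s_{i,j}(a,b). \]
The assumption $i\ge j$ enters only to guarantee that $(i,j)$, hence $(i+1,j+1)$, is an honest partition, so that both sides lie among the basis elements $s_{k,l}$ with $l\le k$.

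If one prefers a proof without coordinates, I would instead invoke the Pieri rule for multiplication by the top elementary symmetric function: in two variables $e_2=ab$, and the only vertical strip of size $2$ that attaches to the two-row shape $(i,j)$ has one box in each row, so $s_{i,j}\cdot e_2=s_{i+1,j+1}$. Equivalently one can argue with semistandard tableaux in the alphabet $\{1,2\}$: the first column of a tableau of shape $(i+1,j+1)$ is forced to read $1$ over $2$, and erasing it is a bijection onto tableaux of shape $(i,j)$ that rescales the monomial weight by $ab$. In any case I do not expect a genuine obstacle here; the only thing to watch is the indexing convention for $s_{k,l}$, which is already pinned down by the examples above.
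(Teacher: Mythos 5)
Your proof is correct, and the paper itself offers no argument here: the identity is stated as an immediate observation, and your bialternant computation (with the convention check $s_1=c_1$, $s_{1,1}=c_2$, $s_2=c_1^2-c_2$, matching the paper's choices) is exactly the standard justification. Equivalently, in the paper's own language you could note $s_{k,l}=\partial\left(a^{l}b^{k+1}\right)$ and pull the symmetric factor $ab$ out of the divided difference, $\partial\left(ab\cdot a^{j}b^{i+1}\right)=ab\,\partial\left(a^{j}b^{i+1}\right)$, which is the same calculation in one line.
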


This observation together with Proposition \ref{e-yd} immediately implies
\begin{corollary}
Expressing  $e\!\left(\Pol^d(\C^2)\right)$ and $\left[\,\overline Y_d(d)\right]$ in Schur basis:
\[e\!\left( \Pol^d(\C^2) \right)=\sum_{j=0}^{\lfloor(d+1)/2\rfloor}u_j s_{d+1-j,j} \ \text{ and } \
\left[\,\overline{Y}_d(d)\right]=\sum_{j=0}^{\lfloor(d-1)/2\rfloor}v_j s_{d-1-j,j},\]
we have the identities $u_{j+1}=dv_j$ for $j=0,\dots,\lfloor(d-1)/2\rfloor$ and $u_{0}=0$.
\end{corollary}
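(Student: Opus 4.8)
The plan is to read the identity straight off Proposition~\ref{e-yd} together with the Observation stated just above it, so the argument is essentially bookkeeping. First I would substitute the Schur expansion $\left[\,\overline{Y}_d(d)\right]=\sum_{j=0}^{\lfloor(d-1)/2\rfloor}v_j s_{d-1-j,j}$ into the equality $e\!\left(\Pol^d(\C^2)\right)=dab\left[\,\overline{Y}_d(d)\right]$ of Proposition~\ref{e-yd}. Every index pair occurring in this sum, namely $(d-1-j,j)$ with $0\le j\le\lfloor(d-1)/2\rfloor$, satisfies $d-1-j\ge j$, so the hypothesis $i\ge j$ of the Observation is met and $ab\cdot s_{d-1-j,j}=s_{d-j,j+1}$. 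This yields
\[
e\!\left(\Pol^d(\C^2)\right)=d\sum_{j=0}^{\lfloor(d-1)/2\rfloor} v_j\, s_{d-j,j+1}.
\]

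Next I would reindex by $j'=j+1$, which runs over $1,2,\dots,\lfloor(d-1)/2\rfloor+1=\lfloor(d+1)/2\rfloor$, giving
\[
e\!\left(\Pol^d(\C^2)\right)=d\sum_{j'=1}^{\lfloor(d+1)/2\rfloor} v_{j'-1}\, s_{d+1-j',j'}.
\]
Comparing this with the given expansion $e\!\left(\Pol^d(\C^2)\right)=\sum_{j=0}^{\lfloor(d+1)/2\rfloor}u_j s_{d+1-j,j}$, and using that the Schur polynomials $s_{k,l}$ with $l\le k$ are linearly independent (they form a $\Z$-basis of $\Z[c_1,c_2]$, as recalled in Section~\ref{sec:calc-of-pl}), I may equate the coefficient of each $s_{d+1-j,j}$. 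The index $j=0$ does not appear on the right-hand side, which forces $u_0=0$, and matching the remaining coefficients gives $u_{j'}=d\,v_{j'-1}$ for $j'=1,\dots,\lfloor(d+1)/2\rfloor$, i.e.\ $u_{j+1}=d\,v_j$ for $j=0,\dots,\lfloor(d-1)/2\rfloor$.

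There is no genuine obstacle here; the only points deserving a moment's care are verifying that the hypothesis $i\ge j$ of the Observation holds for every term produced by the expansion of $\left[\,\overline{Y}_d(d)\right]$ (it does, precisely because $j\le\lfloor(d-1)/2\rfloor$), and checking that the two summation ranges line up after the shift, which is immediate from $\lfloor(d-1)/2\rfloor+1=\lfloor(d+1)/2\rfloor$.
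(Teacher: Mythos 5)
Your argument is correct and is exactly the reasoning the paper intends when it says the Observation together with Proposition \ref{e-yd} ``immediately implies'' the corollary: multiply the Schur expansion of $\left[\,\overline{Y}_d(d)\right]$ by $dab$, apply $ab\,s_{i,j}=s_{i+1,j+1}$ term by term, shift the index, and compare coefficients in the Schur basis. The only difference is that you spell out the index bookkeeping (including $\lfloor(d-1)/2\rfloor+1=\lfloor(d+1)/2\rfloor$ and the vanishing of $u_0$) which the paper leaves implicit.
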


If $d=2n-3$, the case $u_{n-1}=dv_{n-2}$ implies

\begin{theorem} \label{lines-on-vs-hyperflexes}
The number of lines on a generic degree $d=2n-3$ hypersurface in $\P(\C^{n+1})$ is $d$ times the number of hyperflexes of a generic degree $d$ hypersurface in $\P(\C^{n})$.
\end{theorem}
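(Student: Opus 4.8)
The plan is to recognize both quantities in the statement as \emph{extreme} Schur coefficients and then to invoke the Corollary immediately preceding the theorem, whose identity $u_{j+1}=dv_j$ already carries the whole content; the algebraic engine is Proposition \ref{e-yd}.

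First I would pin down the left-hand side. For $d=2n-3$ and generic $f\in\Pol^d(\C^{n+1})$ the Fano scheme $F_f=\sigma_f^{-1}(0)\subset\gr_2(\C^{n+1})$ is finite, since $\dim\gr_2(\C^{n+1})=2(n-1)$ equals $\operatorname{rank}\Pol^d(S)=d+1$; hence the number of lines on $Z_f$ is $\int_{\gr_2(\C^{n+1})}e(\Pol^d(S))$. Now $e(\Pol^d(\C^2))=\sum_j u_j s_{d+1-j,j}$ is homogeneous of degree $d+1$ in the Chern roots, and in $H^*(\gr_2(\C^{n+1}))$ --- whose Schur basis is $\{s_{k,l}:l\le k\le n-1\}$ --- a term $s_{d+1-j,j}$ survives only if $j\ge d+1-(n-1)=n-1$; combined with $j\le\lfloor(d+1)/2\rfloor=n-1$ this leaves a single term, $u_{n-1}s_{n-1,n-1}$. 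Since $s_{n-1,n-1}$ is the point class, the number of lines on $Z_f$ equals $u_{n-1}$.

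Next I would identify the right-hand side. A hyperflex of a degree $d$ hypersurface in $\P(\C^n)$ is a $(d)$-line; for $\lambda=(d)$ one has $|\tilde\lambda|=d-1=2(n-2)$, so the hyperflexes form a finite set and their number is $\Pl_{(d);0}(d)$. By Proposition \ref{Y-and-pluecker} this is the coefficient of $s_{n-2,n-2}$ in $[\,\overline{Y}_d(d)\,]=\sum_j v_j s_{d-1-j,j}$, which --- by the same index count as above, now in $H^*(\gr_2(\C^n))$ --- is $v_{n-2}$, sitting at the extreme index $n-2=\lfloor(d-1)/2\rfloor$.

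With both sides identified, the Corollary --- itself an immediate consequence of Proposition \ref{e-yd} together with the Observation $s_{i+1,j+1}=ab\,s_{i,j}$ --- gives $u_{j+1}=dv_j$ for all admissible $j$; specializing to $j=n-2$ yields $u_{n-1}=dv_{n-2}$, which is exactly the assertion. I do not foresee a genuine obstacle: the only place that needs care is the bookkeeping --- checking that, for $d=2n-3$, the coefficients $u_{n-1}$ and $v_{n-2}$ are indeed the surviving (top-degree) Schur coefficients that compute the two geometric counts, and that $n-1$ and $n-2$ lie in the ranges $0\le j\le\lfloor(d\pm1)/2\rfloor$ demanded by the Corollary.
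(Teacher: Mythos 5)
Your proposal is correct and follows essentially the same route as the paper: both identify the line count with the coefficient $u_{n-1}$ of $s_{n-1,n-1}$ in $e(\Pol^d(S))$ and the hyperflex count with $v_{n-2}$ via Proposition \ref{Y-and-pluecker}, then specialize the identity $u_{j+1}=dv_j$ from the Corollary to Proposition \ref{e-yd}. Your write-up merely makes explicit the index bookkeeping that the paper leaves implicit.
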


\begin{remark}If would be nice to have a geometric explanation of this connection. Igor Dolgachev recommended to use cyclic coverings: let $f(x_1,\dots,x_n)\in \pol^d(\C^n)$ be generic and
\[ \tilde f(x_1,\dots,x_n,x_{n+1}):=f(x_1,\dots,x_n)+x_{n+1}^d\]
with $d=2n-3$. Then the projection $\pi:Z_{\tilde f} \to \P(\C^{n})$ has the following property: The preimage of a hyperflex to $Z_f$ is the union of $d$ lines on $Z_{\tilde f}$. The case $n=3$ is explained in \cite[Ex 9.1.24]{dolgachev}. It remains to be shown that for generic $f$ the section $\sigma_{\tilde f} :\gr_2(\C^{n+1})\to \Pol^d(S)$ is transversal to the zero section and have no other zeroes. We will not pursue this approach further in this paper.
\end{remark}

\begin{corollary}[Zagier's formula] The number of lines on a generic degree $d=2n-3$ hypersurface in $\P(\C^{n+1})$ is
 \[ \sum_{u=1}^{n-1}(-1)^{u+n+1}\stir du\binom{d-u+1}{n-1}d^{u+1}.\]
\end{corollary}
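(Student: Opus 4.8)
The plan is to obtain the corollary as an immediate consequence of the two results just proved. By Theorem \ref{lines-on-vs-hyperflexes}, the number of lines on a generic degree $d=2n-3$ hypersurface in $\P(\C^{n+1})$ equals $d$ times the number of hyperflexes of a generic degree $d$ hypersurface in $\P(\C^{n})$, and by Theorem \ref{hyperflex-number} the latter is $\sum_{u=1}^{n-1}(-1)^{u+n+1}\stir du\binom{d-u+1}{n-1}d^u$. Multiplying this expression by $d$ replaces each $d^u$ by $d^{u+1}$ and produces exactly the asserted formula. At this level the proof is a single line; all the substance sits in Theorems \ref{lines-on-vs-hyperflexes} and \ref{hyperflex-number}.

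It is also worth recording the self-contained cohomological derivation, since it is short and only uses material from Sections \ref{sec:m-flexes} and \ref{subsec_linesonhypersurface}. The number of lines is $\coef(s_{n-1,n-1},e(\Pol^d(S)))$ with $d=2n-3$, because $[F_f\subset\gr_2(\C^{n+1})]=e(\Pol^d(S))$ for generic $f$ and $s_{n-1,n-1}$ is the point class of $\gr_2(\C^{n+1})$. By Proposition \ref{e-yd} we have $e(\Pol^d(\C^2))=dab\,[\overline{Y}_d(d)]$, and by the Observation $s_{n-1,n-1}=ab\,s_{n-2,n-2}$; hence $\coef(s_{n-1,n-1},e(\Pol^d(\C^2)))=d\cdot\coef(s_{n-2,n-2},[\overline{Y}_d(d)])$. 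Since $\coef(s_{n-2,n-2},[\overline{Y}_d(d)])=\Pl_d(d)$ is the hyperflex number, it remains to insert its closed form, which is obtained by summing the coefficients of Theorem \ref{m-flex-coeffs} (equivalently, Proposition \ref{plm} at $d=m=2n-3$) to get $\Pl_d(d)=\sum_{u=1}^{n-1}(-1)^{u+n+1}\stir du\binom{d-u+1}{n-1}d^u$; multiplying by $d$ completes the argument.

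The only points requiring care — and the closest thing to an obstacle, although a minor one — are bookkeeping matters: that $d=2n-3$ is odd, so $\lfloor(d+1)/2\rfloor=n-1$ and $\lfloor(d-1)/2\rfloor=n-2$ and the relevant Schur coefficients therefore occupy the top slots of the two expansions; that $e(\Pol^d(\C^2))$ is genuinely divisible by $ab$ (in fact by $dab$), which is what legitimizes the index shift $u_{j+1}=dv_j$ and forces $u_0=0$; and the degree/parity check identifying $\coef(s_{n-2,n-2},[\overline{Y}_d(d)])$ with the hyperflex number $\Pl_d(d)$ rather than with a neighbouring Schur coefficient. None of these requires any idea beyond what is already established above, so I would present the proof essentially as the two-line combination in the first paragraph, optionally appending the cohomological route as a remark.
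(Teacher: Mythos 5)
Your proposal is correct and matches the paper's route exactly: the corollary is stated there as an immediate consequence of Theorem \ref{lines-on-vs-hyperflexes} combined with Theorem \ref{hyperflex-number}, i.e.\ multiplying the hyperflex count by $d$, which is precisely your first paragraph. Your optional second paragraph merely unpacks the same chain (Proposition \ref{e-yd}, the Observation, and Proposition \ref{plm}) that the paper already uses to prove those two theorems, so it adds no genuinely different argument.
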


The identities $u_{j+1}=dv_j$ imply the following generalization of Theorem \ref{lines-on-vs-hyperflexes}:

\begin{theorem} \label{gen-lines-on-vs-hyperflexes}
  Let $d$ be any degree and
  choose $n$ and $0 \leq i \leq d-1$ such that $d-1+i=2(n-2)$. Then the number of lines on a generic degree
  $d$ hypersurface in $\P(\C^{n+1})$ intersecting a generic $(i+1)$-codimensional projective subspace is
  $d$ times the number of hyperflexes of a generic degree $d$ hypersurface in $\P(\C^{n})$ intersecting a
  generic $(i+1)$-codimensional projective subspace.

\end{theorem}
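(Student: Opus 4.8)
The plan is to recognise both counts as Schur coefficients of cohomology classes that differ by the factor $dab$, after which the relation $u_{j+1}=d\,v_j$ recorded in the corollary just above does all the work.

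First I would turn each enumerative quantity into an intersection number on a Grassmannian of lines. On the Fano side, for generic $f\in\Pol^d(\C^{n+1})$ the Fano scheme $F_f\subset\gr_2(\C^{n+1})$ is the transverse zero locus of $\sigma_f$, so $\big[F_f\subset\gr_2(\C^{n+1})\big]=e\big(\Pol^d(S)\big)$, and the transversality results of Appendix~\ref{sec:trans}---applied to $f$ together with the auxiliary linear subspace---let me pass from the geometric count to the corresponding intersection number. The lines meeting a generic $(i+1)$-codimensional projective subspace form a Schubert cycle of codimension $i$ with class $s_i$, so the number of lines on $Z_f$ meeting such a subspace is $\int_{\gr_2(\C^{n+1})}e(\Pol^d(S))\cdot s_i$, that is, the coefficient of the point class $s_{n-1,n-1}$ in $e(\Pol^d(S))\cdot s_i$; by the Pieri rule this equals the coefficient of $s_{n-1,\,n-1-i}$ in $e(\Pol^d(\C^2))=\sum_j u_j\,s_{d+1-j,j}$, and the hypothesis $d-1+i=2(n-2)$ pins this down to be $u_{d+2-n}$. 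Entirely parallel reasoning---which amounts to Proposition~\ref{Y-and-pluecker} for $\lambda=(d)$ together with the same appendix---identifies the number of hyperflexes of a generic degree $d$ hypersurface in $\P(\C^n)$ meeting a generic $(i+1)$-codimensional subspace, namely $\Pl_{d;i}(d)$, with the coefficient $v_{d+1-n}$ of $s_{n-2,\,n-2-i}$ in $\big[\,\overline Y_d(d)\big]=\sum_j v_j\,s_{d-1-j,j}$.

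Then I would invoke Proposition~\ref{e-yd}, $e(\Pol^d(\C^2))=dab\,[\,\overline Y_d(d)]$, together with the observation that $ab\,s_{k,l}=s_{k+1,l+1}$; comparing Schur expansions gives $u_{j+1}=d\,v_j$ for all $j$, which is the content of the corollary stated above. Specialising to $j=d+1-n$---which lies in the admissible range exactly because $0\le i\le d-1$ forces $0\le d+1-n\le\lfloor(d-1)/2\rfloor$---yields $u_{d+2-n}=d\,v_{d+1-n}$, i.e.\ the number of lines equals $d$ times the number of hyperflexes, as asserted.

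The step I expect to demand the most care is the index bookkeeping rather than any conceptual point: one must check in each of the two Grassmannians that $d-1+i=2(n-2)$ makes the relevant intersection zero-dimensional, that the incidence condition is represented by the correct Schur polynomial, and that the corresponding partition fits inside the $2\times(n-1)$ (respectively $2\times(n-2)$) box so that it does not vanish in the cohomology of $\gr_2$. The most economical way to organise this is to record that $e(\Pol^d(S))$ has codimension $d+1$ while $\big[\,\overline{\mathcal{T}_{(d)}Z_g}\big]$ has codimension $d-1$: the two-unit shift between these, realised by multiplication with $dab$, is precisely what manufactures the extra factor of $d$. The transversality needed to make every intersection number count honest geometric objects is supplied by Appendix~\ref{sec:trans}, so nothing genuinely new is required there.
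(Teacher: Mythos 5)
Your proposal is correct and follows essentially the same route as the paper: the paper also derives this theorem directly from the identities $u_{j+1}=d\,v_j$, which come from Proposition \ref{e-yd} ($e(\Pol^d(\C^2))=dab\,[\,\overline Y_d(d)]$) together with the observation $ab\,s_{k,l}=s_{k+1,l+1}$, with the two sides interpreted as Schur coefficients of $e(\Pol^d(S))$ and of $[\,\overline Y_d(d)]$ (via Proposition \ref{Y-and-pluecker}) respectively. Your index bookkeeping ($u_{d+2-n}=d\,v_{d+1-n}$) and the appeal to the transversality results of Appendix \ref{sec:trans} are exactly the details the paper leaves implicit, and they check out.
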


\section{Asymptotic behaviour of the Pl\"ucker number \texorpdfstring{$\Pl_{\lambda;i}(d)$}{Pllambda;i(d)}}
\label{sec:poly-of-Pl} Theorem \ref{deg=la} implies that the  Pl\"ucker numbers $\Pl_{\lambda;i}(d)$ are polynomials in $d$ and have degree at most $|\lambda|$. In this section we calculate the coefficient of $d^{|\lambda|}$ in $\Pl_{\lambda;i}(d)$ by relating it to certain Kostka numbers.

This coefficient informs us about the asymptotic behaviour of the  Pl\"ucker number $\Pl_{\lambda;i}(d)$ as $d$ tends to infinity, so we will call it the \emph{asymptotic  Pl\"ucker number} $\apl_{\lambda;i}$ (or $\apl_{\lambda}$ for $i=0$).
Theorem \ref{pl-la-drop2} shows that for some $i$ the polynomial $\Pl_{\lambda;i}(d)$
can have degree less than $|\lambda|$. In these cases $\apl_{\lambda;i}=0$.
 For example, the number of flexes is $\Pl_3(d)=3d(d-2)$, so the coefficient of $d^3$,
$\apl_{3}$ is zero. More generally, Proposition \ref{plm} shows  that
$\deg_d(\Pl_m(d))=(m+1)/2$, so the degree of $\Pl_\lambda(d)$ can be much lower than $|\lambda|$.

Recall that Kostka numbers can be defined as coefficients of the Schur expansion of the complete symmetric polynomials:
\[ h_\nu=\sum K_{\mu,\nu}s_\mu. \]

Then the leading term formula of Theorem \ref{fundclass-leading} immediately implies

\begin{theorem}\label{thm:apl}
Let $\lambda=(2^{e_2},\dots,m^{e_m})$ be a partition without $1$'s and $j\leq \lfloor |\tilde{\lambda}|/2 \rfloor$ a nonnegative
 integer. Then
 \[ \apl_{\lambda;|\tilde{\lambda}|-2j}= \frac{K_{(|\tilde{\lambda}|-j,j),\tilde{\lambda}}}{\prod_{i=2}^m \left( e_i!\right)}. \]
\end{theorem}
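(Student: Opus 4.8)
The plan is to obtain the claim by comparing the top $d$-degree parts of the two sides of the identity of Proposition~\ref{Y-and-pluecker}. First I would recall that proposition,
\[ \left[\,\overline Y_\lambda(d)\right]=\sum_{j=0}^{\lfloor|\tilde\lambda|/2\rfloor}\Pl_{\lambda;|\tilde\lambda|-2j}(d)\,s_{|\tilde\lambda|-j,j}, \]
noting that by Theorem~\ref{deg=la} each Pl\"ucker number $\Pl_{\lambda;|\tilde\lambda|-2j}(d)$ is a polynomial in $d$ of degree at most $|\lambda|$, and that by definition $\apl_{\lambda;|\tilde\lambda|-2j}$ is its coefficient of $d^{|\lambda|}$. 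Hence, collecting the $d^{|\lambda|}$-part on both sides (legitimate termwise because of the degree bound),
\[ \coef\!\bigl(d^{|\lambda|},\left[\,\overline Y_\lambda(d)\right]\bigr)=\sum_{j=0}^{\lfloor|\tilde\lambda|/2\rfloor}\apl_{\lambda;|\tilde\lambda|-2j}\,s_{|\tilde\lambda|-j,j}. \]

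Next I would invoke Theorem~\ref{fundclass-leading}: the top $d$-degree part of $\left[\,\overline Y_\lambda(d)\right]$ is $\tfrac{1}{\prod_{i=2}^m(e_i!)}\,h_{\tilde\lambda}\,d^{|\lambda|}$, so the left-hand side above equals $\tfrac{1}{\prod_{i=2}^m(e_i!)}\,h_{\tilde\lambda}$, where $h_{\tilde\lambda}$ is the complete symmetric polynomial in the two Chern roots $a,b$. The last step is purely combinatorial: expand $h_{\tilde\lambda}$ in the Schur basis. Since we work with symmetric polynomials in the \emph{two} variables $a,b$, only Schur polynomials indexed by partitions of length at most two are nonzero, and for those the expansion coefficients are by definition the Kostka numbers, so $h_{\tilde\lambda}=\sum_{j=0}^{\lfloor|\tilde\lambda|/2\rfloor}K_{(|\tilde\lambda|-j,j),\tilde\lambda}\,s_{|\tilde\lambda|-j,j}$, the sum running over exactly the same index set as in Proposition~\ref{Y-and-pluecker}. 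Comparing coefficients of the $s_{|\tilde\lambda|-j,j}$, which are linearly independent, gives $\apl_{\lambda;|\tilde\lambda|-2j}=K_{(|\tilde\lambda|-j,j),\tilde\lambda}/\prod_{i=2}^m(e_i!)$.

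There is no real obstacle here; the theorem is genuinely an immediate consequence of Theorem~\ref{fundclass-leading}, and the only care needed is bookkeeping — checking that taking $d^{|\lambda|}$-coefficients termwise is justified by the degree bound, that the two-variable truncation of the Schur/Kostka expansion of $h_{\tilde\lambda}$ ranges over precisely the partitions $(|\tilde\lambda|-j,j)$ with $0\le j\le\lfloor|\tilde\lambda|/2\rfloor$, and that writing $n-2=|\tilde\lambda|-j$ reconciles this with the ``$K_{(n-2,j),\tilde\lambda}$'' phrasing used in the statement in the introduction. Stability in $n$ is automatic from the equivariant formulation, so nothing further is needed.
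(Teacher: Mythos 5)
Your proposal is correct and follows essentially the same route as the paper: the paper also obtains the result by combining the leading-term formula of Theorem \ref{fundclass-leading} with the Schur expansion $h_{\tilde\lambda}=\sum K_{\mu,\tilde\lambda}s_\mu$ and matching coefficients against Proposition \ref{Y-and-pluecker}. Your extra bookkeeping (termwise $d^{|\lambda|}$-coefficients via the degree bound, the two-variable truncation of the Kostka expansion, and the identification $n-2=|\tilde\lambda|-j$) is exactly the implicit content of the paper's ``immediately implies.''
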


In particular, from basic properties of the Kostka numbers (see e.g. \cite[Exercise~A.11.]{fulton-harris}) we obtain that
\begin{corollary}\label{apl=0} The asymptotic  Pl\"ucker number $\apl_\lambda$ is zero if and only if $\lambda_1\geq |\tilde{\lambda}|/2+2$.
\end{corollary}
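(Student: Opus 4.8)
The plan is to reduce the vanishing of $\apl_\lambda$ to the vanishing of a single Kostka number and then apply the classical dominance criterion. First recall that $\apl_\lambda=\apl_{\lambda;0}$ is defined only when $|\tilde\lambda|$ is even; in that case Theorem \ref{thm:apl}, applied with $j=|\tilde\lambda|/2$ (so that $|\tilde\lambda|-2j=0$), gives
\[ \apl_\lambda=\frac{K_{(|\tilde\lambda|/2,\,|\tilde\lambda|/2),\,\tilde\lambda}}{\prod_{i=2}^m(e_i!)}. \]
Since the denominator is a positive integer, $\apl_\lambda=0$ if and only if the Kostka number $K_{(|\tilde\lambda|/2,\,|\tilde\lambda|/2),\,\tilde\lambda}$ vanishes.

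Next I would invoke the standard fact (already cited, \cite[Exercise~A.11.]{fulton-harris}) that for partitions $\mu,\nu$ with $|\mu|=|\nu|$ one has $K_{\mu,\nu}\neq 0$ if and only if $\mu\ge\nu$ in the dominance order, i.e. $\sum_{i=1}^r\mu_i\ge\sum_{i=1}^r\nu_i$ for all $r$. Here $\mu=(|\tilde\lambda|/2,|\tilde\lambda|/2)$ is the two-row rectangle and $\nu=\tilde\lambda=(\lambda_1-1,\dots,\lambda_k-1)$, and indeed $|\mu|=|\tilde\lambda|=|\nu|$. For every $r\ge 2$ the dominance inequality reads $|\tilde\lambda|\ge\sum_{i=1}^r(\lambda_i-1)$, which is automatic: each $\lambda_i-1\ge 1$ and $\sum_{i=1}^k(\lambda_i-1)=|\tilde\lambda|$, so any partial sum is at most $|\tilde\lambda|$. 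Hence the only genuine constraint is the $r=1$ inequality $|\tilde\lambda|/2\ge\lambda_1-1$, equivalently $\lambda_1\le|\tilde\lambda|/2+1$.

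Putting these together, $K_{(|\tilde\lambda|/2,\,|\tilde\lambda|/2),\,\tilde\lambda}=0$ precisely when $\lambda_1>|\tilde\lambda|/2+1$; since $|\tilde\lambda|/2$ is an integer this is the same as $\lambda_1\ge|\tilde\lambda|/2+2$, which is the assertion. There is no real obstacle here — the argument is essentially immediate once Theorem \ref{thm:apl} is in hand. The only point needing a little care is to note that $\apl_\lambda$ is defined only for $|\tilde\lambda|$ even, so that $(|\tilde\lambda|/2,|\tilde\lambda|/2)$ is a bona fide partition and the final bound can legitimately be written with $\ge$.
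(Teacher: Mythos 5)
Your argument is correct and is exactly the route the paper takes: Corollary \ref{apl=0} is deduced from Theorem \ref{thm:apl} with $j=|\tilde\lambda|/2$ together with the standard nonvanishing criterion for Kostka numbers (dominance order, the cited \cite[Exercise~A.11.]{fulton-harris}), which for the two-row shape reduces to the single inequality $\lambda_1-1\le|\tilde\lambda|/2$. You merely spell out the dominance check that the paper leaves implicit, so there is nothing to add.
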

Note that this can also be easily deduced from Theorem \ref{pl-la-drop2}.

\begin{remark}Kostka numbers have an interpretation as  solutions to linear Schubert problems:
Let $n=|\mu|-j+2$, then the Kostka number $K_{(n-2,j),\mu}$ is the number of lines in
$\mathbb{P}(\mathbb{C}^n)$ intersecting generic subspaces of codimension
$\mu_1+1,\dots,\mu_k+1$ and $n-j-1$.

For example, the number of 4-tangents is
\[ \Pl_{2^4}(d)=\frac1{12}d (d-7)(d-6)(d-5)(d-4)(d^3 + 6 d^2 + 7 d - 30),\]
therefore $\apl_{2^4}=\frac1{12}=\frac2{4!}$, and this 2 can be interpreted as
the solution of the famous Schubert problem: how many lines intersect four generic lines in $\P^3$?
\end{remark}

More generally, for $\lambda=2^{2(n-2)}$, we have
\[K_{(n-2)^2,1^{2(n-2)}}= C(n-2),\]
where $C(n)$ denotes the $n$-th Catalan number, e.g. the number of standard
Young tableaux for the 2-by-$n$ rectangle. This implies
\begin{proposition} \label{multi-tangent}
 \[ \apl_{2^{2(n-2)}}=\frac{C(n-2)}{(2n-4)!}=\frac{1}{(n-2)!(n-1)!}. \]
\end{proposition}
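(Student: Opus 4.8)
The plan is to read off $\apl_{2^{2(n-2)}}$ directly from Theorem \ref{thm:apl} and then evaluate the Kostka number that appears. For $\lambda = 2^{2(n-2)}$ there is a single part size $m=2$ with multiplicity $e_2 = 2(n-2)$, and the reduction is $\tilde{\lambda} = (1^{2(n-2)})$, so $|\tilde{\lambda}| = 2(n-2)$. Since we want the coefficient $\apl_\lambda = \apl_{\lambda;0}$, we take $j = |\tilde{\lambda}|/2 = n-2$ (which satisfies $j \le \lfloor |\tilde{\lambda}|/2 \rfloor$); then $(|\tilde{\lambda}|-j,\, j) = (n-2,\, n-2)$ and $\prod_{i=2}^m (e_i!) = (2n-4)!$. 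Plugging into Theorem \ref{thm:apl} gives
\[ \apl_{2^{2(n-2)}} = \frac{K_{(n-2,n-2),\, 1^{2(n-2)}}}{(2n-4)!}. \]

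Next I would identify the Kostka number. Because the content is the all-ones partition $1^{2(n-2)}$, the semistandard tableaux of shape $(n-2,n-2)$ with this content are exactly the \emph{standard} Young tableaux of the $2\times(n-2)$ rectangle. Their number is the Catalan number $C(n-2)$ --- the classical fact already recalled in the statement, provable for instance by the hook length formula or by the bijection with ballot sequences. Substituting yields the first asserted equality $\apl_{2^{2(n-2)}} = C(n-2)/(2n-4)!$.

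It then remains to do the elementary algebra turning $C(n-2)/(2n-4)!$ into $1/\big((n-2)!\,(n-1)!\big)$. Writing
\[ C(n-2) = \frac{1}{n-1}\binom{2(n-2)}{n-2} = \frac{1}{n-1}\cdot\frac{(2n-4)!}{(n-2)!\,(n-2)!}, \]
the factor $(2n-4)!$ cancels and $(n-1)(n-2)! = (n-1)!$, giving $\dfrac{1}{(n-1)!\,(n-2)!}$, as claimed.

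I do not expect any real obstacle here: the only nontrivial ingredient is the count of standard Young tableaux on a two-row rectangular shape by Catalan numbers, which is classical and is already quoted in the statement; everything else is just tracking the data $(\lambda, \tilde{\lambda}, e_i)$ through Theorem \ref{thm:apl} and a one-line factorial identity.
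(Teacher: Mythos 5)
Your proposal is correct and follows exactly the paper's route: substitute $\lambda=2^{2(n-2)}$, $j=n-2$ into Theorem \ref{thm:apl}, identify $K_{(n-2,n-2),1^{2(n-2)}}$ as the Catalan number $C(n-2)$ counting standard Young tableaux of the $2\times(n-2)$ rectangle, and simplify. The only difference is that you spell out the trivial factorial simplification, which the paper leaves implicit.
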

Similarly, for $\lambda=3^{n-2}$---as $K_{(n-2)^2,2^{n-2}}=R(n-2)$---we have
\begin{proposition}
 \[ \apl_{3^{n-2}}=\frac{R(n-2)}{(n-2)!}, \]
 where $R(n)$ is the $n$-th Riordan number ($R(3)=1,\ R(4)=3,\ R(5)=6,\ R(6)=15$, see OEIS \url{https://oeis.org/A005043}).
\end{proposition}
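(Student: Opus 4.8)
The plan is to run the same argument as in Proposition \ref{multi-tangent}, with the partition $1^{2(n-2)}$ replaced by $2^{n-2}$. For $\lambda=(3^{n-2})$ we have $m=3$, $e_2=0$, $e_3=n-2$, reduction $\tilde\lambda=(2^{n-2})$ and $|\tilde\lambda|=2(n-2)$; the number $\apl_{3^{n-2}}=\apl_{3^{n-2};0}$ is the case $j=|\tilde\lambda|/2=n-2$, so $|\tilde\lambda|-j=n-2$, and Theorem \ref{thm:apl} reduces the statement to the combinatorial identity $K_{(k,k),(2^k)}=R(k)$ with $k=n-2$. Equivalently: the number of semistandard Young tableaux on the $2\times k$ rectangle in which each of $1,\dots,k$ occurs exactly twice equals the $k$-th Riordan number $R(k)$.

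First I would prove this identity bijectively, using the standard Motzkin-path model for $R(k)$: lattice paths of length $k$ from $0$ to $0$ with steps $+1$, $-1$, $0$, staying weakly above $0$, and with no level step at height $0$. Given a tableau $T$ of shape $(k,k)$ and content $2^k$, read the values $v=1,\dots,k$ and record $+1$ if both copies of $v$ lie in the first row, $-1$ if both lie in the second row, and $0$ if one copy lies in each row. The content $2^k$ and the rectangular shape force the resulting walk to start and end at $0$; column-strictness of $T$ shows it never goes below $0$ (a second-row cell in a column $c$ requires a strictly smaller, hence earlier-placed, first-row cell in column $c$, so the first row never falls behind the second), and the same column-strictness forbids a level step at height $0$ (it would stack two equal entries in one column). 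Conversely, a path of this kind fills the two rows greedily into a unique such tableau, so $K_{(k,k),(2^k)}=R(k)$; dividing by $\prod_{i=2}^{3}(e_i!)=(n-2)!$ then gives the proposition. As an algebraic alternative I could instead use $s_{(k,k)}=h_k^2-h_{k+1}h_{k-1}$ (Jacobi--Trudi): then $K_{(k,k),(2^k)}$ is the coefficient of the monomial $x_1^2\cdots x_k^2$ in $h_k^2-h_{k+1}h_{k-1}$, which equals $[y^k](1+y+y^2)^k-[y^{k+1}](1+y+y^2)^k$, a difference of near-central trinomial coefficients.

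The computations here are all routine; the only genuine point is the final identification with $R(k)$. In the bijective route the care is precisely in checking that the ``no level step at height $0$'' condition corresponds exactly to column-strictness of $T$; in the algebraic route it is recognising $\sum_{k\ge 0}\big([y^k]-[y^{k+1}]\big)(1+y+y^2)^k\,x^k$ as the Riordan generating function $\frac{1+x-\sqrt{1-2x-3x^2}}{2x(1+x)}$, equivalently verifying that $K_{(k,k),(2^k)}$ satisfies the Riordan recursion $(k+1)R(k)=(2k-1)R(k-1)+3(k-1)R(k-2)$. Both are classical, and a quick comparison with $R(3)=1$, $R(4)=3$, $R(5)=6$, $R(6)=15$ fixes the indexing and the normalisation.
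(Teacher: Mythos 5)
Your proposal is correct and follows essentially the same route as the paper: reduce via Theorem \ref{thm:apl} to the Kostka number $K_{(n-2,n-2),(2^{n-2})}$ divided by $(n-2)!$, and then use the identity $K_{(k,k),(2^k)}=R(k)$, which the paper simply cites and you additionally prove by the standard bijection with Motzkin paths having no level step at height $0$ (a welcome addition, and your column-strictness argument is right). One small nit in your optional algebraic variant: the Riordan recursion should read $(k+1)R(k)=(k-1)\bigl(2R(k-1)+3R(k-2)\bigr)$, not $(k+1)R(k)=(2k-1)R(k-1)+3(k-1)R(k-2)$; the trinomial-coefficient identity $K_{(k,k),(2^k)}=[y^k](1+y+y^2)^k-[y^{k+1}](1+y+y^2)^k$ itself is correct.
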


On the other hand, for $\lambda=(n-1,n-1)$---as $K_{(n-2)^2,(n-2)^2}=1$---we have
\begin{proposition}
 \[ \apl_{n-1,n-1}=\frac12.\]
\end{proposition}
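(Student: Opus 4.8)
The plan is to obtain this as an immediate specialization of Theorem \ref{thm:apl}, the only real content being the evaluation of a single Kostka number. First I would record the combinatorial data attached to $\lambda=(n-1,n-1)$: its reduction is $\tilde\lambda=(n-2,n-2)$, so $|\tilde\lambda|=2(n-2)$, and the only nonzero multiplicity is $e_{n-1}=2$, whence $\prod_{i=2}^m(e_i!)=2!=2$. Since $\apl_{n-1,n-1}$ denotes $\apl_{\lambda;0}$, the relevant index in Theorem \ref{thm:apl} is the $j$ solving $|\tilde\lambda|-2j=0$, i.e.\ $j=n-2$; this is admissible because $j=n-2=\lfloor|\tilde\lambda|/2\rfloor$, and then $n=|\tilde\lambda|-j+2$ agrees with the given $n$.

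Next I would invoke Theorem \ref{thm:apl} with this $j$, which gives
\[ \apl_{n-1,n-1}=\frac{K_{(n-2,n-2),\,(n-2,n-2)}}{2}. \]
It then remains only to check that $K_{(n-2)^2,(n-2)^2}=1$. This Kostka number counts semistandard Young tableaux of shape $(n-2,n-2)$ with content $(n-2,n-2)$; strict increase down each column forces every entry of the first row to be $1$ and every entry of the second row to be $2$, and this single filling is weakly increasing along rows, so there is exactly one such tableau. (Equivalently, one cites the standard fact that $K_{\mu\mu}=1$ for every partition $\mu$.) Substituting yields $\apl_{n-1,n-1}=\tfrac12$.

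There is essentially no obstacle here: the statement is a direct consequence of Theorem \ref{thm:apl}, and the required Kostka number is immediate from the definition. As a sanity check, the same value also drops out of Theorem \ref{fundclass-leading}: the top $d$-degree part of $\left[\,\overline Y_{n-1,n-1}(d)\right]$ is $\tfrac12\,h_{(n-2,n-2)}\,d^{2n-2}$, and by Proposition \ref{Y-and-pluecker} the coefficient of $s_{(n-2,n-2)}$ in $\left[\,\overline Y_{n-1,n-1}(d)\right]$ equals $\Pl_{n-1,n-1}(d)$; extracting the coefficient of $s_{(n-2,n-2)}$ from $h_{(n-2,n-2)}$—once more the Kostka number $1$—shows the leading coefficient of $\Pl_{n-1,n-1}(d)$ is $\tfrac12$.
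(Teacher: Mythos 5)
Your proof is correct and is exactly the paper's argument: specialize Theorem \ref{thm:apl} to $\lambda=(n-1,n-1)$, note $K_{(n-2)^2,(n-2)^2}=1$ and $\prod e_i!=2!$, and conclude $\apl_{n-1,n-1}=\tfrac12$. The bookkeeping ($j=n-2$, $n=|\tilde\lambda|-j+2$) and the Kostka evaluation are all handled correctly, so nothing further is needed.
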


\bigskip

\section{Comparison with the classical non-equivariant method} \label{sec:compare}
The goal of this section is to build a bridge between the classical method of solving enumerative problems and the equivariant one.

\subsection{The general setup}
The classical method for computing the cohomology class of a closed subvariety
$Z \subset X$ is to give  a resolution $\varphi: \tilde{Z} \to X$ of $Z$ and compute
$\varphi_! 1$.
For a general $\,\overline{\mathcal{T}_\lambda Z_f} \subset \Gr_2 (\mathbb{C}^n)$,
to find a resolution for which we can calculate this pushforward is difficult.

Instead, we use equivariant cohomology classes that can be computed using equivariant
methods, such as localization and the ABBV integral formula. Then we write $Z \subset X$
as a locus of a sufficiently transversal section $\sigma$ in
\begin{equation*}
\begin{tikzcd}[column sep=0.1in]
P \times_G Y \ar[r,phantom,"\subseteq",description] &A=P \times_G V \ar[d] \\
Z=\sigma^{-1}\left( P \times_G Y \right) \ar[r,phantom,"\subseteq",description] &
X \ar[u, bend right=30,"\sigma" right]
\end{tikzcd}
\end{equation*}
so we can use the universal property of the equivariant class
$\left[ Y \subset V \right]_G$
to compute $\left[ Z \subset X \right]$.

However, if
\begin{equation}\label{eq_genfiberedres}
\begin{tikzcd}
E \ar[rr,bend left=20, "\varphi"] \ar[dr] \ar[r,hook, "j" below] &
M \times V \ar[r,"\pi" below] \ar[d]
& V \\
 & M &
\end{tikzcd}
\end{equation}
is an equivariant fibered resolution of $Y \subset V$,
then (by Lemma \ref{push-from-subbundle}) not only
$ \left[ Y \subset V \right]_G={\pi}_! e_G\left( V/E \right)$ but also
we can avoid using equivariant theory as from
(\ref{eq_genfiberedres}) we can derive a resolution of $Z \subset X$ as follows.
All the maps in diagram (\ref{eq_genfiberedres}) are $G$-equivariant, so we can associate
it to the principal $G$-bundle $P \to X$. Complete the resulting diagram with
the associated bundle $p: \mathbf{M}=P \times_G M \to X$ to get
\begin{equation*}
\begin{tikzcd}
  \mathbf{E}=P \times_G E \ar[r,hook,"\boldsymbol{j}"] \ar[dr] &
\mathbf{V}=P \times_G \left( M \times V \right)=p^* A \ar[r,"\boldsymbol\pi"] \ar[d] &
A=P \times_G V \ar[d] \\
& \mathbf{M}=P \times_G M \ar[r,"p"]
\ar[u, bend right=30,"\bar{\sigma}=p^*\sigma" right] &
X \ar[u, bend right=30,"\sigma" right]
\end{tikzcd}.
\end{equation*}
It follows from the construction that $\bar{\sigma}$ is transversal to
$\mathbf{E} $, hence $p$ restricted to
$\bar{\sigma}^{-1}\left( \mathbf{E} \right)$ gives a
resolution of $Z=\sigma^{-1}\left( P \times_G Y \right)$.

We get that
\[ \left[ Z \subset X \right]=p_! e\left(\mathbf{V}/\mathbf{E}\right). \]
Note that, to keep our formulas shorter,
we use the same notation for a bundle and its injective image.

\subsection{The case of \texorpdfstring{$m$}{m}-flex lines}
Our recursive method provides an equivariant fibered resolution  for
$\,\overline{Y}_\lambda \subset \Pol^{d}(\mathbb{C}^2)$ when $\lambda=(m)$.
In what follows, we work out
the details of the above general method for this case.
For some $m$'s calculations are described in \cite[Ch.~11]{eh3264}.

We recall the construction from Section \ref{sec:variety-of-tangent}: $f\in\Pol^d(\C^n)$ defines a hypersurface $Z_f \subset \P(\C^n)$. It also induces a section
\[\sigma_f:X=\gr_2(\C^n)\to A=\Pol^d(S),\, \sigma_f(W):=f|_W,\]
where $S\to \gr_2(\C^n)$ is the tautological bundle. We identify the variety of tangent lines of type $\lambda$ as
\[ {\mathcal{T}}_\lambda Z_f=\sigma_f^{-1}({Y}_\lambda(d)).\]
Once we have $\left[\,\overline{Y}_\lambda(d)\right]$, we can get $\left[\,\overline{\mathcal{T}_\lambda Z_f}
\subset \gr_2(\C^n)\right]$ by substituting $c_i \mapsto c_i(S^\vee)$ as in Section
\ref{subsec:calccohomclass}.

The covering map, described in Section \ref{subsec:twistingwithlinebundle}, becomes a fibered resolution for $\lambda=(m)$:
\begin{equation}\label{eq:Ymfiberedres}
\begin{tikzcd}
E=\Pol^{d'}(\C^2)\otimes \Pol^m(\C^2/\gamma) \ar[rr,bend left=20, "\varphi"] \ar[dr] \ar[r,hook, "j" below] &
\P(\C^2) \times \Pol^{d}(\mathbb{C}^2) \ar[r,"\pi" below] \ar[d]
& \Pol^{d}(\mathbb{C}^2) \\
 & \P(\C^2) &
\end{tikzcd}.
\end{equation}

Associate diagram (\ref{eq:Ymfiberedres}) to the frame bundle $P=\Inj(\mathbb{C}^2, S) \to X=\gr_2(\C^n)$ to get:
\begin{equation}\label{eq_TmZfresolution}
\begin{tikzcd}
  \mathbf{E}=\Pol^{d'}(S^2) \otimes \Pol^{m}(S^2/S^1) \ar[r,hook,"\boldsymbol{j}"] \ar[dr] &
\mathbf{V}=\Pol^{d}(S^2)=p^*  \Pol^{d}(S)\ar[r,"\boldsymbol\pi"] \ar[d] &
A=\Pol^{d}(S) \ar[d] \\
& \mathbf{M}=\fl_{1,2}\left( \C^n \right) \ar[r,"p"]
\ar[u, bend right=30,"\bar{\sigma}_f=p^*\sigma_f" right] &
\gr_2(\C^n) \ar[u, bend right=30,"\sigma_f" right]
\end{tikzcd},
\end{equation}
where $S^i$ is the tautological bundle of rank $i$ over the flag manifold
 $\fl_{1,2}(\C^n)$ and $d'=d-m$.
(\cite[Ch.~11]{eh3264} uses the notation $\mathbb{G}(1,n-1)$ for the Grassmannian $\gr_2(\C^n)$ and $\Psi$ for the flag manifold $\fl_{1,2}(\C^n)$.)

As we explained in the previous section, if $f$ is generic, then $\left[\,\overline{\mathcal{T}_m Z_f}\subset
\Gr_2(\mathbb{C}^n)\right]=p_!e(\mathbf{V}/\mathbf{E})$, where $p:\fl_{1,2}(\C^n)\to \gr_2(\C^n)$ denotes the
projection. To calculate the pushforward we use
\begin{proposition}\label{projective-pushforward}  \cite[Prop. 10.3]{eh3264}
  Let $D\to X$ be a rank $k$ bundle over a smooth $X$, and $p:\P (D)\to X$ its projectivization. Then all $\alpha\in H^*(\P (D))$ is of the form $\alpha=\sum \beta^i p^*m_i$, where $\beta=c_1(\gamma^\vee\to \P (D))$ with $\gamma \to \P(D)$ the tautological line
bundle of $\P(D)$
and
\[ p_!\alpha=\sum s_{i-k+1} m_i,\]
where $1/c(D)=s(D)=1+s_1+\cdots$ is the Segre class of $D$
(or, equivalently, $s_i=s_i(c_j(D^\vee))$, the Schur polynomial in the Chern classes of the dual bundle
$D^\vee$).
\end{proposition}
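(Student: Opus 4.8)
This is \cite[Prop.~10.3]{eh3264}, and the plan to prove it combines three standard ingredients: the projective bundle theorem, the tautological relation on $\mathbb{P}(D)$, and the recursion characterising Segre classes. First I would invoke the projective bundle theorem (Leray--Hirsch): since $p\colon\mathbb{P}(D)\to X$ is a fiber bundle with fiber $\mathbb{P}^{k-1}$ and $\beta=c_1(\gamma^\vee)$ restricts on every fiber to the hyperplane class $h$ (a generator of $H^*(\mathbb{P}^{k-1})$), the map $(m_i)\mapsto\sum_{i=0}^{k-1}\beta^i\,p^*m_i$ is an isomorphism $\bigoplus_{i=0}^{k-1}H^{*-2i}(X)\to H^*(\mathbb{P}(D))$ of $H^*(X)$-modules. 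In particular every $\alpha\in H^*(\mathbb{P}(D))$ can be written in the asserted form $\sum_i\beta^i p^*m_i$; one need not establish uniqueness, since the pushforward formula below will turn out to be independent of the chosen representation of $\alpha$.

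Next I would record the tautological relation. The tautological exact sequence $0\to\gamma\to p^*D\to Q\to 0$ on $\mathbb{P}(D)$, with $Q$ of rank $k-1$, gives by the Whitney formula $p^*c(D)=c(\gamma)c(Q)=(1-\beta)\,c(Q)$, hence $c(Q)=p^*c(D)\cdot\sum_{j\ge 0}\beta^j$; vanishing of $c_k(Q)$ then yields
\[ \sum_{i=0}^{k}\beta^{k-i}\,p^*c_i(D)=0 \qquad\text{in } H^{2k}(\mathbb{P}(D)). \]

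The main step is the computation of $p_!(\beta^i)$. Since $p$ is proper with fibers of real dimension $2(k-1)$, the Gysin map $p_!$ lowers cohomological degree by $2(k-1)$; therefore $p_!(\beta^i)=0$ for $0\le i\le k-2$ by degree reasons, while $p_!(\beta^{k-1})\in H^0(X)$ equals the constant $\int_{\mathbb{P}^{k-1}}h^{k-1}=1$. For $i\ge k$ I would multiply the tautological relation by $\beta^{i-k}$, apply $p_!$, and use the projection formula to obtain $\sum_{j=0}^{k}c_j(D)\,p_!(\beta^{\,i-j})=0$. Writing $\sigma_m:=p_!(\beta^{\,k-1+m})$, so that $\sigma_m=0$ for $m<0$ and $\sigma_0=1$, this reads
\[ \sum_{j=0}^{k}c_j(D)\,\sigma_{m-j}=0 \qquad (m\ge 1), \]
which is precisely the recursion satisfied by the Segre classes $s_m(D)$ coming from $c(D)\,s(D)=1$; with matching initial data, induction on $m$ gives $\sigma_m=s_m(D)$, i.e. $p_!(\beta^i)=s_{i-k+1}(D)$ for all $i\ge 0$ (with the conventions $s_0=1$ and $s_j=0$ for $j<0$). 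Applying the projection formula once more to $\alpha=\sum_i\beta^i p^*m_i$ yields $p_!\alpha=\sum_i p_!(\beta^i)\,m_i=\sum_i s_{i-k+1}(D)\,m_i$, as claimed, and this also shows the value is independent of the chosen expansion of $\alpha$.

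The only genuinely non-formal input is the projective bundle theorem itself, which I would simply cite; the rest is bookkeeping. The place where care is needed is the sign and duality conventions: one must use $\beta=c_1(\gamma^\vee)$, the hyperplane class, rather than $c_1(\gamma)$, so that the relevant Segre class is $s(D)=1/c(D)$, which agrees with the one-row Schur polynomials $s_i\big(c_j(D^\vee)\big)$ in the Chern classes of the dual bundle, consistently with the conventions fixed in Section~\ref{sec:conventions}.
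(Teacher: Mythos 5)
Your proof is correct, and the paper itself offers no argument here -- it simply cites \cite[Prop.~10.3]{eh3264} -- so there is nothing to diverge from: your combination of the projective bundle theorem, the Grothendieck relation $\sum_{i=0}^{k}\beta^{k-i}p^*c_i(D)=0$, and the Segre-class recursion $c(D)s(D)=1$ is exactly the standard argument behind the cited result, including the correct convention $\beta=c_1(\gamma^\vee)$ and $s_i=s_i\big(c_j(D^\vee)\big)$.
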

Namely, recall that $\fl_{1,2}(\C^n)$ is the total space of the projective bundle $p:\P(S)\to \gr_2(\C^n)$ and
$S^1\to \fl_{1,2}(\C^n)$ is the tautological line bundle of $\P(S)$.

To stay close to the notation of \cite[Ch.~11]{eh3264}, we choose generators of
$H^*(\fl_{1,2}(\C^n))$
\[ c_1((S^2)^\vee)=\sigma_1,\   \    c_1((S^1)^\vee)=\zeta:\]
 $S^1$ is a subbundle of $S^2$ so $S^2/S^1$ is also a line bundle over $\fl_{1,2}(\C^n)$ with $c_1((S^2/S^1)^\vee)=\eta=\sigma_1-\zeta$.
Using these generators, we can apply Proposition \ref{projective-pushforward} and
calculate the pushforward $p_!$ as
\begin{equation*}\label{p!}
  p_!(\zeta^a\sigma_1^b)=s_{a-1}\sigma_1^b, \text{ where } s=1+\sigma_1+\sigma_1^2-
\sigma_2+\dots
\end{equation*}
is the Segre class of $S$.
Note that we don't distinguish between cohomology classes and their pullbacks, and hence
in the remainder of this Section we denote by $\sigma_i$ the $i$-th Chern class of $S^\vee \to \gr_2 \C^n$.
Be aware that in the earlier Sections we used $c_i$ for $c_i(S^\vee)$.

 The Chern classes of the \quot{bold} bundles can be obtained by substituting the corresponding Chern roots:
\begin{equation*}\label{cVV}
  c(\mathbf{V})=\prod_{i=0}^{d} \big( 1+(d-i)\zeta+i\eta \big) =\prod_{i=0}^{d} \big( 1+(d-2i)\zeta+i\sigma_1\big).
\end{equation*}
Similarly,
\begin{equation*}\label{cEE}
  c(\mathbf{E})=\prod_{i=0}^{d-m} \big( 1+m(\sigma_1-\zeta)+(d-m-i)\zeta+i(\sigma_1-\zeta)\big),
\end{equation*}
implying that
\begin{equation*}\label{cV/E}
  c(\mathbf{V}/\mathbf{E})=\prod_{i=0}^{m-1} \big( 1+(d-i)\zeta+i \eta \big) =\prod_{i=0}^{m-1} \big(
  1+(d-2i)\zeta+i\sigma_1 \big)
\end{equation*}
and
\begin{equation*}\label{eV/E}
  e(\mathbf{V}/\mathbf{E})=c_m(\mathbf{V}/\mathbf{E})=\prod_{i=0}^{m-1} \big( (d-i)\zeta+i \eta \big) =
  \prod_{i=0}^{m-1} \big( (d-2i)\zeta+i\sigma_1 \big).
\end{equation*}

The case $m=2$,
\begin{equation*}
  p_!c_2(\mathbf{V}/\mathbf{E})=p_!\big(d\zeta((d-2)\zeta+\sigma_1)\big)=d(d-1)\sigma_1
=d(d-1)s_1
\end{equation*}
is equivalent to the Pl\"ucker formula $\Pl_{2;1}(d)$ of Example \ref{pl2;1}.
\bigskip

The case $m=3$ is
\begin{equation*}
 \begin{aligned}
  p_!c_3(\mathbf{V}/\mathbf{E})= &\, p_!\big(d\zeta((d-2)\zeta+\sigma_1)((d-4)\zeta+2\sigma_1)\big) \\
= &\, p_!\big(2d\zeta\sigma_1^2+d(3d-8)\zeta^2\sigma_1 + d(d-2)(d-4)\zeta^3 \big) \\
=&
d(d-1)(d-2)\sigma_1^2-d(d-2)(d-4)\sigma_2
=d(d-1)(d-2)s_2 + 3d(d-2)s_{1,1},
 \end{aligned}
\end{equation*}
where the coefficients are the  Pl\"ucker numbers $\Pl_{3;2}(d)$ and $\Pl_3(d)$ of Example \ref{pl22}.
\bigskip

For a specific $n$ calculations can be simplified by the observation that $\zeta^n=0$ since $S^1$ is the pullback of the tautological bundle of $\P(\C^n)$.

\begin{remark}
  A different formula can be given for the pushforward map in the special case of $\P^1$-bundles
  $p:\mathbb{P}(D) \to X$ like $\fl_{1,2}(\C^n)\to \gr_2(\C^n)$.
  Let $\zeta,\eta$ denote the Chern roots of the rank two bundle $D^\vee \to X$:
  $\zeta:=c_1(\gamma^\vee\to \P(D))$  and  $\eta:=c_1((p^*D/\gamma)^\vee\to \P(D))$.
  Then for any polynomial $q(\zeta,\eta) \in H^*(\mathbb{P}(D))$ its pushforward along $p:\P(D)\to X$ is given by
  \[ p_!q(\zeta,\eta)=\partial q.\]
  This is easy to prove directly, but also follows from the equivariant pushforward formula \eqref{equiv-integral}. The calculations above become simpler if we use the variables $\zeta,\eta$ and this pushforward formula.
\end{remark}

\subsection{Incidence varieties}
It is instructive to describe
the section $\bar{\sigma}_f$ and the subvariety $\bar{\sigma}_f^{-1}(\mathbf{E}) \subset \fl_{1,2}(\C^n)$
of \eqref{eq_TmZfresolution}.
\[ \bar{\sigma}_f\left( (W,L) \right)=f|_W \in \mathbf{V}_{(W,L)}=\Pol^{d}(W), \]
hence
\[ \bar{\sigma}_f\left( (W,L) \right) \in \mathbf{E}_{(W,L)}=
 \Pol^{d'}(W) \otimes \Pol^{m}(W/L) \]
is equivalent to having a basis
$x,y$ of $W^\vee$ such that $x(L)=0$ and $f|_W=x^mp(x,y)$ for some polynomial $p$ of degree $d'=d-m$.
Therefore
\[ I_m:=\bar{\sigma}_f^{-1}(\mathbf{E})=\{(W,L): [W] \text{ and } Z_f \text{ has a point of contact of order at least $m$ at } [L] \} ,\]
which is the usual resolution of $\,\overline{\mathcal{T}_{m} Z_f}$.
We will call it the \emph{incidence variety}.
\bigskip

Notice that the bundle $\mathcal{E}$ used in \cite[Ch.~11]{eh3264} looks different than our $\mathbf{V}/\mathbf{E}$. They have the same Chern classes, so the calculations are the same. They are probably also isomorphic.

\begin{remark}\label{incidence} The construction of the incidence variety can be generalized.
Let $m$ be an element of $\lambda$, not necessarily equal to $\lambda_1$. Denote
by $\lambda'$ the partition $\lambda$ minus $m$. Similarly to what we had for the
covering map constructed in Section \ref{subsec:twistingwithlinebundle}, $\mathbf{E}$ has a subbundle
\[ Y_{\lambda'}\left( \mathbf{E} \right)=Y_{\lambda'}\left(
 \Pol^{d'}(S^2) \otimes \Pol^{m}(S^2/S^1)\right) \]
corresponding to the invariant subvariety $Y_{\lambda'}(d') \subset
\Pol^{d'}(\mathbb{C}^2)$.

Choose a generic $f\in \pol^{d}(\C^n)$. It induces a section
$\bar\sigma_f: \fl_{1,2}(\C^n)\to \mathbf{V}=\pol^{d}(S^2)$. Then
 $\bar\sigma^{-1}_f\big(\,\overline Y_{\lambda'}(\mathbf{E})\big)$ can be identified with the \emph{incidence variety $I_{\lambda';m}$ of $m$-flex points and $\lambda$-lines for $f$}.
Therefore, by Lemma \ref{push-from-subbundle} and Corollary \ref{twist4bundle},
\[ [I_{\lambda';m}\subset \fl_{1,2}]=e(\mathbf{V}/\mathbf{E})\cdot \left[\,\overline Y_{\lambda'}(\mathbf{E})\subset \mathbf{E}\right]\]
and
\[ \left[\,\overline Y_{\lambda'}(\mathbf{E})\subset \mathbf{E}\right]=\left[\,\overline Y_{\lambda'}(d')\right]\big(\eta+\frac{m}{d'}\eta,\zeta+\frac{m}{d'}\eta\big),\]
where we substitute into the equivariant class
$\left[ \,\overline{Y}_{\lambda'}(d') \right]$ expressed in Chern roots $a,b$,
see Section \ref{sec:conventions}.

Since $p|_{I_{\lambda';m}}$ is an $e_m$-to-1 branched covering of $\,\overline{\mathcal{T}_{\lambda} Z_f}$, we can calculate its cohomology class:
\begin{equation*}\label{nonequi-recursion}
  \left[\,\overline{\mathcal{T}_{\lambda} Z_f} \subset \Gr_2(\mathbb{C}^n)\right]=\frac{1}{e_m}p_!\Big(e(\mathbf{V}/\mathbf{E})\cdot \left[\,\overline Y_{\lambda'}(\mathbf{E})\subset \mathbf{E}\right]\Big).
\end{equation*}
This is the non-equivariant version of the proof of Theorem \ref{recursion4Y}. Notice that $I_{\lambda';m}$ is not smooth in general.
\end{remark}

\begin{example}
For $\lambda=\left( 2,2 \right)$ the class of the incidence variety $I_{2;2}$
of bitangent points and bitangents is
\begin{equation*}
 \begin{aligned}
\left[ I_{2;2} \subset \fl_{1,2}(\C^n) \right]=& e\left( \mathbf{V}/\mathbf{E} \right)
\left[ \,\overline{Y}_{2}\left( \mathbf{E} \right) \subset \mathbf{E} \right]\\=&
\prod_{i=0}^{1} \big( (d-i)\zeta+i \eta \big)  \cdot \left[ \,\overline{Y}_2 (d-2)
 \right]\!\left( \eta+ \frac{2}{d-2}\eta, \zeta+ \frac{2}{d-2}\eta \right)\\
 =&d(d-3)(d+2)\zeta \sigma_1^2+d(d-3)(d^2-8)\zeta^2 \sigma_1-4d(d-2)(d-3)\zeta^3.
\end{aligned}
\end{equation*}
As $1/2$-times its pushforward along $p$ we get
\begin{equation*}
\begin{aligned}
\left[ \,\overline{\mathcal{T}_{2,2}Z_f}\subset \gr_2(\C^n) \right]=&\frac{1}{2} p_!
\left[ I_{2;2} \subset \fl_{1,2}  \right]= \frac{1}{2}
d(d-1)(d-2)(d-3)\sigma_1^2+2d(d-2)(d-3)\sigma_2\\=&\frac{1}{2}d(d-1)(d-2)(d-3)s_2
+\frac{1}{2}d(d-2)(d-3)(d+3)s_{1,1},
\end{aligned}
\end{equation*}
agreeing with \eqref{Y22}.
\end{example}
\section{Further enumerative problems} \label{sec:further}
\subsection{The universal hypersurface and Pl\"ucker numbers for linear systems} \label{sec:correspondence}
A more general construction considers all hypersurfaces $Z_f \subset \P(\C^n)$ at once. Consider the vector bundle
\begin{equation*}
\begin{tikzcd}
A_u:=\Hom\big(L,\pol^d(S)\big) \ar[r] &\P\big(\Pol^d(\C^n)\big)\times \gr_2(\C^n),
\end{tikzcd}
\end{equation*}
where $L$ and $S$ are the tautological bundles over $\P\big(\Pol^d(\C^n)\big)$
and $\gr_2\left( \C^n \right)$.

$A_u$ has a section
\[\sigma([f],W)(f): f \mapsto f|_W,\]
the \emph{universal section}.
Applying the construction in Section \ref{subsec:twistingwithlinebundle} to
$A_u \cong \Pol^{d}(S) \otimes L^\vee$ and  $\GL(2)$-invariant subsets
$Y_{\lambda}(d) \subset \Pol^{d}(\mathbb{C}^2)$, we get subbundles $Y_{\lambda}(A_u)$.
The universal section is transversal to the subvarieties $\,\overline{Y}_\lambda(A_u)$ (see Example \ref{ex:transversality_univsection}).
The cohomology classes $\left[ \sigma^{-1}(\,\overline{Y}_\lambda(A_u)) \right]$
are sources of answers for new enumerative problems. They can be calculated
using Corollary \ref{twist4bundle} from the equivariant classes
$\left[ \,\overline{Y}_\lambda (d) \right]$ expressed in Chern classes $c_1,c_2$ (see Section
\ref{sec:conventions}) by substituting
\[c_1\mapsto c_1 +\frac2d\xi,\ c_2\mapsto c_2+\frac1d\xi c_1+\frac 1{d^2}\xi^2,\]
where on the right-hand side of these substitutions $c_i$ and $\xi$ denote
the Chern classes of the duals of $S$ and $L$ respectively.

For example,
\[ \left[\sigma^{-1}(\,\overline{Y}_2\left( A_u \right))\right]=d(d-1)c_1+2(d-1)\xi.\]
Therefore $2d-2$, the coefficient of $\xi$, is the number of degree $d$ curves in a pencil tangent to a given line.

Similarly,
\begin{equation*}\label{y3twisted}
\begin{split}
  [\sigma^{-1}(\,\overline{Y}_3(A_u))] =&  d(d-1)(d-2)c_1^2-d(d-2)(d-4)c_2+3d(d-2)\xi c_1+3(d-2)\xi^2  \\
                           =&  d(d-2)(d-1)s_2+3d(d-2)s_{1,1}+3d(d-2)\xi s_1+3(d-2)\xi^2.
\end{split}
\end{equation*}
The coefficient of $\xi^2$---the degree of the variety $\mathbb{P}\big(\overline{Y}_3(d)\big)$
(see \cite[Cor.~6.4]{fnr-forms})---was already calculated by Hilbert (for all $\,\overline{Y}_\lambda(d)$).
 The only new information is the coefficient of $\xi s_1$,
which for $n=3$ is the number of lines
that go through a point and are flex lines to a member of a pencil of degree $d$ curves.
In other words, $3d(d-2)$ is the degree of the curve in the projective plane of lines that consists of those lines
that are flexes to a member of a given generic pencil.
\subsection{\texorpdfstring{$m$}{m}-flex points of \texorpdfstring{$\lambda$}{lambda}-lines} \label{sec:mflexes-of-lambdalines}
The flag manifold  $\fl_{1,2}(\C^n)$ possesses another fibration $q:\fl_{1,2}(\C^n)\to \P(\C^n)$. We call the $q$-image of the incidence variety $I_{\lambda';m}$ of Remark \ref{incidence} the \emph{ variety of $m$-flex points of $\lambda$-lines}. Since $q:I_{\lambda';m}\to q(I_{\lambda';m})$ is generically one-to-one, we can calculate its cohomology class by pushing forward $[I_{\lambda';m}]$ along $q$.
\begin{example} Let $\lambda=(3,2)$.
Remark \ref{incidence} with $m=3$ and $\lambda'=(2)$ gives that for the incidence
variety $I_{2;3}$ of flex points and $(3,2)$-lines
\begin{equation*}
\begin{aligned}
 \left[ I_{2;3} \subset \fl_{1,2}(\C^n) \right]=&
e\left( \mathbf{V}/\mathbf{E} \right) \left[ \,\overline{Y}_2(\mathbf{E}) \subset
\mathbf{E} \right]\\
=&d\zeta((d-1)\zeta+\eta)((d-2)\zeta+2\eta)\cdot \left[ \,\overline{Y}_2(d-3) \right]\!
\left( \eta+\frac{3}{d-3} \eta, \zeta+\frac{3}{d-3}\eta \right)\\
=&d\zeta((d-1)\zeta+\eta)((d-2)\zeta+2\eta)\cdot(d-3)(d-4)\left( \eta+\frac{3}{d-3} \eta+ \zeta+\frac{3}{d-3}\eta \right).
\end{aligned}
\end{equation*}

The fibration $q:\fl_{1,2}(\C^n) \to \P(\C^n)$ is isomorphic to the projective bundle $\P(\C^n/\gamma) \to \P(\C^n)$, where $\gamma \to \P(\C^n) $ denotes the tautological
bundle. Because of this description, we can use Proposition \ref{projective-pushforward} to calculate the pushforward $q_!$:
  \begin{equation}\label{eq_q!} q_!\left(  \eta^a \zeta^b \right)=s_{a-n+2}\zeta^b, \text{ where } s=1-\zeta \end{equation}
is the Segre class of $C^n/\gamma$ and  $\zeta=c_1(\gamma^\vee \to \P(\C^n))$.

Notice that  $\gamma^\vee \to \P(\C^n/\gamma)$ corresponds to $(S^2/S^1)^\vee \to \fl_{1,2}(\C^n)$, hence the use of $\eta$ in
(\ref{eq_q!}) is consistent with our earlier choice of generators of $H^*(\fl_{1,2}(\C^n))$.

Contrary to the fibration $p: \fl_{1,2}(\C^n) \to \gr_2(\C^n)$, the relative codimension of $q$ depends on $n$. For $n=4$ the nonzero pushfowards are $q_!(\eta^2)=1$ and
$q_!(\eta^3)=-\zeta$, hence for a generic degree $d$ surface, the class of the curve consisting of the 3-flex points of the $(3,2)$-lines is
\[ q_!\left[ I_{2;3} \subset \fl_{1,2}(\C^4) \right]=\left[ q(I_{2;3})\subset \P(\C^4)
 \right]=d(d-4)(3d^2+5d-24) \zeta^2 .\]

Similar calculation shows that for a generic degree $d$ surface the cohomology class of the curve consisting of the 2-tangent points of the $(3,2)$-lines is
  \[ \left[ q\left( I_{3;2} \right) \subset \P(\C^4) \right]=d(d-2)(d-4)(d^2+2d+12)\zeta^2.    \]

\end{example}
\begin{example} The degree of the curve of 4-flex points on a surface is calculated in \cite[p.~399]{eh3264}. The calculation is essentially the same, so we don't repeat it here.
\end{example}
\subsection{\texorpdfstring{$m$}{m}-flex points of \texorpdfstring{$\lambda$}{lambda}-lines for a linear system} \label{sec:combined}
The previous two constructions can be combined without difficulty.

Let $m$ be an element of  $\lambda$, not necessarily equal to $\lambda_1$,
 and consider the vector bundle $\mathbf{V}_u=\Hom(L,\mathbf{V})\to\P \big(\pol^d(\C^n)\big)\times\fl_{1,2}
 (\C^n)$ and its subbundle
\[\mathbf{E}_u=\Hom\left(L,\mathbf{E}\right)=\Hom\big(L,\pol^{d'}(S^2) \otimes \pol^m(S^2/S^1)\big)\to \P\left( \Pol^{d}(\mathbb{C}^n) \right)\times \fl_{1,2}\left( \C^n \right),  \]
where $L$ and $S^i$ are the tautological bundles over $\P(\Pol^{d}(\mathbb{C}^n))$
and $\fl_{1,2}(\C^n)$.

$\mathbf{V}_u$ has a section
\[ \,\overline{\sigma}\left(\left[ f \right],\left( W,L \right)\right): f \mapsto f|_W .\]
Applying the construction in Section \ref{subsec:twistingwithlinebundle} to
$\mathbf{E}_u \cong \Pol^{d'}(S^2) \otimes\Pol^{m}(S^2/S^1) \otimes  L^\vee$ and
$\GL(2)$-invariant subsets $Y_{\lambda'}(d') \subset \Pol^{d'}(\mathbb{C}^2)$, we get subbundles $Y_{\lambda'}(\mathbf{E}_u)$.
The section $\,\overline{\sigma}$ is transversal to these subbundles, hence the class of
their pullbacks $\,\overline{\sigma}^{-1}(\,\overline{Y}_{\lambda'}(\mathbf{E}_u))$
can be calculated using Lemma \ref{push-from-subbundle} and Corollary \ref{twist4bundle} as
\begin{multline*}
\left[\,\overline{\sigma}^{-1}\left( \,\overline{Y}_{\lambda'} \left(\mathbf{E}_u
\right) \right) \subset \P(\Pol^{d}(\mathbb{C}^n)) \times \fl_{1,2}(\C^n) \right]=
 \left[ \,\overline{Y}_{\lambda'}\left (\mathbf{E_u}\right)  \subset \mathbf{V}_u
\right]=\\
e\left( \Hom\left( L, \mathbf{V}/\mathbf{E} \right)  \right)\cdot
\left[ \,\overline{Y}_{\lambda'}(d') \right]\! \left(\eta+\frac{1}{d'}(\xi+m\eta),\zeta+\frac{1}{d'}(\xi+m\eta)\right),
\end{multline*}
where we substitute into the equivariant class
$\left[ \,\overline{Y}_{\lambda'}(d') \right]$ expressed in Chern roots $a,b$,
see Section \ref{sec:conventions}.

The variety $\overline{\sigma}^{-1}\big( \,\overline{Y}_{\lambda'} (\mathbf{E}_u ) \big)$
 can be identified with the \emph{universal incidence variety
of $m$-flex points and $\lambda$-lines} for degree $d$ hypersurfaces in $\P(\C^n)$.

Since the composition
\begin{equation*}
\begin{tikzcd}[column sep=normal]
\P(\Pol^{d}(\mathbb{C}^n)) \times \fl_{1,2}\left( \C^n \right) \ar[r,"\pi_{\fl}"] &
\fl_{1,2}(\C^n) \ar[r,"q"] & \P(\C^n)
\end{tikzcd}
\end{equation*}
restricted to
$\overline{\sigma}^{-1}\left(\, \overline{Y}_{\lambda'} \left(\mathbf{E}_u \right) \right)$
is generically one-to-one, the class of its image is $q_! {\pi_{\fl}}_! \left[
 \,\overline{\sigma}^{-1}\left(\, \overline{Y}_{\lambda'} \left(\mathbf{E}_u \right) \right)
\right]$. This class
provides solutions to further enumerative problems about $m$-tangent points of $\lambda$
-lines in linear systems of hypersurfaces.

\begin{example}
To calculate the degree of the curve consisting of tangent points of bitangent lines in
a pencil of degree $d$ curves, set $\lambda=(2,2)$, $m=2$, $\lambda'=(2)$ and $n=3$.
Then the class of the universal incidence variety is
\begin{multline}\label{eq_22universal}
\left[ \,\overline{\sigma}^{-1}\left(\,\overline{Y}_2(\mathbf{E}_u)\right) \subset
\P(\Pol^{d}(\mathbb{C}^3)) \times \fl_{1,2}(\C^3) \right]=
e\big(\Hom(L,\mathbf{V}/\mathbf{E})\big)\cdot \left[\,\overline Y_{2}(\mathbf{E}_u)\subset \mathbf{E}_u\right]=\\
(d\zeta+\xi)(\eta+(d-1)\zeta+\xi) \cdot (d-3)\big((d-2)(\eta+\zeta)+2\xi+4\eta\big).
\end{multline}
Restricting our attention to a generic pencil of degree $d$ plane curves amounts to
multiplying (\ref{eq_22universal}) by $\xi^{N-1}$ where $N=\dim(\P(\Pol^{d}(\mathbb{C}^3)))$,
while the pushforward along $\pi_{\fl}$ gives the coefficient of the volume form $\xi^N$. Therefore
\[ \left[ \pi_{\fl}\left(\,\overline{\sigma}^{-1}\left(\,\overline{Y}_2(\mathbf{E}_u)\right)
\right)\! \subset \fl_{1,2}(\C^3) \right]=
(d+2)(d-3)\eta^2+2(d-3)(d^2+3d-2)\eta \zeta+(d-3)(4d^2-7d+2)\zeta^2. \]
Finally, Proposition
\ref{projective-pushforward} calculates the pushforward  $q_!$ as in Section \ref{sec:mflexes-of-lambdalines}, and we get that the degree of the curve of tangent points of bitangent
lines is
\[(d-3)(2d^2+5d-6), \]
in particular, for quintics the degree is 46.

\end{example}

\begin{example}
   The degree of the curve of flex points in a pencil of degree $d$ curves is calculated e.g. in \cite[p. 407]{eh3264}.
\end{example}

\appendix
\section{Transversality} \label{sec:trans}
\subsection{Transversality of a generic section}
\begin{proposition}\label{alg-bertini}(\textbf{Bertini for globally generated bundles}: \cite[Prop. 6.4]{matroid})
    Let $E\to X$ be an algebraic vector bundle, $B$ a vector space and $\varphi:B\to \Gamma(E)$ is a linear family of algebraic sections. Suppose that $E$ is generated by the sections $\varphi(b), b\in B$, i.e. $\Phi(b,x):=\varphi(b)(x):B\times X\to E$ is surjective. Let $Y$ be a subvariety of the total space $E$. Then there is an open subset $U$ of $B$   such that for all $b\in U$ the section $\varphi(b)$ is transversal to $Y$.
\end{proposition}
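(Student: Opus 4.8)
The plan is to run the classical parametric transversality argument (algebraic Bertini, in the spirit of Kleiman), the point being that the global generation hypothesis is exactly what makes the total evaluation map a submersion. First I would set $\Phi\colon B\times X\to E$, $\Phi(b,x):=\varphi(b)(x)$, and check that $\Phi$ is a submersion: fixing $x$, the map $b\mapsto\varphi(b)(x)$ is linear, hence equals its own differential, and by the hypothesis that $\Phi$ is surjective this linear map is onto the fibre $E_x$, which is the vertical tangent space of $E$ at $\Phi(b,x)$; since $\Phi(b,-)$ is a section of $E\to X$, the horizontal directions are hit as well, so $d\Phi_{(b,x)}$ is surjective at every point. (We may assume $B$ is finite dimensional, as $E$ is generated by finitely many of the sections $\varphi(b)$; likewise we pass to a Whitney stratification $Y=\bigcup_i Y_i$ and may assume $Y$ itself is smooth, treating the general case stratum by stratum at the end.)

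Since a submersion is transversal to any subvariety, $W:=\Phi^{-1}(Y)\subset B\times X$ is smooth of codimension $\operatorname{codim}(Y\subset E)$. Next I would apply generic smoothness in characteristic $0$ to the projection $p\colon W\to B$: there is a dense open $U\subset B$ over which $p$ is a smooth morphism (if $p$ is not dominant one takes $U$ to be the complement of the closure of its image, over which the fibres are empty; if $W$ is reducible one intersects the opens coming from its components), and then $dp_{(b,x)}$ is surjective for every $(b,x)\in W$ with $b\in U$.

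The core of the argument is to identify, at a point $(b,x)\in W$ with $y:=\varphi(b)(x)\in Y$, the cokernel of $dp_{(b,x)}\colon T_{(b,x)}W\to B$ with the transversality defect $T_yE/\big(T_yY+\operatorname{im} d\varphi(b)_x\big)$. Writing $d\Phi_{(b,x)}(v,w)=\alpha(v)+\beta(w)$, where $\alpha\colon B\to E_x$ is surjective (global generation once more) and $\beta:=d\varphi(b)_x$ is a splitting of the surjection $T_yE\to T_xX$, one has $T_{(b,x)}W=d\Phi_{(b,x)}^{-1}(T_yY)$, hence $\operatorname{im}(dp_{(b,x)})=\alpha^{-1}\big(T_yY+\operatorname{im}\beta\big)$; since $E_x+\operatorname{im}\beta=T_yE$, passing to quotients gives the stated identification. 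Consequently, for $b\in U$ the surjectivity of $dp_{(b,x)}$ at every point of $W$ over $b$ forces this cokernel to vanish, i.e.\ $\varphi(b)$ is transversal to $Y$. Running this over the finitely many strata of $Y$ and intersecting the resulting dense opens produces the desired $U$.

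I expect the only non-formal step to be the cokernel computation of the third paragraph, which is precisely where the hypothesis is spent: it is used both to make $\Phi$ a submersion (so that $W$ is smooth and its fibre dimensions behave) and to make $\alpha$ surjective (so that the naive image of $dp$ matches the transversality defect). The remaining ingredients — the reduction to finite dimensional $B$ and to smooth $Y$, and the appeal to generic smoothness — are routine and use only that we work over $\mathbb{C}$.
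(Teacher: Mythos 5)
Your argument is correct: it is the standard parametric-transversality (Kleiman--Bertini) proof, with global generation making $\Phi$ a submersion, generic smoothness of $p\colon \Phi^{-1}(Y)\to B$ in characteristic $0$ supplying the dense open $U$, and the cokernel identification correctly converting surjectivity of $dp$ over $U$ into transversality of $\varphi(b)$, handled stratum by stratum when $Y$ is singular (which matches the paper's convention of transversality to a stratification). Note that the paper gives no proof of this proposition---it is quoted from \cite[Prop.~6.4]{matroid}---so there is no in-paper argument to compare against; yours is the expected one.
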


\begin{example}
For our family of algebraic sections
\[ \varphi: \Pol^{d}(\mathbb{C}^n) \to \Gamma(\Pol^{d}(S)),\, f \mapsto \sigma_f \]
$\Pol^{d}(S)$ is clearly generated by the sections $\sigma_f$, hence for a generic $f \in
\Pol^d(\mathbb{C}^n)$  $\sigma_f$ is transversal to any subvariety $\,\overline Y_\lambda(\Pol^d(S))$
from Section \ref{sec:variety-of-tangent}.
\end{example}

\subsection{Transversality of the universal section} \label{sec:unitrans} The following is a modification of the idea of \cite[Prop. 8.11]{guang}.
We show that, with the proper definitions, universal sections are transversal.

\begin{definition}
  Let $V$ be a $G$-vector space and assume that $j:P\hookrightarrow V$ is an open $G$-invariant subset such that $\pi:P\to P/G$ is a principal $G$-bundle over the smooth $M:=P/G$. Let $W$ be another $G$-vector space and let $\vartheta:V\to W$ be a $G$-equivariant linear map. Then $\vartheta\circ j:P\to W$ is $G$-equivariant, therefore
\[ \sigma_\vartheta: M \to P \times_G W,\, [p] \mapsto \left[ p, \vartheta(p) \right] \]
 determines a section of the associated bundle $P\times_G W$. We call $\sigma_\vartheta$ the \emph{universal section of $\vartheta$}.
\end{definition}
\begin{proposition} \label{universal-vartheta=>transversal} If $\vartheta:V\to W$ is surjective, then the universal section $\sigma_\vartheta$ is transversal to $P\times_G Z$ for any $G$-invariant constructible subset $Z\subset W$.
\end{proposition}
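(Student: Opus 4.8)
The plan is to peel off, one reduction at a time, until the statement collapses to the triviality that a surjective linear map is a submersion.

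First I would reduce to the case in which $Z$ is smooth. By definition, transversality of a map to a constructible set $Z$ means transversality to each stratum of a stratification of $Z$ into smooth locally closed subsets; since $Z$ is $G$-invariant, such a stratification may be taken $G$-invariant (any $G$-variety admits a $G$-equivariant smooth stratification), and $P\times_G Z$ is then stratified by the pieces $P\times_G Z_i$. Hence it suffices to prove the statement when $Z\subset W$ is a smooth $G$-invariant locally closed submanifold. In that case, because $\pi:P\to M$ is a principal $G$-bundle, $P\times Z\to P\times_G Z$ is again a principal $G$-bundle, so $P\times_G Z$ is a smooth locally closed submanifold of $P\times_G W$, and the claim becomes an honest transversality statement between the section $\sigma_\vartheta$ and a submanifold.

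Next I would pull the whole picture back along the surjective submersion $\pi:P\to M$. For a section $s$ of a bundle $E\to M$ and a submanifold $F\subset E$, one has $s$ transversal to $F$ if and only if $\pi^{*}s$ is transversal to $\pi^{*}F$ as a section of $\pi^{*}E\to P$: this is the standard fact that preimages under a surjective submersion both preserve and reflect transversality, applied to the submersion $\pi$ and the (also submersive) bundle projections, and is verified by a pointwise tangent-space chase using that $d\pi$ is onto. Now $\pi^{*}(P\times_G W)$ is canonically the trivial bundle $P\times W\to P$; under this identification $\pi^{*}(P\times_G Z)=P\times Z$, and the pulled-back section $\pi^{*}\sigma_\vartheta$ is $p\mapsto (p,\vartheta(j(p)))$, i.e. the graph of the map $h:=\vartheta\circ j\colon P\to W$.

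Then I would use that the graph of a smooth map $h:P\to W$, seen as a section of $P\times W\to P$, is transversal to $P\times Z$ exactly when $h$ is transversal to $Z$: at a point $p$ with $h(p)\in Z$ one has $T_{(p,h(p))}(P\times W)=T_pP\oplus W$, the image of the differential of the section is $\{(v,\,dh_p v):v\in T_pP\}$, and $T_{(p,h(p))}(P\times Z)=T_pP\oplus T_{h(p)}Z$; adding these gives $T_pP\oplus\big(dh_p(T_pP)+T_{h(p)}Z\big)$, which is everything precisely when $dh_p(T_pP)+T_{h(p)}Z=W$. Finally, $h=\vartheta\circ j$ is the restriction of the linear map $\vartheta$ to the open subset $P\subset V$, so under the identification $T_pP\cong V$ its differential at every $p$ equals $\vartheta$, which is surjective by hypothesis; hence $h$ is a submersion and is transversal to every subset of $W$, in particular to $Z$. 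Reading the three reductions backwards yields the proposition. None of the steps is difficult; the only two points that genuinely deserve a careful sentence are the $G$-invariance of the stratification in the first reduction and the invariance of transversality under pullback along the surjective submersion $\pi$ in the second, both of which are standard.
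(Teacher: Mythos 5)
Your proof is correct, but it takes a genuinely different route from the paper's. You pull the whole situation back along the bundle projection $\pi\colon P\to M$: there the associated bundle trivializes, $\pi^*(P\times_G W)\cong P\times W$, the invariant subset becomes $\pi^*(P\times_G Z)=P\times Z$, and the pulled-back section is the graph of $\vartheta\circ j\colon P\to W$, which is a submersion because $\vartheta$ is linear and surjective; the work is then carried by the standard lemma that transversality is preserved and reflected under pullback along a surjective submersion, together with the preliminary reduction of a constructible invariant $Z$ to smooth invariant strata. The paper argues instead locally: it chooses a local slice $\varphi\colon U\to P$, observes that in the induced trivialization $\sigma_\vartheta$ is the graph of $\vartheta\circ\varphi\colon U\to W$, and reduces to showing that $\vartheta\circ\varphi$ is transversal to all $G$-orbits in $W$ (which suffices, since an invariant stratum contains the orbit, hence its tangent space, through each of its points). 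Note that $\vartheta\circ\varphi$ is not a submersion in general, as $\dim U=\dim M$ may be smaller than $\dim W$, so the paper's argument genuinely needs the interplay between the slice's transversality to the $G$-orbits in $P$ and the equivariance of the surjective linear map $\vartheta$; in your version the missing orbit directions are supplied by passing to $P$ itself, after which only surjectivity of $\vartheta$ matters and equivariance enters only through the well-definedness of $\sigma_\vartheta$ and the identification $\pi^*(P\times_G Z)=P\times Z$. Both arguments are complete; yours is more global and rests on a general pullback lemma, while the paper's is shorter pointwise and makes the role of the $G$-orbits explicit.
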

\begin{proof}
The question is local, so let $\varphi:U\to P$ be a local slice to $P$, i.e. $U\subset \R^n$ for $n=\dim_\R(M)$ and $\varphi$ is transversal to the fibers of $\pi$. Then $\pi\varphi:U\to M$ is a chart of $M$. In this local trivialization $\sigma_\vartheta$ has a particularly simple form: If $m=\pi\varphi(u)$ for some $u\in U$, then---denoting the section in this trivialization by $\tilde\sigma_\vartheta$---we have
  \[\tilde\sigma_\vartheta(u)=\big(u,\vartheta(\varphi(u))\big).\]
Therefore, it is enough to show that $\vartheta\varphi$ is transversal to all $G$-orbits. For $\varphi$ this holds by the definition of the local slice. Composing with the surjective, $G$-equivariant and linear $\vartheta$ preserves transversality to the $G$-orbits.
\end{proof}

\begin{example}\label{ex:transversality_univsection} In Section \ref{sec:correspondence} we introduced the vector bundle
\[A_u:=\Hom\big(L,\pol^d(S)\big)\ \ \to \ \ \P\big(\Pol^d(\C^n)\big)\times \gr_2(\C^n)\]
and its section
\[\sigma([f],W)(f):=f|_W,\]
where $L$ and $S$ are the tautological bundles over $\P\big(\Pol^d(\C^n)\big)$
and $\gr_2(\C^n)$.

This is a special case of the previous construction with the choices $V=\Pol^d(\C^n)\oplus \Hom(\C^2,\C^n)$, $W=\Pol^d(\C^2)$, $P=\big(\Pol^d(\C^n)\setminus\{0\}\big)\times\Sigma^0(\C^2,\C^n)$,
$G=\GL(1)\times \GL(2)$ and
  \[\vartheta(f,\beta)=f\circ\beta.\]
The surjectivity of $\vartheta$ is clear, so the universal section is transversal to the $\,\overline Y_\lambda(A_u)$'s, therefore our calculation for the cohomology  class of $[\sigma^{-1}(\,\overline{Y}_\lambda(A_u))]$ is valid.
\end{example}

\bibliography{plucker}
\bibliographystyle{alpha}
\end{document}